\documentclass[a4paper]{article}


\usepackage[utf8]{inputenc}

\pdfoutput=1
\usepackage[margin=1in]{geometry}

\usepackage[parfill]{parskip}
\usepackage[british]{babel}
\usepackage[utf8]{inputenc}
\usepackage[T1]{fontenc}
\usepackage{lmodern}
\usepackage{sectsty}
\usepackage{booktabs}
\usepackage{amsmath,amssymb,amsthm,amsfonts}
\usepackage{mathtools}
\usepackage[pdftex]{graphicx}
\usepackage{xspace}
\usepackage{float}
\restylefloat{table}
\usepackage{bibentry}
\usepackage{authblk}
\usepackage{subcaption}
\usepackage{qcircuit}
\usepackage{bbm}
\usepackage[bookmarksnumbered,bookmarksopen,unicode,colorlinks,allcolors=blue]{hyperref}
\usepackage{algorithm}
\usepackage[noend]{algpseudocode}
\algrenewcommand\algorithmicrequire{\textbf{Input:}}
\algrenewcommand\algorithmicensure{\textbf{Output:}}
\usepackage{cleveref}
\usepackage{url}

\newcommand{\N}{\mathbb{N}}

\newtheorem{theorem}{Theorem}[section]
\newtheorem{corollary}[theorem]{Corollary}
\newtheorem{lemma}[theorem]{Lemma}
\newtheorem{proposition}[theorem]{Proposition}
\newtheorem{conjecture}[theorem]{Conjecture}

\theoremstyle{definition}
\newtheorem{definition}[theorem]{Definition}

\newtheorem{example}[theorem]{Example}
\newtheorem{problem}[theorem]{Problem}

\DeclareMathOperator{\conv}{\mathrm{conv}}

\newcommand{\MT}{\text{\rm MT}}
\newcommand{\ML}{\text{\rm MS}}
\newcommand{\define}{\mathrel{{\mathop:}{=}}}
\allsectionsfont{\normalsize}

\makeatletter
\newcommand{\setword}[2]{%
  \phantomsection
  #1\def\@currentlabel{\unexpanded{#1}}\label{#2}%
}
\makeatother

\title{Optimized Qubit Routing for Commuting Gates via\\Integer Programming}

\author[1,*]{Moritz Stargalla}
\author[2]{Friedrich Wagner}
\affil[1]{Applied Discrete Mathematics Lab, University of Technology Nuremberg}
\affil[2]{Fraunhofer Institute for Integrated Circuits, Nuremberg}
\affil[*]{\texttt{\small moritz.stargalla@utn.de}}
\date{}

\begin{document}

\maketitle

\begin{abstract}
\noindent
Quantum computers promise to outperform their classical counterparts at certain tasks.
However, existing quantum devices are error-prone and restricted in size.
Thus, effective compilation methods are crucial to exploit limited quantum resources.
In this work, we address the problem of qubit routing for commuting gates, which arises, for example, during the compilation of the well-known Quantum Approximate Optimization Algorithm.
We propose a two-step decomposition approach based on integer programming, which is guaranteed to return an optimal solution.
To justify the use of integer programming, we prove NP-hardness of the underlying optimization problem.
Furthermore, we derive asymptotic upper and lower bounds on the quality of a solution.
We develop several integer programming models and derive linear descriptions of related polytopes, which generalize to applications beyond this work.
Finally, we conduct a computational study showing that our approach outperforms existing heuristics in terms of quality and exact methods in terms of runtime.
\end{abstract}
\section{Introduction}
Quantum computing bears the potential to impact various fields such as chemistry~\cite{Bauer_2020}, material science~\cite{barkoutsos2021quantum}, machine learning~\cite{cerezo2022} and optimization~\cite{Abbas_2024}.
However, current quantum hardware is limited in size and prone to high error rates~\cite{preskill2018quantum}.
Thus, it is crucial to efficiently exploit limited quantum resources and to reduce error probabilities.
\emph{Qubit routing} is an $\mathcal{NP}$-hard combinatorial optimization problem which arises when compiling quantum algorithms to hardware-specific instructions~\cite{nannicini2021optimalqubitassignmentrouting,Wagner_2023}.
The quality of the qubit routing solution heavily influences the success probability during execution~\cite{Murali2019,wagner2025optimized}.

Quantum algorithms are generally formulated in the circuit model of quantum computing, see~\cite{Nielsen_Chuang_2010} for an introduction.
An example quantum circuit is depicted in \Cref{fig:intro_ex1}.
Such a circuit comprises \emph{wires} representing \emph{logical qubits} and a temporal sequence of \emph{gates} applied to subsets of logical qubits.
In this work, we consider circuits containing exclusively gates that act on pairs of qubits, called two-qubit gates.
The presence of single-qubit gates does not alter the resulting qubit routing problem.
Moreover, any multi-qubit gate can be decomposed into single- and two-qubit gates~\cite{dawson2005solovay}.

A quantum circuit may apply two-qubit gates on any pair of logical qubits.
However, actual quantum processors can only apply two-qubit gates on certain pairs of qubits.
Thus, quantum processors are often represented by a \emph{hardware graph}, see \Cref{fig:intro_ex1} for an example.
The nodes of the hardware graph correspond to \emph{physical qubits} while its edges define the pairs of physical qubits on which two-qubit gates can be applied.

\textbf{General qubit routing.}
To execute a circuit on a quantum computer with a given hardware graph, one must choose an initial allocation of logical to physical qubits ensuring that for each two-qubit gate, the logical qubits are located at neighboring physical qubits in the hardware graph.
However, such an allocation does not exist in general, as is the case for the example in \Cref{fig:intro_ex1}.
In such a case, additional \emph{swap gates} have to be inserted into the circuit.
A swap gate is a two-qubit gate which effectively swaps the allocations of the involved logical qubits.
Thus, swap gates allow to modify the initial allocation of logical to physical qubits.
The outcome is a routed circuit that meets the connectivity restrictions and is equivalent to the original circuit up to a permutation of the qubits, as shown in \Cref{fig:intro_ex1}.
This permutation results from the initial allocation and the swap gates.

\begin{figure}
    \centering
    \includegraphics[page=1, width=\textwidth]{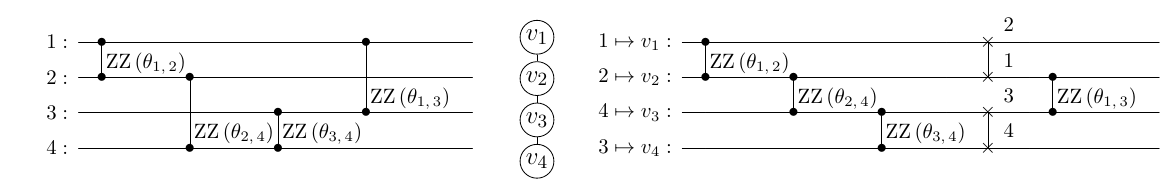}
	\caption{
	Example of a quantum circuit with commuting gates (left),
        a hardware graph (center),
        and a routed quantum circuit with inserted swap gates (right).
        For each gate in the routed circuit, the involved pair of logical qubits gates is allocated at neighboring physical qubits.
        The order of gates in the routed circuit is the same as in the original circuit.
	}
	\label{fig:intro_ex1}
\end{figure}

Due to high gate error rates in existing quantum devices, it is crucial to minimize the number of swap gates in the routed circuit.
Moreover, the error probability increases with the time taken to execute the routed circuit.
This translates into minimizing the \emph{depth} of the circuit.
The depth is defined as the minimum number of \emph{layers} the gates can be partitioned into.
A layer is a set of consecutive gates that act on disjoint qubits and can thus be executed in parallel.
For example, the depth of the routed circuit in \Cref{fig:intro_ex1} amounts to $5$.

\textbf{Qubit routing with commuting gates.}
Generally, the order of the two-qubit gates in a circuit matters.
However, important applications such as the Quantum Approximate Optimization Algorithm (QAOA)~\cite{farhi2014quantum} and Hamiltonian simulation~\cite{CarreraVazquez2023wellconditioned} result in circuits with blocks of \emph{commuting} gates.
Thus, inside one block the order of gates can be changed arbitrarily.
In such a case, the order of the gates provides an additional degree of freedom that can be optimized, potentially reducing the number of swap gates and the circuit depth as illustrated in the example of \Cref{fig:intro_circuit_swap_comm}.

\begin{figure}
    \centering
	\includegraphics[page=2, width=0.375\textwidth]{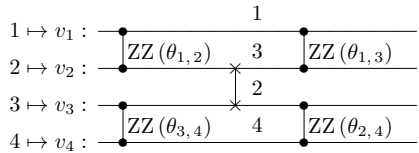}
	\caption{
		A routed quantum circuit that is equivalent to the circuits in \Cref{fig:intro_ex1}
        but exploits commutativity.
        Both depth and swap count are reduced compared to the routed circuit in \Cref{fig:intro_ex1}.
	}
	\label{fig:intro_circuit_swap_comm}
\end{figure}

\subsection{Our Contribution}
We propose an algorithm for the qubit routing problem with commuting gates that is guaranteed to return a circuit with a minimum number of swaps.
This is in contrast to existing heuristic methods, which do not provide any performance guarantees.
Moreover, our algorithm also heuristically minimizes depth among all swap-optimal solutions.
We achieve this by developing a two-stage optimization approach that lexicographically optimizes for swap count and depth.
First, we compute an initial mapping of circuit to physical qubits and a set of swap layers ensuring that each gate is executable.
In the second stage, we schedule all gates of the original circuit into either a swap layer or a new layer, aiming to minimize the depth of the resulting circuit.
We formally define the first problem of our two-step approach as a combinatorial optimization problem, which we call \emph{Token Meeting Problem} (TMP).
We analyze the TMP and derive asymptotic lower and upper bounds on the cost of a solution.
After proving $\mathcal{NP}$-hardness, we propose several integer programming models for solving the TMP.
Furthermore, we provide linear descriptions for two associated polytopes, which may have applications beyond this work.
Additionally, we develop an integer programming model also for the second stage of our approach.
Finally, we conduct computational experiments which show that our methods outperform existing heuristics in terms of quality and existing exact methods in terms of solution time.

\subsection{Related Work}
For an introduction to quantum computing, we refer to \cite{Nielsen_Chuang_2010}.
Ge et al.~\cite{ge2024quantum} provide a comprehensive review on the synthesis and compilation of quantum circuits.

Numerous heuristics have been proposed for the general qubit routing problem \cite{zulehner2018efficient, cowtan2019qubit,li2019tackling,sivarajah2020t,tan2025compilation, sun2025haqa}, see \cite{tan2020optimality} for a detailed survey.
Heuristics typically scale well with respect to the circuit size and are thus the method of choice for scalable and highly-automated compilation pipelines.
Arguably, one of the most popular heuristics is the SABRE method by Li et al.~\cite{li2019tackling}, which leverages the reversibility of quantum circuits to perform a bidirectional search for a good initial mapping, thereby reducing the necessary swap gate count.

Existing exact methods for general qubits routing are based on satisfiability (SAT) \cite{wille2019mapping, molavi2022qubit, yang2024quantum, shaik2024optimal, shaik2024optimal2}, satisfiability modulo theory (SMT) \cite{tan2020optimal, lin2023scalable} or integer programming \cite{nannicini2021optimalqubitassignmentrouting,Wagner_2023,wagner2025optimized}.
In general, exact methods suffer from an exponential runtime scaling.
However, they provide valuable insights in the performance of heuristics and are inevitable for applications which require an optimum exploitation of limited quantum resources.
In this work, we build upon the integer programming model for general qubit routing proposed by Nannicini et al.~\cite{nannicini2021optimalqubitassignmentrouting}.

Several heuristic algorithms have been introduced to address qubit routing where some or all gates are allowed to commute~\cite{Weidenfeller_2022,lao20222qan,matsuo2023sat,jin2023exploiting,kotil2023improvedqubitroutingqaoa}.
A popular method by Weidenfeller et al.~\cite{Weidenfeller_2022} alternatingly applies swap layers and layers of gates that are executable under the current qubit mapping until all gates are scheduled.
Generally, many heuristic tools suffer from large optimality gaps in terms of swaps and depth \cite{tan2020optimality, ping2025assessing}.
Some (near-)exact methods for general qubit routing also adapt to qubit routing where some or all gates are allowed to commute.
The SMT-based method OLSQ, introduced in~\cite{tan2020optimal}, can guarantee optimality for qubit routing with commuting gates with respect to gate count or depth.
Dropping the guarantee of optimality, the authors proposed the faster, near-exact method TB-OLSQ.
Lin et al.~\cite{lin2023scalable} presented the improved methods OLSQ2 (exact) and TB-OLSQ2 (near-exact).
The authors of~\cite{yang2024quantum} and~\cite{shaik2024optimal} further improved upon TB-OLSQ2 using SAT encodings, where \cite{shaik2024optimal} consider circuits consisting only of CNOT gates.
The methods \cite{yang2024quantum, shaik2024optimal, tan2020optimal, lin2023scalable} all solve a series of problems increasing in size.
Our approach also applies this concept.
All (near-)exact methods suffer from high runtimes~\cite{sun2025haqa}.
In this work, we aim to push this boundary for the special case of qubit routing with commuting gates.

\section{Two-Step Optimization Approach}\label{sec:prelim}
In this section, we detail our approach to the qubit routing problem with commuting gates.
In a quantum circuit containing only commuting two-qubit gates, we may aggregate all gates acting on the same pair of qubits into a single two-qubit gate.
This allows us to represent the circuit as an \emph{algorithm graph} $A=(Q,C)$, where the nodes $Q$ represent logical qubits and the edges $C$ represent two-qubit gates.
Also the quantum hardware can be represented by a graph $H=(V,E)$ where the nodes $V$ correspond to physical qubits and the edges $E$ corresponding to available two-qubit gates.
Thus, an instance to the qubit routing problem with commuting gates can be encoded as a pair of graphs $(H,A)$.

\textbf{Full solution.}
For $n\in \N$, we define $[n]\coloneqq\{1,\dots, n\}$.
A routed circuit can be encoded by its initial allocation $f_1:Q \xrightarrow{1:1} V$ of logical qubits to physical qubits
and a sequence of pairs of matchings $\mathcal{F} = ((M_1, C_1),\dots, (M_k, C_k))$, where $M_i, C_i \subset E$ and $M_i\cap C_i=\emptyset$ for all $i\in [k]$.
For each $i\in [k]$, $M_i$ and $C_i$ correspond to the swap gates and algorithm gates in the $i$-th layer of the circuit, respectively.
We require $\bigcup_{i\in [k]} f_i^{-1}(C_i) = C$, where for $i>1$, $f_i:Q \xrightarrow{1:1} V$ is the bijection obtained by applying the swap gates in $M_{i-1}$ to $f_{i-1}$.
More formally, for $i>1$,
\begin{align}\label{eq:mapping}
    f_i \coloneqq \pi_{M_{i-1}} \circ f_{i-1}
\end{align}
where $\pi_{M_{i-1}} \coloneqq \prod_{\{u,v\}\in M_{i-1}} \tau_{uv}$ and $\tau_{uv}:V\xrightarrow{1:1}V$ is the transposition defined by $\{u,v\}\in E$.
We refer to $(f_1,\mathcal{F})$ as a \emph{full solution}.
The depth of the resulting circuit is then exactly $k$.

\textbf{Swap solution.} Since our primary objective is to minimize the number of inserted swap gates, the depth does not have to be encoded explicitly.
Instead, it suffices to encode a solution only by its initial allocation $f_1$ and a sequence of matchings $\mathcal{S} = (M_1,\dots,M_k)$, where $M_i\subset E$ corresponds to the set of swap gates inserted in the circuit at the $i$-th swap layer.
We require that each gate $\{p,q\}\in C$ is \emph{executable}, that is, there exists a $i\in \{1,\dots,k+1\}$ such that $\{f_i(p),f_i(q)\}\in E$, where $f_i$ is defined as above.
We refer to $(f_1,\mathcal{S})$ as a \emph{swap solution}.

We propose a two-stage optimization approach.
The overall solution procedure is illustrated in \Cref{fig:solution_approach}.
\begin{figure}
    \centering
    \includegraphics[page=3,width=1\linewidth]{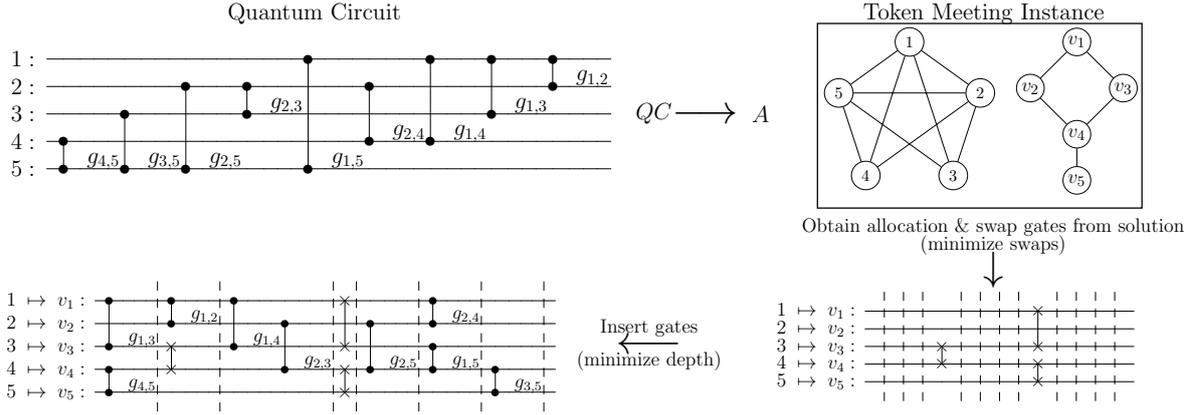}
    \caption{The overall solution approach.
    First, a quantum circuit $QC$ with only commuting two-qubit gates is converted into its graph representation $A$.
    Next, a Token Meeting Instance consisting of the hardware graph and $A$ is solved, which corresponds to finding a swap-optimal swap solution.
    The last step consists of converting the swap solution into a full solution, which can be seen as inserting gates into a circuit while minimizing depth.}
    \label{fig:solution_approach}
\end{figure}
First, we compute an initial mapping of circuit to physical qubits and a set of swap layers ensuring that each gate is executable.
Then, we schedule all gates of the original circuit into either a swap layer or a new layer, aiming to minimize the depth of the resulting circuit.
The advantage of the two-step approach is that a swap solution can be encoded in an integer programming model with less variables than a full solution.
However, we emphasize that the two-step approach is still globally optimal in terms of swap count.
We also employ integer programming to solve the second problem, that is, to compute a full solution with minimum depth from a given swap solution.
This guarantees that the returned full solution has minimum depth among all solutions using the swap layers of our swap solution.
The swap solution representation gives rise to an interesting combinatorial problem, which we call the \emph{Token Meeting Problem}.

\begin{definition}
    An \emph{instance} of the \emph{Token Meeting Problem (TMP)} is given by a pair $(H, A)$ of undirected graphs $H=(V, E)$ and $A=(Q, C)$.
\end{definition}
In the context of the Token Meeting Problem, we refer to the elements of $V$ as \emph{nodes}, $E$ as \emph{edges}, $Q$ as \emph{tokens}, and $C$ as \emph{connections}.

\begin{definition}
    A \emph{solution} $(f_1,\mathcal{S})$ to a TMP instance $(H, A)$ is given by a bijection $f_1:Q \xrightarrow{1:1} V$ and a sequence of matchings $\mathcal{S}=(M_1,\dots,M_k)$, $M_i \subset E$, such that for each connection $\{p,q\}\in C$, the \emph{adjacency condition} is satisfied, that is, there is a $t\in [k+1]$ with $\{f_t(p),f_t(q)\}\in E$.
    Here, $f_t$ for $t> 1$ is defined as in~\eqref{eq:mapping}.
\end{definition}
We define two metrics for evaluating solutions.
\begin{definition}
	Given a solution $(f_1,\mathcal{S})$ to a TMP instance, where $\mathcal{S} = (M_1,\dots,M_k)$, the \emph{number of steps} in the solution is $k$ and the \emph{number of swaps} in the solution is $\sum_{t=1}^{k}|M_t|$.
\end{definition}

\begin{definition}
Given a TMP instance $(H, A)$ and a natural number $t\in \N$, we define $\ML(H, A)$ as the \underline{m}inimum number of \underline{s}waps of any solution, $\MT(H, A)$ as the \underline{m}inimum number of (\underline{t}ime) steps of any solution
and $\ML(H, A, t)$ as the minimum number of swaps of any solution with exactly $t$ steps.
If there is no solution with exactly $t$ steps, we set $\ML(H, A, t)=\infty$.
\end{definition}

We start with a basic observation.
In general, having more steps available cannot increase the number of swaps:
a solution with $t$ steps can be transformed into a solution with more steps by simply adding steps with empty matchings.
The following lemma formalizes this observation.
\begin{lemma}\label{lem:step_increase_decreases_swaps}
	For every TMP instance $(H, A)$ and $t\geq 0$, it holds $\ML(H, A, t+1) \leq \ML(H, A, t)$.
\end{lemma}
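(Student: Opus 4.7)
The plan is to argue by a trivial padding construction, just as the preamble to the lemma already hints. First I would dispose of the case $\ML(H,A,t)=\infty$, where the claimed inequality holds vacuously. So assume $\ML(H,A,t)<\infty$ and fix a solution $(f_1,\mathcal{S})$ with $\mathcal{S}=(M_1,\dots,M_t)$ attaining this minimum.

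Next I would produce the candidate $(t+1)$-step solution by appending an empty matching: set $\mathcal{S}'\coloneqq(M_1,\dots,M_t,\emptyset)$. The only thing that needs checking is that $(f_1,\mathcal{S}')$ is still a valid solution to $(H,A)$, i.e. the adjacency condition from the definition of a solution is preserved. This is immediate: the mappings $f_1,\dots,f_{t+1}$ produced by~\eqref{eq:mapping} are unchanged, and the new mapping $f_{t+2}=\pi_{\emptyset}\circ f_{t+1}=f_{t+1}$ simply repeats $f_{t+1}$ since the identity permutation $\pi_\emptyset$ is an empty product of transpositions. Hence every connection $\{p,q\}\in C$ that was executable at some step $s\in[t+1]$ under $(f_1,\mathcal{S})$ is still executable at the same step $s\in[t+2]$ under $(f_1,\mathcal{S}')$.

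Finally, counting swaps gives $\sum_{i=1}^{t+1}|M'_i|=\sum_{i=1}^{t}|M_i|+|\emptyset|=\ML(H,A,t)$, so $\ML(H,A,t+1)\leq \ML(H,A,t)$ as desired. There is no real obstacle in this proof: the only microscopically subtle point is verifying that $\pi_\emptyset$ is well defined as the identity and that the adjacency condition is monotone under extending the step sequence, both of which follow directly from the definitions in~\eqref{eq:mapping} and the preceding paragraphs.
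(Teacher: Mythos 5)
Your proof is correct and matches the paper's own justification exactly: the paper proves this lemma via the padding observation stated immediately before it (append steps with empty matchings), and your write-up simply fills in the routine details, including the vacuous infinite case.
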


Since the inequality of \Cref{lem:step_increase_decreases_swaps} can be strict (see \Cref{ex:strict_ieq}), there is a natural trade-off between minimizing steps and minimizing swaps.
Optimizing for fewer steps might require more total swaps and vice versa.

\begin{example}\label{ex:strict_ieq}
	Let $H=P_6$ be the path graph and let $A$ be the star graph with $6$ nodes, as displayed in \Cref{fig:counterexample_graphs}.
	\begin{figure}
		\centering
        \includegraphics[page=4, width=0.25\textwidth]{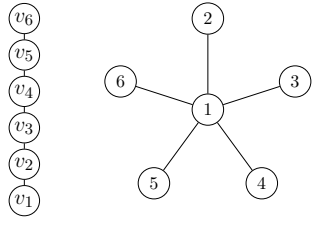}
		\caption{
			An instance to the Token Meeting Problem, defined by the graphs $H$ (left) and $A$ (right).
		}
		\label{fig:counterexample_graphs}
	\end{figure}
	By enumeration, one can verify that $\ML(H, A, 3)= 3 < 4 = \ML(H, A, 2)$ holds.
	A solution $S_1=(f_1,f_2,f_3)$ with four swaps and a solution $S_2=(f'_1,f'_2,f'_3,f'_4)$ with three swaps are shown in \Cref{fig:counterexample_sol1}.
	\begin{figure}
		\centering
        \includegraphics[page=5, width=0.6\textwidth]{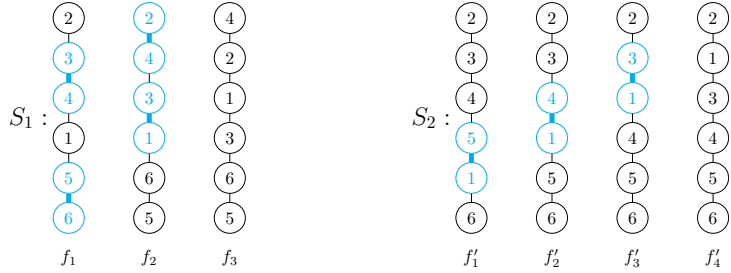}
		\caption{
			Two solutions $S_1$ (left) and $S_2$ (right) to the TMP instance of \Cref{fig:counterexample_graphs}.
			For each step, the bijection from $Q$ to $V$ is displayed, and edges corresponding to swaps are highlighted in blue.
			Although $S_2$ needs more steps than $S_1$, it requires fewer swaps.
		}
		\label{fig:counterexample_sol1}
	\end{figure}
\end{example}

\subsection{Computing a Swap Optimal Solution}
In the following, we describe an algorithm to compute $\ML(H, A)$ given the ability to compute $\ML(H, A, t)$ for $t\in \N$.
The motivation is that we propose several compact integer programming models to compute $\ML(H, A, t)$ in \Cref{sec:ip_models}, while it is unlikely that a compact integer programming model for $\ML(H, A)$ even exists, without having prior knowledge of $\ML(H, A)$.
\Cref{alg:step_increase} summarizes our approach to compute $\ML(H, A)$.
\begin{algorithm}
	\caption{}\label{alg:step_increase}
	\begin{algorithmic}[1]
		\Require A TMP instance $(H, A)$.
		\Ensure $\ML(H, A)$
		\For{$t=0,1,\dots$}
		\If{$\ML(H, A, t)=t$} 
		\Return $t$.
		\EndIf
		\EndFor
	\end{algorithmic}
\end{algorithm}

In simple words, we iteratively compute $\ML(H, A, t)$ for $t=0,1,\dots$
until $t=\ML(H, A, t)$.
Then, it holds $t=\ML(H, A)$ as the following lemma shows.
\begin{lemma}\label{prop:termination_increasing_steps}
	Let $(H, A)$ be a TMP instance.
	Then $\ML(H, A, t)=t$ holds if and only if $t=\ML(H, A)$.
\end{lemma}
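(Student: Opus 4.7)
The plan is to prove both directions by manipulating solutions via two swap-preserving operations: \emph{padding} (appending an empty matching) and \emph{deletion} (removing an empty matching from the sequence). Both operations preserve the adjacency condition: since $\pi_\emptyset = \mathrm{id}$, inserting or removing an empty $M_j$ only duplicates or de-duplicates the bijection $f_j$ in the sequence $f_1,\dots,f_{k+1}$; the underlying set of bijections that can witness the adjacency of a connection is thus unchanged. A second routine observation I will use is that any solution with $s$ swaps and $k$ steps has at least $\max(0,k-s)$ empty matchings, because each non-empty matching contributes at least one swap.

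For the ``if'' direction, assume $t=\ML(H,A)$. The inequality $\ML(H,A,t)\geq \ML(H,A)=t$ is immediate since every $t$-step solution is a solution. For the other inequality, take an optimal solution with $t$ swaps and some number $k$ of steps. If $k\leq t$, pad it with $t-k$ empty matchings to obtain a $t$-step solution with $t$ swaps. If $k>t$, then at least $k-t$ matchings are empty (since the non-empty ones number at most $t$); delete $k-t$ of them to obtain a $t$-step solution with $t$ swaps. In either case $\ML(H,A,t)\leq t$.

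For the ``only if'' direction, assume $\ML(H,A,t)=t$. Clearly $\ML(H,A)\leq \ML(H,A,t)=t$. Suppose for contradiction that $s\coloneqq \ML(H,A)<t$, and take a solution realising $s$ swaps in $k$ steps. Apply the same padding/deletion procedure with target length $t$: if $k\leq t$, pad with $t-k$ empty matchings; if $k>t$, then since $s<t<k$ there are at least $k-s>k-t$ empty matchings available, so we can delete $k-t$ of them. This produces a $t$-step solution with only $s$ swaps, whence $\ML(H,A,t)\leq s<t$, contradicting the hypothesis.

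The only non-routine point is the verification that deleting an empty matching preserves adjacency; I expect this to be the main (mild) obstacle, and it will follow from unfolding the recursion $f_{i+1}=\pi_{M_i}\circ f_i$ and noting that $M_j=\emptyset$ forces $f_{j+1}=f_j$, so no witness $f_\tau$ is lost. Everything else is bookkeeping on $|M_i|$ and $k$.
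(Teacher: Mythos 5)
Your proof is correct and rests on the same core observation as the paper's: empty matchings can be appended to or removed from a solution without changing the swap count or violating the adjacency condition. The only organizational difference is that the paper derives the ``only if'' direction from the strict decrease of the sequence $\ML(H,A,t)-t$ (via \Cref{lem:step_increase_decreases_swaps}), whereas you argue it directly by constructing a $t$-step solution with $\ML(H,A)$ swaps; both amount to the same padding/deletion manipulation, and your explicit verification that deleting an empty matching preserves adjacency is a detail the paper leaves implicit when it asserts the existence of a swap-optimal solution with only non-empty matchings.
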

\begin{proof}
	We prove the statement by showing that the sequence $(\ML(H, A, t)-t)_{t\geq\MT(H, A)}$ is strictly decreasing with $\ML(H, A, t)-t=0$ for $t=\ML(H, A)$.
	From \Cref{lem:step_increase_decreases_swaps}, we have
	\[
		\ML(H, A, t)-t\geq \ML(H, A, t+1)-t>\ML(H, A, t+1)-(t+1)
	\]
	for all $t\geq \MT(H, A)$, which shows that the sequence is strictly decreasing.
	
	Let $S$ be a solution with $t=\ML(H, A)$ swaps and only non-empty matchings.
	By definition, we have $t=\ML(H, A)\leq \ML(H, A, t)$.
	By potentially adding steps with empty matchings to $S$, we can construct a solution with exactly $t$ steps and $\ML(H, A)$ swaps.
	Therefore, $\ML(H, A, t)=\ML(H, A)=t$.
\end{proof}

Bounds on $\ML(H,A)$ and $\MT(H, A)$ are useful to \Cref{alg:step_increase} in several ways.
First, an \emph{upper bound} $t^*$ on $\ML(H, A)$ directly yields an upper bound on the number of steps considered in \Cref{alg:step_increase}, since $\ML(H, A, t^*)=\ML(H, A)$.
However, if $t^*$ is significantly larger than $\ML(H, A)$, computing $\ML(H, A, t^*)$ might be computationally more expensive than \Cref{alg:step_increase}.
Conversely, a nontrivial \emph{lower bound} on $\MT(H, A)$ can help avoid unnecessary computation in early iterations of \Cref{alg:step_increase} by skipping values of $t$ with $t<\MT(H, A)$ where $\ML(H, A, t)=\infty$ holds.

\section{Bounds on the Number of Swaps and Steps}
\label{sec:bounds}
In this section, we derive upper and lower bounds on $\MT(H,A)$ and $\ML(H,A)$.
We use these bounds to prove an asymptotic worst-case scaling of $\MT(H,A)$ and $\ML(H,A)$ and to avoid unnecessary computations in \Cref{alg:step_increase}.
First, we collect a preliminary result.

\begin{lemma}\label{lem:mt_ml_bounds}
If $(H, A)$ is a TMP instance, then $\MT(H, A)\leq \ML(H, A)\leq \lfloor n/2 \rfloor \cdot \MT(H, A)$.
\end{lemma}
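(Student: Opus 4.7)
The plan is to handle the two inequalities separately, both by constructing explicit solutions of the appropriate form.

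For the left inequality $\MT(H, A) \leq \ML(H, A)$, I would start with a swap-optimal solution $(f_1, \mathcal{S})$, $\mathcal{S} = (M_1, \dots, M_k)$, achieving $\sum_t |M_t| = \ML(H, A)$. The key observation is that any index $t$ with $M_t = \emptyset$ gives $\pi_{M_t} = \mathrm{id}$, so by~\eqref{eq:mapping} we have $f_{t+1} = f_t$. Deleting such a step from $\mathcal{S}$ therefore leaves the entire remaining sequence of mappings unchanged, so the adjacency condition for every connection still holds at exactly the same time index as before. After iteratively removing all empty matchings, I obtain a valid solution in which every matching contains at least one edge. Its step count is then at most the total number of swaps $\ML(H, A)$, and by definition of $\MT$ its step count is at least $\MT(H, A)$, yielding the desired inequality.

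For the right inequality $\ML(H, A) \leq \lfloor n/2\rfloor\cdot \MT(H, A)$, I would instead start from a step-optimal solution $(f_1, \mathcal{S}')$ with $|\mathcal{S}'| = \MT(H, A)$. Each $M_i$ is a matching in $H$, so $|M_i| \leq \lfloor n/2 \rfloor$ where $n = |V|$. Summing over the $\MT(H, A)$ steps gives a solution of total swap count at most $\lfloor n/2\rfloor\cdot \MT(H, A)$, which bounds $\ML(H, A)$ from above.

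Neither direction should present a serious obstacle. The only slightly subtle point is the first one: verifying that dropping empty matchings is a legitimate operation. This reduces to the one-line argument that $\pi_\emptyset$ is the identity transposition product, so suppressing an empty step does not perturb any subsequent mapping, and hence preserves both the chain~\eqref{eq:mapping} and the adjacency condition. Everything else is bookkeeping.
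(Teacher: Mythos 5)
Your proposal is correct and matches the paper's argument: the right inequality is proved identically (each matching has at most $\lfloor n/2\rfloor$ edges, so a step-optimal solution has at most $\lfloor n/2\rfloor\cdot\MT(H,A)$ swaps), and your left inequality simply spells out the "easy to see" step that the paper leaves implicit, by dropping empty matchings from a swap-optimal solution so that the step count is bounded by the swap count. No gaps.
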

\begin{proof}
	It is easy to see that $\MT(H, A)\leq \ML(H, A)$.
	Since the size of a matching of $H$ is bounded by $\lfloor n/2 \rfloor$, at most $\lfloor n/2 \rfloor$ swaps can occur per step, implying that a solution with $\MT(H, A)$ steps has at most $\lfloor n/2 \rfloor \cdot \MT(H, A)$ swaps.
\end{proof}

\subsection{Upper Bounds}
In order to derive upper bounds on $\MT(H,A)$ and $\ML(H,A)$, we will show that it suffices to derive such bounds only for a particular class of extreme instances.
First we observe that, given a TMP instance $(H, A)$, replacing $H$ or $A$ with an isomorphic graph essentially preserves the instance.
We formalize this in the following lemma, which can be proven with elementary manipulations.
\begin{lemma}\label{cor:isomorphism_trafo}
    Let $(H, A)$ be an TMP instance and let $(f_1,\dots, f_{k+1})$ be the sequence of bijections of a solution $S=(f_1,(M_1,\dots, M_k))$.
    If $H$ is isomorphic to the graph $H'=(V',E')$ with the isomorphism $\sigma_{H}: V\mapsto V'$ and $A$ is isomorphic to the graph $A'=(Q',C')$ with the isomorphism $\sigma_{A}: Q\mapsto Q'$, then $(\sigma_{H} \circ f_1 \circ \sigma_{A},\dots, \sigma_{H} \circ f_{k+1}\circ \sigma_{A})$ corresponds to a solution to the TMP instance $(H',A')$ with the same number of swaps and steps as $S$.
\end{lemma}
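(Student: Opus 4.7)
The plan is to verify directly that the natural image of the solution sequence under the isomorphisms is itself a valid solution to the transformed instance $(H', A')$, with the swap and step counts manifestly preserved. Since graph isomorphisms preserve adjacency, matchings, and incidence, the verification is essentially bookkeeping, and the only point that requires any algebraic care is the interaction of $\sigma_H$ with the product $\pi_{M_i}$ of disjoint transpositions.

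Concretely, I would set $f_i' \define \sigma_H \circ f_i \circ \sigma_A^{-1}$ and $M_i' \define \{\{\sigma_H(u), \sigma_H(v)\} : \{u,v\} \in M_i\}$, reading the composition in the statement with $\sigma_A^{-1}$ in place of $\sigma_A$ so that $f_i'$ has the correct domain $Q'$. Each $f_i'$ is a bijection $Q' \to V'$ as a composition of bijections, and $M_i' \subseteq E'$ because $\sigma_H$ is a graph isomorphism. The matching property of $M_i'$ follows from injectivity of $\sigma_H$: two distinct edges of $M_i$ are vertex-disjoint, hence so are their images under $\sigma_H$.

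The heart of the argument is the recurrence. The key identity is the conjugation $\sigma_H \circ \tau_{uv} \circ \sigma_H^{-1} = \tau_{\sigma_H(u)\sigma_H(v)}$ for $\{u,v\} \in E$, which is verified by evaluation on $V'$. Since the transpositions indexed by a matching pairwise commute and have pairwise disjoint support, this extends to the identity $\pi_{M_i'} = \sigma_H \circ \pi_{M_i} \circ \sigma_H^{-1}$. Substituting into~\eqref{eq:mapping} for the primed solution and collapsing the inner $\sigma_H^{-1}\circ \sigma_H$, we obtain
\[
\pi_{M_i'} \circ f_i' = \sigma_H \circ \pi_{M_i} \circ \sigma_H^{-1} \circ \sigma_H \circ f_i \circ \sigma_A^{-1} = \sigma_H \circ f_{i+1} \circ \sigma_A^{-1} = f_{i+1}',
\]
so the recurrence is preserved for all $i$. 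The adjacency condition transfers just as cleanly: for any connection $\{p',q'\} \in C'$, the pair $\{\sigma_A^{-1}(p'), \sigma_A^{-1}(q')\}$ is a connection in $C$, so there exists $t \in [k+1]$ with $\{f_t(\sigma_A^{-1}(p')), f_t(\sigma_A^{-1}(q'))\} \in E$; applying $\sigma_H$ and using edge-preservation yields $\{f_t'(p'), f_t'(q')\} \in E'$.

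Finally, $|M_i'| = |M_i|$ because $\sigma_H$ is a bijection, so the swap counts agree, and both solutions have exactly $k$ steps. I do not expect a real obstacle; the one step worth isolating is the conjugation identity $\pi_{M_i'} = \sigma_H \circ \pi_{M_i} \circ \sigma_H^{-1}$, which is only meaningful because the matching property makes the order of the transpositions in the product immaterial.
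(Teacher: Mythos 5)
Your proof is correct and supplies exactly the ``elementary manipulations'' that the paper omits (it states this lemma without proof): the conjugation identity $\pi_{M_i'}=\sigma_H\circ\pi_{M_i}\circ\sigma_H^{-1}$ is indeed the one step worth writing down, and your verification of the recurrence, the adjacency condition, and the swap/step counts is complete. You also correctly spotted and repaired the type mismatch in the statement --- the composition only makes sense with $\sigma_A^{-1}$ in place of $\sigma_A$, since $f_i$ has domain $Q$ while the new bijections must have domain $Q'$.
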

Given a solution $S$ to a TMP instance $(H, A)$, deleting connections from $A$ means that the adjacency condition remains satisfied for all remaining connections.
Also, by adding edges to $H$, the matchings $(M_1,\dots, M_k)$ remain matchings in the modified graph.
Informally, $S$ is also a solution to all instances that are obtained by making $A$ ``smaller'' and $H$ ``larger''.
\begin{lemma}\label{lem:subgraph_hierarchy}
Let $(H, A)$ and $(H', A')$ be TMP instances on $n$ nodes, where $A'$ is isomorphic to a spanning subgraph of $A$, and $H$ is isomorphic to a spanning subgraph of $H'$. Then
\begin{equation*}
    \MT(H', A')\leq \MT(H, A)\quad\text{ and }\quad \ML(H', A', t)\leq \ML(H, A, t)\quad \text{ for all } t\in \N.
\end{equation*}
\end{lemma}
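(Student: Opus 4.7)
The plan is to establish the stronger claim that any solution to $(H, A)$ can be converted into a solution to $(H', A')$ having the same number of steps and the same number of swaps. Both stated inequalities then follow immediately: applying this to a minimum-step solution of $(H, A)$ yields $\MT(H', A') \le \MT(H, A)$, and applying it to a minimum-swap solution with exactly $t$ steps yields $\ML(H', A', t) \le \ML(H, A, t)$.

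The conversion proceeds in two stages. First, I would use \Cref{cor:isomorphism_trafo} to remove the isomorphisms from the picture. Let $\tilde H$ denote the spanning subgraph of $H'$ that is isomorphic to $H$, and let $\tilde A$ denote the spanning subgraph of $A$ that is isomorphic to $A'$. Applying \Cref{cor:isomorphism_trafo} with the isomorphism $H \cong \tilde H$ (and the identity on $A$), the given solution of $(H, A)$ transports to a solution of $(\tilde H, A)$ with identical step and swap counts. Symmetrically, any solution of $(H', \tilde A)$ produced later can be transported to a solution of $(H', A')$ via the isomorphism $\tilde A \cong A'$, again preserving counts.

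The central observation is that a solution to $(\tilde H, A)$ is automatically a solution to $(H', \tilde A)$ using the very same data, i.e., the same bijection $f_1$ and the same sequence of matchings. This requires three routine checks: that each $M_i \subseteq E(\tilde H) \subseteq E(H')$ is still a matching in $H'$; that the derived bijections defined by \eqref{eq:mapping} are unchanged because they depend only on the transpositions associated with the swap edges; and that for every connection $\{p,q\} \in C(\tilde A) \subseteq C(A)$, the step $t^*$ that witnesses the adjacency condition in $(\tilde H, A)$ gives $\{f_{t^*}(p), f_{t^*}(q)\} \in E(\tilde H) \subseteq E(H')$. The underlying monotonicity is transparent: shrinking $A$ to $\tilde A$ only drops adjacency constraints, while enlarging $\tilde H$ to $H'$ only enlarges the ambient edge set available for matchings and for adjacency. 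I do not foresee any real obstacle here; the argument is essentially bookkeeping, and the only delicate point is keeping the two isomorphism reductions consistent on the token and node sides, which \Cref{cor:isomorphism_trafo} handles cleanly.
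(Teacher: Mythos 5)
Your proposal is correct and follows essentially the same route as the paper: reduce the general statement to the subgraph case via \Cref{cor:isomorphism_trafo} and then observe that the identical data (initial bijection and matching sequence) remains a valid solution when the hardware graph is enlarged and the algorithm graph is shrunk. The only cosmetic difference is that you apply the isomorphism transfer in two separate stages (hardware side first, algorithm side last) whereas the paper bundles both isomorphisms into a single application of \Cref{cor:isomorphism_trafo}; the substance is identical.
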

\begin{proof}
    It suffices to show that any solution to $(H, A)$ can be transformed into a solution to $(H', A')$ with the same number of steps and swaps.
    By assumption, $(H, A)$ is isomorphic to an instance $(\bar H, \bar A)$, where $\bar H$ is a subgraph of $H'$ and $A'$ is a subgraph of $\bar A$.
    Applying \Cref{cor:isomorphism_trafo} yields a solution to $(\bar H, \bar A)$ for any solution to $( H, A)$.
    The lemma now follows by noting that any solution to $(\bar H, \bar A)$ is also a solution to $(H', A')$.
\end{proof}
The natural extreme cases of \Cref{lem:subgraph_hierarchy} are the instances where $H$ is a tree and $A$ is the complete graph $K_n$.
In particular, to state upper bounds on the minimum number of swaps and steps of any instance on $n$ nodes, it suffices to derive such bounds only for the set of extreme instances $(H, A)$ where $H$ is a tree and $A=K_n$.

To construct an upper bound, we provide a constructive algorithm that returns solutions with few swaps.
We briefly explain the main idea behind the algorithm.
First, pick a node $v\in V$, perform a depth first search (DFS) starting from $v$, and then swap the token $p\in Q$ placed on $v$ along edges in the order traversed by the DFS algorithm.
The token $p$ becomes adjacent to all other tokens with this swap sequence.
Since the last node visited by DFS is a leaf, we can ``delete'' the leaf holding token $p$ from the remaining tree.
We can construct a solution by iterating until the tree has only two remaining nodes.
Note that DFS traverses each edge in the tree at most twice (see e.g. \cite{bondy2008graph}), which allows us to bound the number of swaps in the constructed solution.
A formal version of this algorithm is given in \Cref{alg:swap_algorithm}.

\begin{algorithm}
    \caption{DFS Swap Algorithm}\label{alg:swap_algorithm}
    \begin{algorithmic}[1]
        \Require An undirected tree $H$ with $n$ nodes.
        \Ensure A solution $S$ to the TMP instance $(H, K_n)$.
        \State Choose an arbitrary initial bijection $f_1$ and set $T \leftarrow H$.
        \For{$t=1,\dots, n-2$}
        \State Select a node $v_t$ holding the token $q_t$ of $T$ neighboring a leaf $l_t$ of $T$.
        \State \parbox[t]{\dimexpr\linewidth-\algorithmicindent}{
        Perform DFS on $T$ starting from $l_t$ and let $\ell: V(T) \mapsto [n-t+1]$ be the function where $\ell(v)$ is the number of nodes that were reached by DFS prior to $l_t$ plus one.
        }
        \For{$k=3,\dots, n-t+1$}
        \If{$\ell^{-1}(k)$ is not a leaf}
        \State Move the token $q_t$ to the node $\ell^{-1}(k)$ with swaps along the unique path in $T$.
        \EndIf
        \State $k \leftarrow k + 1$
        \EndFor
        \State Move the token $q_t$ to the adjacent leaf node $\ell^{-1}(n-t+1)$.
        \State $T\leftarrow T[V(T)\setminus \{\ell^{-1}(n-t+1)\}]$
        \EndFor
        \Return the constructed solution.
    \end{algorithmic}
\end{algorithm}
\begin{lemma}\label{thm:alg_swap_count}
For every undirected tree $H$ with $n$ nodes, \Cref{alg:swap_algorithm} returns a solution to the TMP instance $(H, K_n)$ with at most $(n-2)^2$ swaps.
\end{lemma}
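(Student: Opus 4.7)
The plan is to bound the number of swaps per iteration of \Cref{alg:swap_algorithm} and then sum. Setting $m \coloneqq n-t+1$ for the number of nodes of the tree $T$ at the start of iteration $t$, I would show that iteration $t$ contributes at most $2m-5$ swaps. Summing then gives
$\sum_{t=1}^{n-2}\bigl(2(n-t+1)-5\bigr) = (n-2)(2n-3) - (n-2)(n-1) = (n-2)^2$, proving the lemma.

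First I would recast iteration $t$ as a walk in the tree $T$. Starting at $v_t = \ell^{-1}(2)$, the token $q_t$ is moved along unique tree paths through the non-leaves among $\ell^{-1}(3), \ldots, \ell^{-1}(m-1)$ in DFS order, and finally to $\ell^{-1}(m)$. Hence the swap count of iteration $t$ equals $\sum_i d_T(u_i, u_{i+1})$ for the sequence $u_0 = v_t, u_1, u_2, \ldots$ of visited positions. By the triangle inequality applied to the tree metric $d_T$, this sum is bounded by the analogous sum over the full DFS sequence $\ell^{-1}(2), \ell^{-1}(3), \ldots, \ell^{-1}(m)$, which in turn equals the length of the DFS walk visiting every node of $T$ in DFS order from $\ell^{-1}(2)$ to $\ell^{-1}(m)$ without the terminal backtrack.

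The central step is to bound the length of this DFS walk. A standard DFS from $l_t$ traverses each of the $m-1$ edges of $T$ at most twice, and traverses each edge on the unique $l_t$-to-$\ell^{-1}(m)$ path in $T$ exactly once (due to the missing final backtrack). Therefore the DFS walk from $\ell^{-1}(1) = l_t$ to $\ell^{-1}(m)$ has length $2(m-1) - d$, where $d$ denotes the depth of $\ell^{-1}(m)$ in the DFS tree rooted at $l_t$. Dropping the initial edge $l_t \to v_t$ then gives a walk of length $2m - 3 - d$ from $v_t$. Since $l_t$ is a leaf whose unique neighbor in $T$ is $v_t$ and $\ell^{-1}(m)$ is a leaf different from $l_t$ for $m \geq 3$, the node $\ell^{-1}(m)$ must be a proper descendant of $v_t$ in the DFS tree, so $d \geq 2$ and the walk length is at most $2m - 5$.

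The main obstacle is the tight accounting in this third step: a naive bound of $2(m-1)$ swaps per iteration yields only $n(n-1) - 2$ total swaps, whereas obtaining $(n-2)^2$ requires saving three edge traversals per iteration. These three savings correspond exactly to starting DFS at a leaf (one) and terminating at a non-root leaf of depth at least two (two more), which is precisely what the algorithm's choice of DFS root enforces.
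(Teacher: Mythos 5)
Your swap-count argument is correct and follows essentially the same route as the paper: bound each iteration by the DFS edge-traversal count, i.e.\ twice the number of edges minus three traversals saved because the walk starts at the leaf $l_t$ and ends at a leaf of depth at least two, and then sum to obtain $(n-2)^2$. The only omission is the (routine) feasibility half of the lemma --- that each iteration makes $q_t$ adjacent to every remaining token, so that all connections of $K_n$ eventually satisfy the adjacency condition --- which the paper dispatches with a short induction on the size of $T$.
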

\begin{proof}
    Without loss of generality, let $n\geq 3$.
    First, we will prove the correctness of \Cref{alg:swap_algorithm}.
    Consider step $t=1$.
    With lines 5-8, token $q_1$ becomes adjacent to every other token, and all connections with $q_1$ satisfy the adjacency condition.
    Note that line 9 is correct since the last node visited by DFS is a leaf.
    By induction on the number of nodes in the tree $T$, all connections with token $q_1,\dots,q_{n-2}$ satisfy the adjacency condition, and the remaining connection satisfies the adjacency condition in the last step.

    For all $t\in[n-2]$, each of the $n-t$ edges of $T$ is traversed at most twice.
    The edge connecting $v_t$ with the leaf $l_t$ is not traversed at all, and the edge used to move $q_t$ to the final leaf is traversed only once.
    Thus, a bound on the number of swaps of the returned solution is given by $\sum_{t=1}^{n-2}(2(n-t-1)-1)= (n-2)^2$.
\end{proof}
\begin{figure}
\centering
\includegraphics[page=6, width=0.7\textwidth]{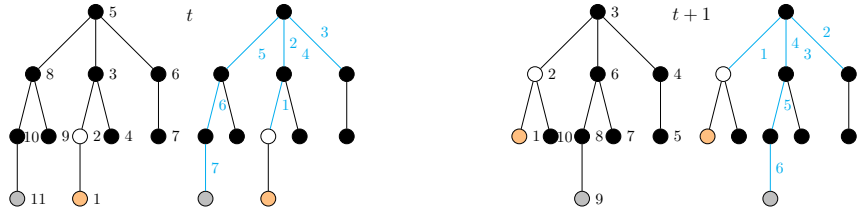}
\caption{
    Illustration of two steps of \Cref{alg:swap_algorithm}.
    For each step $t$, the nodes $v_t$ (white), $l_t$ (orange) and the node labels $\ell$ are shown on the left.
    Each edge involved in a swap is highlighted in blue, with labels denoting the order in which the edge swaps are applied to move $q_t$ from $v_t$ to the end node (gray).
}
\label{fig:quadratic_swap_example}
\end{figure}
As a corollary of \Cref{thm:alg_swap_count}, we obtain the first main result of this section.
\begin{theorem}\label{result:quad_upper_bound}
    Every TMP instance $(H,A)$ has a solution with $\mathcal{O}(n^2)$ swaps and $\mathcal{O}(n^2)$ steps.
\end{theorem}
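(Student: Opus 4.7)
The plan is to reduce to the extreme instances singled out by \Cref{lem:subgraph_hierarchy}---those where $H$ is a spanning tree and $A = K_n$---and then apply the DFS Swap Algorithm.

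First, I would assume without loss of generality that $H$ is connected. If $H$ is disconnected and some connection of $A$ joins two components, the instance is infeasible; otherwise the instance decomposes into independent subinstances, one per connected component, and the argument applies to each. With $H$ connected, pick any spanning tree $T$ of $H$. Since $T$ is a spanning subgraph of $H$ and $A$ is a spanning subgraph of $K_n$, \Cref{lem:subgraph_hierarchy} (invoked with the lemma's $H'$ being our $H$, the lemma's $H$ being $T$, the lemma's $A$ being $K_n$, and the lemma's $A'$ being $A$) yields
\[
    \ML(H, A, t) \le \ML(T, K_n, t) \quad \text{for every } t \in \N.
\]
Minimising over $t$ gives $\ML(H, A) \le \ML(T, K_n)$.

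Next, I would invoke \Cref{thm:alg_swap_count}, which shows that \Cref{alg:swap_algorithm} applied to $T$ returns a solution to $(T, K_n)$ with at most $(n-2)^2$ swaps, so $\ML(T, K_n) \le (n-2)^2$. Chaining the inequalities yields $\ML(H, A) \le (n-2)^2 = \mathcal{O}(n^2)$. For the step count, \Cref{lem:mt_ml_bounds} immediately gives $\MT(H, A) \le \ML(H, A) = \mathcal{O}(n^2)$, and the theorem follows.

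The argument collapses to a short chain of inequalities once the two preceding results are in hand, so there is no serious obstacle. The only point requiring mild care is the orientation of \Cref{lem:subgraph_hierarchy}: one must enlarge $H$ (here from $T$ up to the original hardware graph) and shrink $A$ (here from $K_n$ down to the original algorithm graph) in order to obtain an \emph{upper} bound on the swap count, since a solution to the tree-on-$K_n$ extreme instance remains valid when $H$ becomes denser and $A$ sparser. Swapping this orientation would give a bound in the wrong direction.
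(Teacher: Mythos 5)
Your proposal is correct and follows essentially the same route as the paper: reduce to the extreme instances $(T, K_n)$ with $T$ a spanning tree via \Cref{lem:subgraph_hierarchy}, then apply \Cref{thm:alg_swap_count}. The only cosmetic remark is that to exhibit a \emph{single} solution with both $\mathcal{O}(n^2)$ swaps and $\mathcal{O}(n^2)$ steps, it is cleanest to observe that the solution returned by \Cref{alg:swap_algorithm} performs one swap per step (so its step count equals its swap count), rather than bounding $\MT$ and $\ML$ separately via \Cref{lem:mt_ml_bounds}; but this is immediate and does not affect correctness.
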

An example for an iteration of \Cref{alg:swap_algorithm} is given in \Cref{fig:quadratic_swap_example}.
We note that \Cref{alg:swap_algorithm} is inherently sequential in the sense that it constructs a solution with exactly one swap per step.
We further note that \Cref{alg:swap_algorithm} is not optimized for practical performance.
The practical performance could be improved, for example, by reducing the number of steps in the solution by swapping two tokens along the DFS path at the same time.
However, this would not improve the asymptotics of \Cref{alg:swap_algorithm}.

\subsection{Lower Bounds}
In this subsection, we prove lower bounds on the number of swaps and steps for several classes of TMP instances.
To obtain a lower bound on the number of swaps of a solution, we first define a key parameter.
\begin{definition}\label{def:bound_swaps}
    For a graph $H=(V, E)$, we define
    \begin{equation*}
        \Delta'(H):= \max \{|N(i) \cup N(j)|-|N(i)\cap N(j)|-2: \{i,j\} \in E\},
    \end{equation*}
    where $N(i)\subseteq V$ denotes the set of neighbors of node $i\in V$.
\end{definition}
It follows immediately that
\[
\Delta'(H)\leq \max \{\deg(i) + \deg(j) - 2: \{i,j\} \in E\} \leq 2\Delta(H)-2,
\]
where $\deg(i)$ denotes the degree of node $i$ and $\Delta(H)$ is the maximum degree of $H$.
We obtain the following lower bound on the number of swaps.
\begin{lemma}\label{lem:lower_swap_bound}
    For every TMP instance $(H, A)$, we have
    \begin{equation*}
        \ML(H, A) \geq \dfrac{|C|-|E|}{\Delta'(H)}.
    \end{equation*}
\end{lemma}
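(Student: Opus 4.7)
The plan is to track the evolution of the set of token pairs that are currently placed on adjacent nodes of $H$ across the steps of a solution, and to bound how many previously unseen such pairs a single swap can create. Fix a solution $(f_1, \mathcal{S})$ with $\mathcal{S} = (M_1, \ldots, M_k)$ and, for $t \in [k+1]$, define $X_t := \{\{p,q\} \subseteq Q : \{f_t(p), f_t(q)\} \in E\}$, the set of adjacent token pairs at time $t$. Since the adjacency condition for each connection must hold at some step, $C \subseteq \bigcup_{t=1}^{k+1} X_t$. Also, $|X_1| \leq |E|$ because each edge of $H$ hosts at most one pair of adjacent tokens.

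The key step is to show that a single swap along an edge $\{i,j\} \in E$ creates at most $\Delta'(H)$ new adjacent token pairs. Let $p$ and $q$ denote the tokens currently sitting at $i$ and $j$. Token pairs $\{r, s\}$ with $\{r, s\} \cap \{p, q\} = \emptyset$ are unaffected by the swap, and the pair $\{p, q\}$ is hosted by the edge $\{i, j\}$ both before and after. Hence any newly adjacent pair must have the form $\{p, r\}$ or $\{q, r\}$ for a third token $r$ at some node $v \notin \{i, j\}$. Such an $r$ produces a new adjacency exactly when $v \in N(j) \setminus N(i)$ (then $p$, now at $j$, becomes adjacent to $r$) or $v \in N(i) \setminus N(j)$ (then $q$, now at $i$, becomes adjacent to $r$); in each case exactly one new pair arises, and the two cases are disjoint. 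The vertices $i$ and $j$ themselves lie in $N(i) \triangle N(j)$ but host no such third token, yielding the bound
\[
|(N(i) \triangle N(j)) \setminus \{i, j\}| = |N(i) \cup N(j)| - |N(i) \cap N(j)| - 2 \leq \Delta'(H).
\]

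To conclude, I would apply this estimate swap-by-swap. Because the edges in a matching $M_t$ are pairwise disjoint, the swaps in $M_t$ may be executed one after another, each contributing at most $\Delta'(H)$ new pairs to the running union of adjacent-pair sets. Summing over all $k$ layers,
\[
|C| \leq \left|\bigcup_{t=1}^{k+1} X_t\right| \leq |X_1| + \sum_{t=1}^{k} |M_t| \cdot \Delta'(H) \leq |E| + \Delta'(H) \cdot \ML(H, A),
\]
and rearranging gives the claim. The main obstacle is the per-swap count: one must verify that $i$ and $j$ appear in $N(i) \triangle N(j)$ yet cannot generate a new token-adjacency (since they host no third token), which is exactly what the subtraction of $2$ in the definition of $\Delta'(H)$ accounts for; the remainder is bookkeeping.
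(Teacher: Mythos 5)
Your proposal is correct and follows essentially the same argument as the paper: bound the number of connections satisfied by the initial placement by $|E|$, show that a single swap along $\{i,j\}$ creates at most $|N(i)\cup N(j)|-|N(i)\cap N(j)|-2\leq \Delta'(H)$ newly adjacent pairs, and sum over all swaps. Your explicit bookkeeping with the sets $X_t$ and the sequentialization of each matching is a slightly more careful write-up of the same idea.
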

\begin{proof}
    Fix an arbitrary solution $S=(f_1,(M_1,\dots, M_k))$.
    With the first bijection of $S$, at most $|E|$ connections satisfy the adjacency condition.
    Now, we show that at most $\Delta'(H)$ connections can newly satisfy the adjacency condition with a single swap.
    Observe that swapping a token $p$ from node $i$ to $j$ and token $q$ from node $j$ to $i$ makes $p$ adjacent to the tokens placed on the neighbors $N(j)$ of $j$, and $q$ to the tokens placed on the neighbors $N(i)$ of $i$.
    Since the connection $\{p,q\}$ and all connections $\{p,r\},\{q,r\}$ for tokens $r$ placed on nodes in $N(i)\cap N(j)$ satisfied the adjacency condition before the swap, at most $|N(i)|+|N(j)|-2|N(i)\cap N(j)|-2\leq \Delta'(H)$ additional connections satisfy the adjacency condition after the swap.
    Thus, $S$ has at least $(|C|-|E|)/\Delta'(H)$ swaps.
\end{proof}
We also obtain a lower bound on the number of steps if $A$ is the complete graph on $n$ nodes.

\begin{proposition}\label{lem:lower_bounds_by_max_degree}
    Let $(H_n)_{n\in \N_{\geq 1}}$ be a family of connected graphs such that $H_n$ has maximum degree $\Delta_{n}$.
    Then,
    \begin{align*}
        \ML(H_n, K_n)\geq \dfrac{n(n-\Delta_n-1)}{4\Delta_n - 4} \quad\text{ and }\quad \MT(H_n, K_n)\geq \dfrac{n-\Delta_n-1}{2\Delta_n - 2}.
    \end{align*}
\end{proposition}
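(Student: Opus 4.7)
The plan is to apply \Cref{lem:lower_swap_bound} to the instance $(H_n, K_n)$ after substituting explicit estimates for $|C|$, $|E(H_n)|$ and $\Delta'(H_n)$, and in parallel to refine the counting argument underlying that lemma from a per-swap to a per-step statement.

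For the swap bound, I would plug in $|C| = \binom{n}{2} = n(n-1)/2$, $|E(H_n)| \leq n\Delta_n/2$ (handshaking, since every node has degree at most $\Delta_n$), and $\Delta'(H_n) \leq 2\Delta_n - 2$. The last estimate follows directly from \Cref{def:bound_swaps} via
\[
|N(i)\cup N(j)| - |N(i)\cap N(j)| \;=\; \deg(i) + \deg(j) - 2|N(i)\cap N(j)| \;\leq\; 2\Delta_n.
\]
Substituting into \Cref{lem:lower_swap_bound} and simplifying gives
\[
\ML(H_n, K_n) \;\geq\; \frac{n(n-1)/2 - n\Delta_n/2}{2\Delta_n - 2} \;=\; \frac{n(n-\Delta_n-1)}{4\Delta_n - 4},
\]
which is the first claimed inequality.

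For the step bound, I would reuse the local observation from the proof of \Cref{lem:lower_swap_bound} but aggregate it across a matching. A single step applies a matching $M_t$ of at most $\lfloor n/2\rfloor$ pairwise vertex-disjoint swaps; since vertex-disjoint swaps commute, one may decompose the step into a sequence of $|M_t|$ individual swaps and apply the per-swap bound $\Delta'(H_n) \leq 2\Delta_n - 2$ to each of them in turn. Hence at most $\lfloor n/2\rfloor (2\Delta_n - 2) \leq n(\Delta_n - 1)$ connections can newly satisfy the adjacency condition per step, while the initial bijection $f_1$ satisfies at most $|E(H_n)| \leq n\Delta_n/2$ of them. Since all $\binom{n}{2}$ connections must eventually be satisfied, we obtain
\[
\frac{n\Delta_n}{2} + n(\Delta_n - 1)\cdot \MT(H_n, K_n) \;\geq\; \frac{n(n-1)}{2},
\]
which rearranges to $\MT(H_n, K_n) \geq (n-\Delta_n-1)/(2\Delta_n-2)$.

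I do not anticipate a significant obstacle here: the bounds on $|E(H_n)|$ and $\Delta'(H_n)$ are elementary, and lifting the per-swap count to a per-step count only requires observing that matching edges are vertex-disjoint, so the disjoint local neighborhoods analyzed in \Cref{lem:lower_swap_bound} do not interact and the bounds simply add.
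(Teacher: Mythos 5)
Your proposal is correct. The first half (the swap bound) is exactly the paper's argument: apply \Cref{lem:lower_swap_bound} with $|C|=n(n-1)/2$, $|E(H_n)|\leq n\Delta_n/2$ and $\Delta'(H_n)\leq 2\Delta_n-2$. For the step bound you take a slightly different route: the paper simply chains the already-established swap bound with the inequality $\MT(H,K_n)\geq \ML(H,K_n)\cdot\frac{2}{n}$ from \Cref{lem:mt_ml_bounds}, whereas you re-run the counting argument per step, bounding the number of newly satisfied connections in one step by $\lfloor n/2\rfloor(2\Delta_n-2)$. The two derivations are arithmetically equivalent (your denominator $n(\Delta_n-1)$ is exactly $\frac{n}{2}\cdot(2\Delta_n-2)$), so you land on the same bound; your version essentially anticipates the paper's later, sharper per-step lemma (\Cref{lem:lower_step_bound}). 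One caution on your closing remark: the justification is not that the ``local neighborhoods do not interact'' (matching edges can have adjacent endpoints, and the paper explicitly notes in the proof of \Cref{lem:lower_step_bound} that the refined $\Delta'$-term with $|N(i)\cap N(j)|$ does \emph{not} survive aggregation over a matching), but rather your sequential-decomposition argument: disjoint transpositions commute, intermediate bijections can only satisfy additional connections, and each individual swap contributes at most $\deg(i)+\deg(j)-2\leq 2\Delta_n-2$ new adjacencies. Since you only use this weaker per-swap bound, the step bound goes through.
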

\begin{proof}
    The lower bound on $\ML(H_n, K_n)$ follows from applying \Cref{lem:lower_swap_bound} to the instance $(H_n, K_n)$ and using $|E(H_n)|\leq n \Delta_n/2$, $|E(K_n)| = n(n-1)/2$ and $\Delta'(H_n)\leq 2\Delta_n-2$.
    The bound on $\MT(H_n, K_n)$ then follows from the inequality $\MT(H, K_n)\geq \ML(H, K_n) \cdot \frac{2}{n}$ of \Cref{lem:mt_ml_bounds}.
\end{proof}

Using \Cref{lem:lower_bounds_by_max_degree}, we obtain our second main result, \Cref{result:existence_of_quad_swap_instances}, which shows that the upper bound of $\mathcal{O}(n^2)$ swaps in \Cref{result:quad_upper_bound} is asymptotically tight.
\begin{theorem}\label{result:existence_of_quad_swap_instances}
There are families of connected graphs $(H_n)_{n\in \N_{\geq 1}}$ with $\ML(H_n, K_n)\in \Omega(n^2)$ and $\MT(H_n, K_n)\in \Omega(n)$.
\end{theorem}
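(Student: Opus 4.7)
The plan is to exhibit a concrete family of graphs with constant maximum degree and then simply invoke \Cref{lem:lower_bounds_by_max_degree}. Examining the bounds there, $\ML(H_n,K_n)\geq n(n-\Delta_n-1)/(4\Delta_n-4)$ and $\MT(H_n,K_n)\geq (n-\Delta_n-1)/(2\Delta_n-2)$, we see that if $\Delta_n$ can be kept bounded by a constant $\Delta$ independent of $n$, the right-hand sides immediately become $\Omega(n^2)$ and $\Omega(n)$ respectively. So the whole task reduces to naming a connected graph family on $n$ nodes whose maximum degree does not grow with $n$.

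The most straightforward choice is $H_n = P_n$, the path on $n$ nodes, which is connected and has $\Delta_n = 2$ for every $n\geq 3$. Substituting $\Delta_n = 2$ into \Cref{lem:lower_bounds_by_max_degree} yields
\[
\ML(P_n,K_n)\geq \frac{n(n-3)}{4}\quad\text{and}\quad \MT(P_n,K_n)\geq \frac{n-3}{2},
\]
which directly establishes $\ML(P_n,K_n)\in \Omega(n^2)$ and $\MT(P_n,K_n)\in \Omega(n)$, proving the theorem. Equally, the cycle $C_n$ or any other family of bounded-degree connected graphs (e.g.\ $d$-regular expanders) would work.

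There is essentially no obstacle here: the preceding proposition has done all the heavy lifting, and the argument consists of observing that its lower bounds become tight in order once $\Delta_n$ is constant. The only minor care is ensuring the chosen family is connected (so that $\MT$ and $\ML$ are finite, matching the hypothesis of \Cref{lem:lower_bounds_by_max_degree}) and valid for all sufficiently large $n$ so that $n-\Delta_n-1>0$; both are trivially satisfied by $P_n$ for $n\geq 4$.
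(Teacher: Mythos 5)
Your proposal is correct and matches the paper's proof exactly: both invoke \Cref{lem:lower_bounds_by_max_degree} on a family of connected graphs with constant maximum degree, with the path graphs as the concrete example. Your version just spells out the substitution $\Delta_n=2$ explicitly, which the paper leaves implicit.
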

\begin{proof}
    Consider any family of connected graphs where all members have a maximum degree bounded by a constant, such as the family of path graphs. \Cref{lem:lower_bounds_by_max_degree} then implies the statement.
\end{proof}

In \Cref{lem:lower_bounds_by_max_degree}, we constructed a lower bound on $\MT(H, K_n)$ based on a lower bound on the minimum number of swaps $\ML(H, K_n)$.
However, it is also possible to construct a lower bound on $\MT(H, A)$ directly, based on a similar reasoning as in \Cref{lem:lower_swap_bound}.
\begin{lemma}\label{lem:lower_step_bound}
    For every TMP instance $(H, A)$, we have
    \begin{equation*}
        \MT(H, A) \geq \dfrac{|C|-|E|}{D(H)},
    \end{equation*}
    where
    \begin{equation*}
        D(H)=\max_{K\in [\lfloor n/2\rfloor]}\left(\min\left(\sum_{k=1}^{K}(d_{2k-1} + d_{2k} - 2), |E|-K\right)\right)
    \end{equation*}
    and $d_1,\dots, d_n$ is the non-increasing degree sequence of $H$.
\end{lemma}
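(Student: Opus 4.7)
The plan is to extend the per-swap bookkeeping of \Cref{lem:lower_swap_bound} to a per-\emph{step} bookkeeping that exploits the matching structure of each $M_t$. Fix an arbitrary solution $S=(f_1,(M_1,\dots,M_k))$ with $k=\MT(H,A)$ and, for $t\in\{1,\dots,k+1\}$, call a connection \emph{satisfied at time $t$} if $\{f_t(p),f_t(q)\}\in E$. Every connection in $C$ must be satisfied at some time, so it suffices to bound the number of connections that are satisfied at time $t+1$ but not at time $t$ (a strictly larger quantity than ``satisfied at $t+1$ for the first time''). At most $|E|$ connections are satisfied at time $1$, and I will show that each subsequent step adds at most $D(H)$ to the running total, which yields $|C|\le |E|+\MT(H,A)\cdot D(H)$ and hence the lemma.

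To bound the per-step contribution, fix $t$ and let $K=|M_t|$. For each edge $e\in E$, write $P_t(e)$ for the unordered pair of tokens placed on its endpoints by $f_t$. A newly-satisfied connection at step $t$ must be witnessed by some edge $e$ with $P_{t+1}(e)\ne P_t(e)$ and $P_{t+1}(e)\in C$. Two observations give the two terms inside $D(H)$. First, if $e\in M_t$ then the swap on $e$ merely exchanges its two endpoints, so $P_{t+1}(e)=P_t(e)$; consequently only the $|E|-K$ edges in $E\setminus M_t$ can witness a new satisfaction. Second, if $e\in E\setminus M_t$ changes pair, then $e$ must be incident to some endpoint of some swap in $M_t$. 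For a fixed swap $\{u,v\}\in M_t$ the edges incident to $u$ or $v$ but different from $\{u,v\}$ number exactly $\deg(u)+\deg(v)-2$, so summing over $M_t$ bounds the total count of witnessing edges by $\sum_{\{u,v\}\in M_t}(\deg(u)+\deg(v)-2)$ (double counting across different swaps is fine for an upper bound). Since $M_t$ uses $2K$ distinct vertices, this sum is maximised by picking the $2K$ largest entries of the degree sequence, giving $\sum_{j=1}^{K}(d_{2j-1}+d_{2j}-2)$.

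Combining the two bounds, the number of newly-satisfied connections in step $t$ is at most $\min\bigl(|E|-K,\ \sum_{j=1}^{K}(d_{2j-1}+d_{2j}-2)\bigr)\le D(H)$, where the last inequality is just the definition of $D(H)$: the max over $K$ is a universal bound that is valid for whatever matching size step $t$ happens to use. Summing over the $k$ steps completes the argument. The only non-routine point I anticipate is being careful about what counts as ``newly satisfied'' — using the generous ``satisfied at $t+1$ but not at $t$'' convention avoids having to track a connection that may alternate between satisfied and unsatisfied, and it does not weaken the conclusion because we are proving a lower bound on $\MT(H,A)$.
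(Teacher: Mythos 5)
Your proposal is correct and follows essentially the same argument as the paper: at most $|E|$ connections are adjacent under $f_1$, each step newly satisfies at most $\min\bigl(\sum_{\{u,v\}\in M_t}(\deg(u)+\deg(v)-2),\,|E|-|M_t|\bigr)\leq D(H)$ connections, and the degree sum is bounded via the $2K$ largest entries of the sorted degree sequence. The only (harmless) difference is bookkeeping: you count witnessing \emph{edges} of $H$ whose token pair changes, whereas the paper counts candidate newly adjacent connections per moved \emph{token}; the two tallies coincide at $\deg(u)+\deg(v)-2$ per swap.
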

\begin{proof}
    Let $S$ be an arbitrary solution.
    With the first bijection $f_1$ of $S$, at most $|E|$ connections satisfy the adjacency condition.
    As in \Cref{lem:lower_swap_bound}, we now construct a bound on the number of connections that can newly satisfy the adjacency condition with a single step.
    Let $M$ be a matching of the solution $S$ and let $\{i,j\}\in M$.
    Let $p$ be the token that moves from node $i$ to $j$ and is swapped with the token $q$.
    Then, unlike in the proof of \Cref{lem:lower_swap_bound}, where we considered the neighbors of $p$ after only a single swap, there are now $\deg(j)-1$ candidate tokens that could be newly adjacent to the token $p$, since all nodes in $N(i)\cap N(j)$ can also be involved in a swap.
    Similarly, token $q$ is an element of at most $\deg(i)-1$ newly adjacent connections.
    As any newly adjacent connection must contain a token which is contained in the matching $M$ and each token-pair that is placed on an edge of the matching cannot be newly adjacent, we obtain that at most
    \begin{align*}
        \min\{\sum_{\{i,j\} \in M}(\deg(i)+\deg(j)-2), |E|-|M|\}
    \end{align*}
    connections newly satisfy the adjacency condition.
    With $d_1,\dots, d_n$ being the non-increasing degree sequence of $H$, we obtain that in a single step at most 
    \begin{align*}
        &\max\{\min\{\sum_{\{i,j\} \in M}(\deg(i)+\deg(j)-2), |E|-|M|\}: M \text{ is a matching of }H\}\\
        \leq
        &\max_{K\in [\lfloor n/2\rfloor]}\left(\min\left(\sum_{k=1}^{K}(d_{2k-1} + d_{2k} - 2), |E|-K\right)\right)=D(H)
    \end{align*}
    connections newly satisfy the adjacency condition.
    Thus, $S$ has at least $(|C|-|E|)/D(H)$ steps.
\end{proof}
We use \Cref{lem:lower_step_bound} to extend a recent result by Weidenfeller et al.~\cite{Weidenfeller_2022} whose result implies that $\MT(P_n,K_n)=n-2$, where $P_n$ is the path graph on $n$ nodes.
We show that $\Omega(n)$ many steps are necessary for any instance $(H, K_n)$, where $H$ is a tree on $n$ nodes.
\begin{corollary}\label{cor:lower_step_bound_trees}
    Let $H=(V, E)$ be a tree on $n$ nodes.
    Then
    \begin{equation*}
        \MT(H, K_n)\geq \dfrac{n-1}{2}.
    \end{equation*}
\end{corollary}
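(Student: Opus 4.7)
The plan is to apply \Cref{lem:lower_step_bound} to the instance $(H, K_n)$ and obtain the bound by controlling $D(H)$ from above by $n-2$. First I would compute the two trivial quantities: since $A = K_n$ we have $|C| = \binom{n}{2}$, and since $H$ is a tree we have $|E| = n-1$, hence $|C| - |E| = (n-1)(n-2)/2$.

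The heart of the argument is an upper bound $D(H) \leq n-2$. The nice observation is that this does not require any subtle analysis of the degree sequence of the tree: the second term in the minimum defining $D(H)$ is $|E| - K = n - 1 - K$, and since $K$ ranges over $[\lfloor n/2 \rfloor]$ we have $K \geq 1$, so $|E| - K \leq n - 2$ for every admissible $K$. Therefore
\[
\min\Bigl(\sum_{k=1}^{K}(d_{2k-1} + d_{2k} - 2),\; |E| - K\Bigr) \leq n - 2
\]
for every $K \in [\lfloor n/2\rfloor]$, and taking the maximum over $K$ yields $D(H) \leq n-2$.

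Plugging both quantities into \Cref{lem:lower_step_bound} then gives
\[
\MT(H, K_n) \;\geq\; \frac{|C| - |E|}{D(H)} \;\geq\; \frac{(n-1)(n-2)/2}{n-2} \;=\; \frac{n-1}{2},
\]
valid for $n \geq 3$ so that $D(H) > 0$; the cases $n \in \{1,2\}$ are trivial since the right-hand side is at most $1/2$ and $\MT$ is a nonnegative integer. I do not expect any real obstacle here: the only thing to notice is that the combinatorially heavy degree-sequence term in $D(H)$ is irrelevant, because the crude bound $|E| - K \leq |E| - 1$ from the matching-size term of the minimum is already tight enough for the asymptotic $\Omega(n)$ lower bound we are after.
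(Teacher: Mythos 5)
Your proof is correct and follows essentially the same route as the paper: both bound $D(H)\leq |E|-K\leq |E|-1=n-2$ using only the matching-size term of the minimum, then substitute $|C|-|E|=(n-1)(n-2)/2$ into \Cref{lem:lower_step_bound}. Your explicit treatment of the degenerate cases $n\in\{1,2\}$ is a small extra care the paper omits, but the argument is otherwise identical.
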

\begin{proof}
    We have $D(H)\leq |E|-1= n-2$ and $|E(K_n)| = n(n-1)/2$.
    By \Cref{lem:lower_step_bound}, we have
    \begin{equation*}
        \MT(H, K_n)
        \geq \dfrac{|E(K_n)|-|E|}{D(H)}
        \geq \dfrac{|E(K_n)|-|E|}{|E|-1}
        \geq \dfrac{n(n-1)-2(n-1)}{2(n-2)}
        =\dfrac{n-1}{2}.
    \end{equation*}
\end{proof}
A natural question arising from \Cref{cor:lower_step_bound_trees} is whether this bound is asymptotically tight, that is, whether $\MT(H, K_n)\in \mathcal{O}(n)$ holds if $H$ is a tree.
Weidenfeller et al.~\cite{Weidenfeller_2022} showed that this is the case for path graphs.
It is not hard to see that this is also true for star graphs:
We have $\MT(K_{1,n-1}, K_n)=n-2$, where $K_{1,n-1}$ is the star graph on $n$ nodes.

It remains an open question whether or not $\MT(H_n, K_n)\in \Theta(n)$ holds for all families of trees $(H_n)_{n\in \N_{\geq 1}}$.
Computational experiments reveal that $\MT(H_n, K_n)\in\{n-2,n-1\}$ holds for all trees with up to 10 nodes.
Therefore, we conjecture the following.
\begin{conjecture}\label{conjecture:tree_steps}
    We have $\MT(H_n, K_n)\in \Theta(n)$ for every family of trees $(H_n)_{n\in \N_{\geq 1}}$.
\end{conjecture}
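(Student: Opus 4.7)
The lower bound $\MT(H_n, K_n) \in \Omega(n)$ is already furnished by \Cref{cor:lower_step_bound_trees}, so the conjecture reduces to the matching upper bound $\MT(H_n, K_n) \in \mathcal{O}(n)$. The plan is to exhibit, for an arbitrary tree $H_n$ with $n$ nodes, an explicit schedule of $\mathcal{O}(n)$ matchings that realises all $\binom{n}{2}$ pairwise meetings, thereby generalising the sharp bounds $\MT(P_n, K_n) = n-2$ for paths and $\MT(K_{1,n-1}, K_n) = n-2$ for stars that are cited immediately before the conjecture.

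My first attempt would be a centroid recursion. Picking a centroid $c$ of $H_n$ and deleting it splits $H_n$ into subtrees $T_1,\dots,T_r$, each of size at most $n/2$. I would recursively produce schedules for each $(T_i, K_{|T_i|})$ and interleave them with a \emph{shuffle} phase in which every token visits every subtree by being cycled along a closed walk through $c$ that spans the tree. A naive implementation yields the recursion $T(n) = T(n/2) + \mathcal{O}(n)$ and hence only $\mathcal{O}(n\log n)$ steps, which is a strong improvement over \Cref{result:quad_upper_bound} but still falls short of the conjectured $\Theta(n)$.

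To shave off the logarithmic factor I would try to exploit a heavy-light decomposition, so that only the heavy child of each internal vertex is traversed by a full shuffle phase and the amortised shuffle cost per recursion level telescopes into a geometric series. Equivalently, one can attempt to parallelise \Cref{alg:swap_algorithm} directly: several leaf-deletion iterations should be executable in the same layer whenever they involve edge-disjoint portions of the tree. The combinatorial content underlying either route is the same \emph{packing lemma}: the $\mathcal{O}(n)$ root-to-leaf swap paths used by the algorithm can be partitioned into $\mathcal{O}(1)$ matchings per phase, so that each linear-cost phase of the sequential algorithm collapses to a constant number of matching layers.

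The main obstacle is precisely this packing lemma. In trees with many short branches attached to a common vertex, parallel leaf-deletions all compete for the edges incident to that vertex, so no bounded-size matching can accommodate them simultaneously; bounding the resulting edge congestion seems to require either an LP-based rounding of a fractional schedule, or a delicate case analysis driven by the local structure of $H_n$ near its high-degree vertices. The computational evidence that $\MT(H_n, K_n)\in\{n-2,n-1\}$ for every tree on at most ten nodes strongly supports the conjecture with a small constant, but converting this evidence into a rigorous argument for unbounded families of trees — without picking up an extra $\log n$ factor from the recursion depth — is where I expect the difficulty to sit.
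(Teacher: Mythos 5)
The statement you are addressing is \Cref{conjecture:tree_steps}, which the paper explicitly poses as an \emph{open problem}: there is no proof in the paper to compare against, and the authors' own partial progress reaches only $\MT(\text{QSST}_n,K_n)\in\mathcal{O}(n^{3/2})$ for quadratic subdivided stars, a rather structured subclass of trees. Your reduction of the conjecture to the upper bound $\MT(H_n,K_n)\in\mathcal{O}(n)$ is correct, since \Cref{cor:lower_step_bound_trees} already supplies the $\Omega(n)$ lower bound; but what you submit is a programme rather than a proof, and you name the missing piece yourself (the ``packing lemma'').

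Beyond that acknowledged gap, even the intermediate $\mathcal{O}(n\log n)$ claim from the centroid recursion is not justified as written. The shuffle phase, in which tokens are ``cycled along a closed walk through $c$ that spans the tree,'' does not obviously make every cross-subtree pair of tokens adjacent at some time: rotating tokens along a closed spanning walk preserves their cyclic order on that walk, and two tokens occupying consecutive positions of the walk need not occupy adjacent nodes of $H_n$ once the walk revisits vertices. Moreover, two tokens residing in distinct subtrees $T_i,T_j$ can only become adjacent if one of them crosses into (or sits at) the centroid region, so all $\Theta(n^2)$ cross-subtree meetings are funnelled through the $\deg(c)$ edges at $c$; nothing in your sketch bounds the cost of this phase by $\mathcal{O}(n)$, and the reasoning of \Cref{lem:lower_step_bound} only shows it cannot be \emph{less} than $\Omega(n)$. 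So both the per-level cost of the recursion and the removal of the $\log n$ factor remain unproven. For context, a correct $\mathcal{O}(n\log n)$ bound for all trees would already improve on everything the paper establishes (its general upper bound is the $\mathcal{O}(n^2)$ of \Cref{result:quad_upper_bound}), so if you can make the shuffle phase rigorous that partial result is worth writing up on its own; the full conjecture, however, remains open.
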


A \emph{quadratic subdivided star} $\text{QSST}_n$ is a tree defined for values of $n$ where there exists a $m\in \N$ with $n=m^2+1$.
$\text{QSST}_n$ consists of a root node that has $m$ added paths of length $m$.
In our computational experiments, the only tree with $n=9$ nodes that required $n-1=8$ steps was a quadratic subdivided star with one leaf removed.
Thus, considering this instance class might be useful for proving or disproving \Cref{conjecture:tree_steps}.
Although it remains an open problem whether $\mathcal{O}(n)$ steps suffice to solve the TMP instance class $(\text{QSST}_n, K_n)_{n\in\mathbb{N}}$, we improve the trivial upper bound of $\mathcal{O}(n^2)$ steps of \Cref{result:quad_upper_bound} to $\mathcal{O}(n^{3/2})$ steps.
\begin{theorem}
    \begin{equation*}
        \MT(\text{\normalfont QSST}_n,K_n)\in\mathcal{O}(n^{3/2})
    \end{equation*}
\end{theorem}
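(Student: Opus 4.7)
The plan is to construct a solution for $(\text{QSST}_n,K_n)$ with $\mathcal{O}(n^{3/2})$ matchings by decomposing the tree into $\binom{m}{2}$ overlapping paths, where $m=\sqrt{n-1}$. Let $r$ be the root and $P_1,\dots,P_m$ be the arms of $\text{QSST}_n$, each a path of $m$ non-root nodes. For every pair $1\le i<j\le m$, the subgraph $G_{ij}$ induced by $P_i\cup\{r\}\cup P_j$ is a path on $2m+1$ nodes embedded in $\text{QSST}_n$. The idea is to process pairs of arms one after another, each time invoking the known bound $\MT(P_N,K_N)=N-2$ for paths.

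For each pair $(i,j)$ I would form a block of matchings as follows. First, invoke $\MT(P_{2m+1},K_{2m+1})=2m-1$ from Weidenfeller et al.~\cite{Weidenfeller_2022} to obtain a sequence $\mathcal{M}^{ij}=(M_1^{ij},\dots,M_{2m-1}^{ij})$ of matchings on $G_{ij}$ that makes every pair of tokens currently residing on $G_{ij}$ satisfy the adjacency condition at some step. Second, append the reverse sequence $(M_{2m-1}^{ij},\dots,M_1^{ij})$; since every $\pi_{M_t^{ij}}$ is an involution, this second half realises the inverse of the permutation produced by the first half and therefore restores each token on $G_{ij}$ to its position at the start of the block. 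Importantly, no edge outside $G_{ij}$ is used, so tokens on the other arms are untouched during this block. Concatenating these blocks over all $\binom{m}{2}$ pairs $(i,j)$ yields the candidate solution.

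Correctness follows from the restoration property: between two consecutive blocks every token sits on the same vertex as at the beginning of the procedure, so the token originally placed on arm $P_i$ is still on $P_i$ whenever a block for a pair containing $i$ begins. Hence for any two tokens $t,t'$ the block indexed by their original arms -- or any block containing $r$ if one of them is the root token -- places both on $G_{ij}$, and the Weidenfeller protocol inside that block guarantees that the adjacency condition for $\{t,t'\}$ is satisfied at some step. The total step count is $\binom{m}{2}(4m-2)\in\mathcal{O}(m^3)=\mathcal{O}(n^{3/2})$, giving the claimed bound.

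The main technical point, which is nevertheless elementary, is verifying that the reversed sequence truly inverts the first half: if the first half realises $\sigma=\pi_{M_{2m-1}^{ij}}\circ\cdots\circ\pi_{M_1^{ij}}$ on the token positions, then $\sigma^{-1}=\pi_{M_1^{ij}}\circ\cdots\circ\pi_{M_{2m-1}^{ij}}$ because each $\pi_{M_t^{ij}}$ equals its own inverse, and this is precisely the map generated by the recurrence~\eqref{eq:mapping} when the matchings are applied in reverse order. Beyond that, the argument relies only on the existence of the path protocol and on the locality of each block, so no further subtleties arise.
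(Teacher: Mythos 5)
Your proof is correct, but it takes a genuinely different route from the paper. You decompose $\text{QSST}_n$ into the $\binom{m}{2}$ root-through paths $G_{ij}\cong P_{2m+1}$ formed by pairs of arms, run the path protocol of Weidenfeller et al.\ on each (which indeed works from an arbitrary starting configuration, since $K_N$ is fully symmetric so the initial bijection can be absorbed into an automorphism of the algorithm graph), and then undo it by replaying the matchings in reverse order; since each $\pi_M$ is an involution, the block restores all token positions, so successive blocks compose cleanly and every pair of tokens sits on some $G_{ij}$ at the start of the relevant block. One small omission: two tokens on the \emph{same} arm $P_i$ have no block ``indexed by their arms'', but any block $G_{ij}$ with $j\neq i$ contains all of $P_i$ and handles them, so this is cosmetic. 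Your count $\binom{m}{2}(4m-2)=m(m-1)(2m-1)\in\mathcal{O}(m^3)=\mathcal{O}(n^{3/2})$ is right. The paper instead retires tokens one at a time: it runs ``path sequences'' on all $m$ arms \emph{in parallel} so that within $\mathcal{O}(m)$ steps every token visits a neighbour of the root, settling all connections of the token currently at the root; that token is then pushed into an arm, ends at a leaf, and is deleted, and the process repeats for $\Theta(m^2)$ iterations, again giving $\mathcal{O}(m^3)$ steps. Your version buys a cleaner black-box argument (path routing plus an undo trick) at the cost of a constant factor; the paper's parallel-arms construction exploits the simultaneity of the arms, which is the feature one would likely need to push the bound towards the conjectured $\Theta(n)$.
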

\begin{proof}
    We give an explicit solution to $(\text{QSST}_n,K_n)$ with $\mathcal{O}(n^{3/2})$ steps.
    Let $n=m^2+1$.
    We use the following observation:
    Given an arbitrary placement of $k$ tokens on the path graph with $k$ nodes, there is a sequence of $2k-3$ steps such that each token is placed on the upper leaf at least once, and the token previously placed on the upper leaf is placed on the bottom leaf after the sequence.
    We refer to this sequence as a \emph{path sequence}.
    See \Cref{fig:sequence_illustration} for an illustration of a path sequence.
    After an arbitrary initial placement that places some token $q_1$ on the root node, perform the path sequence for all paths in parallel.
    We refer to this as one \emph{iteration}.
    The first iteration requires $2(m-1)-3$ steps and guarantees that all tokens have been placed on a node adjacent to the root node.
    Therefore, all connections with token $q_1$ now satisfy the adjacency condition.
	Next, swap the token $q_1$ into one of the longest remaining paths and perform the next iteration.
	After the second iteration, the token $q_1$ is placed on a leaf and can be deleted from the tree.
	After $m$ iterations, the length of the longest remaining path is reduced by one.
    Inductively, by iterating until the length of the longest remaining path is one, we obtain a solution with $\sum_{k=2}^{m} m\cdot 2(k-1) = (m - 1)m^2\in \mathcal{O}(m^3)$ steps.
    
    \begin{figure}
    	\centering
        \includegraphics[page=7, width=0.325\textwidth]{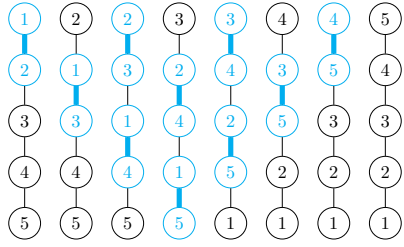}
    	\caption{
    		A sequence of 7 steps where each token is placed on the upper leaf at least once and the token 1, which was placed on the upper leaf at the start, is placed on the bottom leaf after the sequence.
    		For each step, the edges in the matching are highlighted in blue.
    	}
    	\label{fig:sequence_illustration}
    \end{figure}
\end{proof}

\subsection{Complexity}
Ito et al.~\cite{ito2023algorithmic} study the problem of qubit routing in a more general setting that includes both general qubit routing and qubit routing with commuting gates.
They prove that several special cases of qubit routing are $\mathcal{NP}$-hard.
We extend their results by showing that qubit routing with commuting gates is $\mathcal{NP}$-hard as well.
To this end, we consider the following decision problems.
\begin{problem}\textsc{Parallel Token Meeting Problem (PTMP)}\\
	\textbf{Input:} A TMP instance $(H, A)$ and a nonnegative integer $k\in \N$.\\
	\textbf{Question:} Is there a solution with at most $k$ steps?
\end{problem}
\begin{problem}\textsc{Sequential Token Meeting Problem (STMP)}\\
	\textbf{Input:} A TMP instance $(H, A)$ and a nonnegative integer $k\in \N$.\\
	\textbf{Question:} Is there a solution with at most $k$ swaps?
\end{problem}
It is easy to see that the PTMP and STMP are in $\mathcal{NP}$, since an optimal solution,
whose size is bounded polynomially in the number of nodes of the instance,
serves as a polynomial certificate for a yes-instance.
Additionally, by reducing from the subgraph isomorphism problem, we can show $\mathcal{NP}$-completeness for PTMP and STMP, which implies $\mathcal{NP}$-hardness of the optimization versions.
\begin{theorem}\label{thm:NPcompleteness}
	The \textsc{Sequential Token Meeting Problem} and the \textsc{Parallel Token Meeting Problem} are $\mathcal{NP}$-complete.
\end{theorem}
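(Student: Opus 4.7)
The plan is to reduce from the \textsc{Subgraph Isomorphism} problem, which is a classical $\mathcal{NP}$-complete problem: given two undirected graphs $G_1=(V_1,E_1)$ and $G_2=(V_2,E_2)$, decide whether $G_1$ is isomorphic to a subgraph of $G_2$. Since membership of PTMP and STMP in $\mathcal{NP}$ has already been observed in the text (a solution of polynomial size certifies a yes-instance, using for instance the explicit $\mathcal{O}(n^2)$-step and $\mathcal{O}(n^2)$-swap bounds of \Cref{result:quad_upper_bound}), it suffices to establish $\mathcal{NP}$-hardness by exhibiting a polynomial-time many-one reduction that works for both variants simultaneously.

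The key observation I would exploit is that the interesting case of both problems degenerates cleanly at $k=0$: a solution with $0$ steps (equivalently, $0$ swaps) exists for a TMP instance $(H,A)$ if and only if there is a bijection $f_1:Q\xrightarrow{1:1}V$ such that $\{f_1(p),f_1(q)\}\in E$ for every $\{p,q\}\in C$. This is exactly the condition that $A$ is isomorphic to a spanning subgraph of $H$. To handle the general subgraph isomorphism question, where $G_1$ can be smaller than $G_2$, I would pad the algorithm graph with isolated nodes. Concretely, given an instance $(G_1,G_2)$ of \textsc{Subgraph Isomorphism} with $|V_1|\leq |V_2|$, I set $H\define G_2$ and let $A$ be the graph obtained from $G_1$ by adding $|V_2|-|V_1|$ isolated nodes, and finally choose $k\define 0$. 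This construction is clearly polynomial in the input size.

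It remains to verify the equivalence. If $G_1$ embeds as a subgraph of $G_2$ via an injection $\phi:V_1\to V_2$, then extending $\phi$ to a bijection $f_1:V(A)\to V(H)$ (using the padded isolated tokens to cover the leftover nodes) produces a solution to the constructed TMP instance with $0$ steps and $0$ swaps, since the only adjacency conditions to check correspond to the edges of $G_1$, which are mapped into $E_2$ by $\phi$. Conversely, any valid TMP solution with $0$ steps consists solely of a bijection $f_1$ whose restriction to $V_1$ is an injection witnessing that $G_1$ is a subgraph of $G_2$. This equivalence is symmetric in the two complexity measures: a solution has $0$ steps iff it has $0$ swaps, so the same reduction simultaneously proves hardness of PTMP and STMP.

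The main technical concern — rather than obstacle — is the bijection requirement in the TMP definition, which forces $|Q|=|V|$; the padding with isolated tokens takes care of this cleanly, and one should briefly note that the isolated tokens do not introduce any adjacency condition, so they cannot affect feasibility. Everything else (polynomial time of the construction, validity of the reduction for both PTMP and STMP with the common threshold $k=0$, and the already-argued $\mathcal{NP}$ membership) is routine. Combining the reduction with the membership observation yields $\mathcal{NP}$-completeness of both PTMP and STMP, as claimed.
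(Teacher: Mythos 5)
Your reduction is the same in spirit as the paper's: both reduce \textsc{Subgraph Isomorphism} to the $k=0$ case via \Cref{lem:subgraph_isomorphism}-style reasoning, and both pad the pattern graph with isolated tokens so that a bijection $Q\xrightarrow{1:1}V$ can exist. However, you omit one step that the paper treats as mandatory: making the hardware graph connected. The paper explicitly notes that connectivity of the first graph ``is required for both STMP and PTMP'' (hardware graphs are connected, and the $\mathcal{O}(n^2)$ solution-size bound of \Cref{result:quad_upper_bound}, which underlies the $\mathcal{NP}$-membership certificate you invoke, presupposes that $H$ has a spanning tree). Your construction sets $H\define G_2$, which may be disconnected, so the instances you produce need not be legal inputs to STMP/PTMP as the authors intend them. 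The paper repairs this by adjoining a universal apex vertex $v_i^*$ to \emph{both} padded graphs: this keeps the vertex counts equal and makes both graphs connected, but it costs an extra argument, since an embedding of $G'_2$ into $G'_1$ might send $v_2^*$ to a universal vertex of $G'_1$ other than $v_1^*$, and one must ``exchange'' the two images to recover an embedding of $\hat G_2$ into $\hat G_1$. If one reads the formal definition of a TMP instance literally (no connectivity is demanded there), your reduction is sound as written and in fact slightly simpler; but to match the problem class the paper actually studies, you need the apex-vertex normalization together with the accompanying exchange argument. The rest of your write-up (the $k=0$ equivalence, the role of the isolated tokens, the observation that the same threshold works for steps and swaps simultaneously) coincides with the paper's proof.
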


A proof of \Cref{thm:NPcompleteness} is given in \Cref{app:complexity}.
We note here that we prove the hardness of both problems using the ``hardness'' of choosing the optimal first bijection.
It is an interesting open question whether Token Meeting remains $\mathcal{NP}$-hard if one fixes the first bijection.

\section{Integer Programming Models}
\label{sec:ip_models}

$\mathcal{NP}$-hard optimization problems are often solved by mixed-integer programming methods which can solve even large instances to proven optimality~\cite{Koch2022}.
In this section, we derive an integer programming model for the TMP whose feasible set is the set of all solutions with a fixed number of steps $T\in \N$.
Thus, the integer programming model enables us to compute $\ML(H, A,T)$.
However, the model has constraints with products of binary variables, which makes it nonlinear.
In order to use established solution techniques for linear integer programming, we present several ways to linearize our model.
Furthermore, in \Cref{sec:polytopes}, we strengthen our model by deriving linear descriptions of certain sub-polytopes present in our models.
Our computational experiments show that these linear descriptions speeds up the solution time by a factor of up to 5.

\subsection{A Network Flow Model}
Our model is based on the work by Nannicini et al.~\cite{nannicini2021optimalqubitassignmentrouting}, who study a closely related problem.
We modify their model to fit our purposes.
We define the directed edge set $\bar{E}$, which contains two arcs $(i, j)$ and $(j, i)$ for each undirected edge $\{i, j\}\in E$.
Let $\bar{N}(i)\define N(i)\cup \{i\}$ denote the set of neighbors of a node $i\in V$ including $i$ itself.
For the sake of shorter notation, we represent each connection $(p,q)$ as an ordered pair instead of an unordered set $\{p,q\}$.

The model includes binary \emph{location} variables $w^t_{pi}\in\{0,1\}$ for all $p\in Q,\, i\in V,\,t\in [T]$, where $w^t_{pi}=1$ indicates that token $p$ is located at node $i$ in bijection $f_t$.
Additionally, it includes binary \emph{routing} variables $x^t_{pij}\in\{0,1\}$ for all $p\in Q,\, i\in V,\, j\in \bar{N}(i),\, t\in [T-1]$, where $x^t_{pij}=1$ indicates that token $p$ moves from node $i$ to node $j$ in step $t$.
The following model characterizes the set of solutions with exactly $T-1$ steps:

\begin{subequations}\label{model:1model}
\begin{align}
    \min \quad&\frac{1}{2}\sum_{t\in [T-1]}\sum_{p\in Q}\sum_{(i, j)\in \bar{E}}x^t_{pij}
    \label{model:1model_obj}\\
    &\sum_{p\in Q} w^t_{pi} = 1,\quad && i\in V,\, t\in [T], \label{model:1model_assignqubit2node}\\
    &\sum_{i\in V} w^t_{pi} = 1,\quad && p\in Q,\, t\in [T], \label{model:1model_assignnode2qubit}\\
    &\sum_{j\in \bar{N}(i)}x^t_{pij}= w^t_{pi},\quad&& p\in Q,\,i\in V,\, t\in [T-1],
    \label{model:1model_wx_convhull1}\\
    &\sum_{j\in \bar{N}(i)}x^{t}_{pji}= w^{t+1}_{pi},\quad&& p\in Q,\,i\in V,\, t\in [T-1],
    \label{model:1model_wx_convhull2}\\
    &\sum_{p\in Q}x^t_{pij} = \sum_{p\in Q}x^t_{pji},\quad&& \{i, j\} \in E, \, t\in [T-1],
    \label{model:1model_xcycle}\\
    &\sum_{t\in [T]}\sum_{(i,j)\in \bar{E}} w^t_{pi} \cdot w^t_{qj}\geq 1,\quad&&  (p,q)\in C,
    \label{model:1model_gate_bilinear}\\
    &w^t_{pi}\in\{0,1\}, \quad&&  p\in Q,\, i\in V,\,t\in [T],
    \label{model:1model_wvar}\\
    &x^t_{pij}\in\{0,1\}, \quad& & p\in Q,\, i\in V,\, j\in \bar{N}(i),\, t\in [T-1],
    \label{model:1model_xvar}
\end{align}
\end{subequations}
The constraints \eqref{model:1model_assignqubit2node} and \eqref{model:1model_assignnode2qubit} establish a one-to-one mapping between tokens and nodes at each step.
Constraints \eqref{model:1model_wx_convhull1} and \eqref{model:1model_wx_convhull2} connect the $x$ and $w$ variables and ensure that a token is placed on adjacent nodes in subsequent steps.
Constraints \eqref{model:1model_xcycle} ensure that at any given step, if a token moves from node $i$ to the adjacent node $j$ such that $i\neq j$, there is another token which moves from $j$ to $i$.
Constraints \eqref{model:1model_gate_bilinear} ensure that all adjacent tokens are placed on adjacent nodes in some step.
The objective \eqref{model:1model_obj} models the number of swaps in a solution.
Finally, \eqref{model:1model_wvar} and \eqref{model:1model_xvar} define the domains of the variables.

Model~\ref{model:1model} can be interpreted as a network flow problem on a time-expanded version $H^{\rm ext}$ of $H$,
as illustrated in \Cref{fig:example_time_exanded_graph}, with additional constraints.
The vertices of $H^{\rm ext}$ consist of $T$ copies of $V$ and arcs connect each node in one copy of $V$ with the copy of itself and its neighbors in the next copy of $V$.
Thus, a solution corresponds to $n$ node-disjoint paths from sources to sinks in $H^{\rm ext}$, subject to additional constraints.

\begin{figure}
    \centering
    \includegraphics[page=8, width=0.5\textwidth]{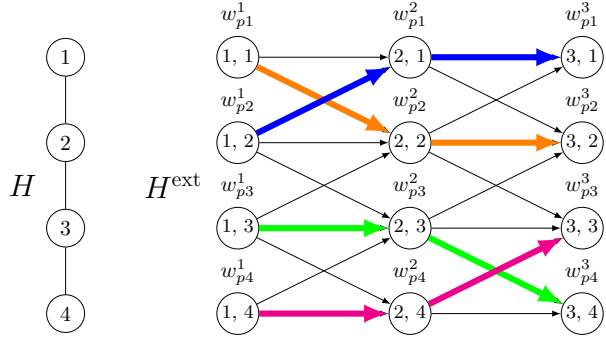}
    \caption{
    An example for the time-expanded hardware $H^{\rm ext}$ graph for $T=3$ and $H=P_4$. 
    A solution of a TMP instance corresponds to a set of $n$ disjoint paths in $H^{\rm ext}$.
    For each token $p$, the variable $w^t_{pi}$ has value 1 if and only if $p$ is placed on node $i$ after $t-1$ steps.}
\label{fig:example_time_exanded_graph} 
\end{figure}

\subsection{Strengthened McCormick Linearization}
To enable the use of integer linear programming solvers, we need to linearize the bilinear constraints \eqref{model:1model_gate_bilinear}.
A straightforward method is to introduce binary \emph{connection} variables $y^t_{pqij}\in \{0,1\}$ for all $(p,q)\in C,\, (i,j)\in \bar{E},\, t\in [T]$, which model the product $w^t_{pi}\cdot w^t_{qj}$.
We then replace constraints~\eqref{model:1model_gate_bilinear} by $\sum_{t\in [T]}\sum_{(i,j)\in \bar{E}} y^t_{pqij}\geq 1$ for all connections $(p, q)$.
Linearizing $y^t_{pqij}=w^t_{pi}\cdot w^t_{qj}$ with the standard McCormick constraints as in \cite{nannicini2021optimalqubitassignmentrouting} leads to the following model:
\begin{subequations}
\begin{align}
    \hypertarget{model:MCY}{\text{YEQ}}: \min \quad&\frac{1}{2}\sum_{t\in [T-1]}\sum_{p\in Q}\sum_{(i, j)\in \bar{E}}x^t_{pij}\\
    &\eqref{model:1model_assignqubit2node}-\eqref{model:1model_xcycle}, \eqref{model:1model_wvar}-\eqref{model:1model_xvar},&&\\
    &\sum_{t\in [T]}\sum_{(i,j)\in \bar{E}} y^t_{pqij}\geq 1,\quad&&  (p,q)\in C,
    \label{model:1amodel_gate_y}\\
    &y^t_{pqij}\leq w^t_{pi}, \quad&&  (p,q)\in C,\, (i,j)\in \bar{E},\,t\in [T],
    \label{model:1amodel_mccormick1}\\
    &y^t_{pqij}\leq w^t_{qj}, \quad&& (p,q)\in C,\, (i,j)\in \bar{E},\,t\in [T],
    \label{model:1amodel_mccormick2}\\
    &y^t_{pqij}\geq w^t_{pi} + w^t_{qj} - 1, \quad&&  (p,q)\in C,\, (i,j)\in \bar{E},\,t\in [T],
    \label{model:1amodel_mccormick3}\\
    &y^t_{pqij}\in \{0,1\}, \quad&&  (p,q)\in C,\, (i,j)\in \bar{E},\, t\in [T].
    \label{model:1amodel_yvar}
\end{align}
\end{subequations}
In our case, modeling only the inequality $y^t_{pqij}\leq w^t_{pi} \cdot w^t_{qj}$ suffices, since constraints \eqref{model:1amodel_gate_y} ensure that for each connection $(p,q)$, there exists a $t \in [T]$ and an edge $(i,j)\in \bar{E}$ such that $y^t_{pqij}=1$.
Although the inequality $y^t_{pqij}\leq w^t_{pi} \cdot w^t_{qj}$ could be modeled by simply dropping the McCormick constraints~\eqref{model:1amodel_mccormick3}, McCormick constraints usually result in a weak linearization
if additional constraints are present~\cite{Boland_2016}.
We obtain a stronger linearization by multiplying constraints~\eqref{model:1model_assignnode2qubit} with variables $w^t_{qj}$ and removing bilinear terms that are zero by assumption, similar to the Reformulation-Linearization Technique~\cite{sherali1992new}.
This linearization results in the following strengthened model:
\begin{subequations}
\begin{align}
    \hypertarget{model:RLTY}{\text{YIEQ}}: \min \quad&\frac{1}{2}\sum_{t\in [T-1]}\sum_{p\in Q}\sum_{(i, j)\in \bar{E}}x^t_{pij}\\
    &\eqref{model:1model_assignqubit2node}-\eqref{model:1model_xcycle}, \eqref{model:1model_wvar}-\eqref{model:1model_xvar}, \eqref{model:1amodel_gate_y},\eqref{model:1amodel_yvar},&&\\
    &\sum_{j\in N(i)}y^t_{pqij}\leq w^t_{pi}, \quad&& (p,q)\in C,\, i\in V,\,t\in [T],
    \label{model:1cmodel_rlt1}\\
    &\sum_{j\in N(i)}y^t_{pqji}\leq w^t_{qi}, \quad&& (p,q)\in C,\, i\in V,\,t\in [T].
    \label{model:1cmodel_rlt2}
\end{align}
\end{subequations}

\subsection{Linearization via Aggregation of Products}
We now present an alternative linearization of the bilinear constraints~\eqref{model:1model_gate_bilinear} based on the observation that, due to constraints~\eqref{model:1model_assignnode2qubit}, the sum $\sum_{(i,j)\in \bar{E}} w^t_{pi} \cdot w^t_{qj}$ takes either the value zero or one for each $(p,q)\in C,\, t\in [T]$.
Thus, we can interpret it as a single binary variable.
Accordingly, we introduce binary \emph{connection} variables $z^t_{pq}\in\{0,1\}$ for all $(p, q)\in C,\, t\in [T]$ such that $z^t_{pq}=\sum_{(i,j)\in \bar{E}} w^t_{pi} \cdot w^t_{qj}$ holds.
Constraints~\eqref{model:1model_gate_bilinear} are then replaced by $\sum_{t\in [T]} z^t_{pq}\geq 1$ for all connections $(p,q)$.
The following model is obtained by linearizing the bilinear constraints $z^t_{pq}=\sum_{(i,j)\in \bar{E}} w^t_{pi} \cdot w^t_{qj}$.

\begin{subequations}
    \begin{align}
    \hypertarget{model:BPZ=}{\text{ZEQ}}: \min\; &\frac{1}{2}\sum_{t\in [T-1]}\sum_{p\in Q}\sum_{(i, j)\in \bar{E}}x^t_{pij}\\
    &\eqref{model:1model_assignqubit2node}-\eqref{model:1model_xcycle}, \eqref{model:1model_wvar}-\eqref{model:1model_xvar},&&\\
    &\sum_{t\in [T]} z^t_{pq}\geq 1,&& (p,q)\in C,
    \label{model:2amodel_gate_z}\\
    &\sum_{j\in V: N(i)\subseteq N(j)}w^t_{pj}+\sum_{j\in N(i)}w^t_{qj}\leq 1 + z^t_{pq}, && (p, q)\in C, i\in V, t\in [T],\label{model:2model_z_1block1_1}\\
    &\sum_{j\in V: N(i)\subseteq N(j)}w^t_{qj}+\sum_{j\in N(i)}w^t_{pj}\leq 1 + z^t_{pq}, && (p, q)\in C, i\in V, t\in [T],\label{model:2model_z_1block1_2}\\
    &\sum_{j\in V: V\setminus N(i)\subseteq V\setminus N(j)}w^t_{pj}-\sum_{j\in N(i)}w^t_{qj}\leq 1 - z^t_{pq}, && (p, q)\in C, i\in V, t\in [T],\label{model:2model_z_1block0_1}\\
    &\sum_{j\in V: V\setminus N(i)\subseteq V\setminus N(j)}w^t_{qj}-\sum_{j\in N(i)}w^t_{pj}\leq 1 - z^t_{pq}, && (p, q)\in C, i\in V, t\in [T],\label{model:2model_z_1block0_2}\\
    &z^t_{pq}\in \{0,1\}, && (p,q)\in C,\, t\in [T].
    \label{model:2amodel_zvar}
    \end{align}
\end{subequations}
Here, we linearized the constraint $z^t_{pq}=\sum_{(i,j)\in \bar{E}} w^t_{pi} \cdot w^t_{qj}$ as follows:
Due to constraints~\eqref{model:1model_assignnode2qubit}, for each connection $(p,q)$ and $t\in [T]$,
we have $w^t_{pi}=w^t_{qj}=1$ for two nodes $i,j\in V, \;i\neq j$.
If $(i,j)\in \bar{E}$, then $z^t_{pq}=1$ must hold, which can be modeled via $w^t_{pi}+w^t_{qj}\leq 1 + z^t_{pq}$, and if $(i,j)\notin \bar{E}$, then $z^t_{pq}=0$ must hold, which can be modeled via $w^t_{pi}+w^t_{qj}\leq 2- z^t_{pq}$.
Constraints~$\eqref{model:2model_z_1block1_1}-\eqref{model:2model_z_1block0_2}$ are a lifted version of these constraints.
More details on the lifting procedure can be found in \Cref{sec:few_constraints}.

As before, it suffices to model the inequality $z^t_{pq}\leq \sum_{(i,j)\in \bar{E}} w^t_{pi} \cdot w^t_{qj}$ instead of modeling equality.
This allows us to drop two of the four constraint sets~$\eqref{model:2model_z_1block1_1}-\eqref{model:2model_z_1block0_2}$ from model~\hyperlink{model:BPZ=}{\text{ZEQ}}, resulting in a model with fewer constraints:

\begin{subequations}
    \begin{align}
    \hypertarget{model:BPZ<}{\text{ZIEQ}}: \min \;&\frac{1}{2}\sum_{t\in [T-1]}\sum_{p\in Q}\sum_{(i, j)\in \bar{E}}x^t_{pij}\\
    &\eqref{model:1model_assignqubit2node}-\eqref{model:1model_xcycle}, \eqref{model:1model_wvar}-\eqref{model:1model_xvar}, \eqref{model:2amodel_gate_z},\eqref{model:2model_z_1block0_1},\eqref{model:2model_z_1block0_2},\eqref{model:2amodel_zvar}.&&\label{ZB:2}
    \end{align}
\end{subequations}

\subsection{Linear Descriptions of Sub-Polytopes}
\label{sec:polytopes}
Tight linear descriptions are crucial for a successful application of branch-and-cut algorithms since they result in strong bounds and thus help to avoid often branching.
In \Cref{app:sec_Ylinearization}, we show that the linearization used in model~\hyperlink{model:RLTY}{\text{YIEQ}} is tight in the sense that the constraints \eqref{model:1model_assignnode2qubit}, \eqref{model:1amodel_gate_y}, \eqref{model:1cmodel_rlt1}, and \eqref{model:1cmodel_rlt2} provide a linear description of the convex hull of all binary-valued points satisfying the constraints \eqref{model:1model_assignnode2qubit}, \eqref{model:1amodel_gate_y} and $y^t_{pqij}\leq w^t_{pi}\cdot w^t_{qj}$ for a fixed connection $(p, q)$.
More formally, we consider the polytope
\begin{align*}
    P^\leq_{pq}\define \conv\{(w^t_{pi}, w^t_{qj}, y^t_{pqij})\in \{0,1\}^{T(2|V|+|\bar{E}|)}:\;& \eqref{model:1model_assignnode2qubit}, \eqref{model:1amodel_gate_y}-\eqref{model:1amodel_mccormick2}
    \}.
\end{align*}
In \Cref{thm:convexhull_PXY_ieq_expanded}, we derive a linear description for a generalized family of polytopes.
For the special case of $P^\leq_{pq}$, we have
\begin{corollary}\label{convexhull_Ylinearization}
\[
    P^\leq_{pq} = \{(w^t_{pi}, w^t_{qj}, y^t_{pqij})\in [0,1]^{T(2|V|+|\bar{E}|)}:\;\eqref{model:1model_assignnode2qubit}, \eqref{model:1amodel_gate_y},\eqref{model:1cmodel_rlt1},\eqref{model:1cmodel_rlt2}
    \}.
\]
\end{corollary}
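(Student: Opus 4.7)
The corollary is stated as a specialisation of Theorem \ref{thm:convexhull_PXY_ieq_expanded}, so my plan is to prove the theorem in full generality and then instantiate it for $P^\leq_{pq}$. The instantiation amounts to identifying the generic variables with $w^t_{pi}$, $w^t_{qj}$ and $y^t_{pqij}$, the generic index sets with $[T]\times V$ and $[T]\times \bar{E}$, and checking that the adjacency structure defined by $N(\cdot)$ fits the hypotheses of the theorem. Since the heavy lifting sits in Theorem \ref{thm:convexhull_PXY_ieq_expanded}, the outline below describes how I would prove the set equality directly for $P^\leq_{pq}$, which the general theorem will mirror at the abstract level.

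The inclusion $P^\leq_{pq}\subseteq\{\dots\}$ is the easy direction, and for it I would only need to verify that \eqref{model:1cmodel_rlt1} and \eqref{model:1cmodel_rlt2} are valid for every $0/1$ point satisfying \eqref{model:1model_assignnode2qubit} and \eqref{model:1amodel_mccormick1}--\eqref{model:1amodel_mccormick2}. Fix such a point, a time $t$ and a node $i$. If $w^t_{pi}=0$, then \eqref{model:1amodel_mccormick1} forces all terms in $\sum_{j\in N(i)}y^t_{pqij}$ to vanish; if $w^t_{pi}=1$, the assignment constraint \eqref{model:1model_assignnode2qubit} for token $q$ implies that at most one variable $w^t_{qj}$ with $j\in N(i)$ equals $1$, so by \eqref{model:1amodel_mccormick2} at most one term in the sum equals $1$. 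Inequality \eqref{model:1cmodel_rlt2} is symmetric.

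For the reverse inclusion, I would show that every vertex of the LP-described polytope is integral. The key structural observation is that once the $w$-variables are fixed, the $y$-constraints \eqref{model:1cmodel_rlt1}--\eqref{model:1cmodel_rlt2} factorise across time into bipartite transportation polytopes whose constraint matrices are totally unimodular: for each fixed $t$ the $y^t$-block looks like an assignment of the marginals $(w^t_{pi})_i$ to the marginals $(w^t_{qj})_j$ along edges of $H$. My strategy would be to first prove integrality of the relaxed polytope obtained by dropping the coupling inequality \eqref{model:1amodel_gate_y}, exploiting this factorisation and total unimodularity slice by slice, and then to handle the full polytope by an explicit convex-combination construction. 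Concretely, given a fractional feasible point I would peel off, one integral component at a time, a support consisting of one integer assignment per time slice together with one distinguished slice satisfying the adjacency condition, thereby contributing one unit to the coverage sum in \eqref{model:1amodel_gate_y}.

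The main obstacle I expect is precisely the single scalar inequality \eqref{model:1amodel_gate_y}: adding one arbitrary row to an integer polytope can easily destroy integrality, so the argument must use the combinatorial meaning of the coverage constraint rather than a generic facet-of-TU argument. Overcoming this is where I would lean on the generality of Theorem \ref{thm:convexhull_PXY_ieq_expanded}: by treating the coverage constraint as part of the polytope template, the theorem lets the peeling argument exploit the bipartite structure of each time slice uniformly, and the corollary then drops out by direct specialisation.
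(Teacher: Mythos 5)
Your easy direction is fine: the validity of \eqref{model:1cmodel_rlt1} and \eqref{model:1cmodel_rlt2} for the $0/1$ points follows exactly as you say (the paper derives it slightly differently, by multiplying \eqref{model:1model_assignnode2qubit} with $w^t_{qj}$ and summing, but the content is the same). The problem is the hard direction. You correctly identify the single coverage inequality \eqref{model:1amodel_gate_y} as the crux, but you never actually supply a mechanism that overcomes it. Your ``peeling'' construction is stated in one sentence --- peel off one integral component per time slice together with one distinguished slice carrying the coverage --- with no argument for why the residual point stays feasible for \eqref{model:1cmodel_rlt1}--\eqref{model:1cmodel_rlt2}, why the peeled weights can be chosen consistently across slices when the coverage mass in \eqref{model:1amodel_gate_y} is spread fractionally over many $(t,i,j)$ triples, or why the process terminates with total weight one. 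Worse, your closing paragraph resolves the obstacle by ``leaning on the generality of Theorem~\ref{thm:convexhull_PXY_ieq_expanded}'' --- but that theorem is precisely what you set out to prove, so as written the argument is circular.

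You also dismiss the route the paper actually takes. You assert that adding the coverage row forces one to abandon ``a generic facet-of-TU argument,'' but the paper's proof of Theorem~\ref{thm:convexhull_PXY_ieq_expanded} shows that the \emph{entire} constraint matrix --- partition rows, the identity-plus-incidence block coupling $w$ and $y$, \emph{and} the coverage row $1_E^\top$ --- is totally unimodular (Lemma~\ref{proof:partition_tu}), via an explicit equitable row-bicoloring in the sense of Ghouila--Houri, with a separate case for submatrices containing the coverage row. So integrality of all vertices follows at once, with no slice-by-slice decomposition and no peeling. To repair your proposal you would either have to carry out the convex-combination decomposition in full (nontrivial, for the reasons above) or, more economically, verify total unimodularity of the augmented matrix directly as the paper does.
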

  
In \Cref{app:sec_Zlinearization}, we derive a linear description of the convex hull of all binary-valued points satisfying constraints \eqref{model:1model_assignnode2qubit} and either $z^t_{pq}=\sum_{(i,j)\in \bar{E}} w^t_{pi} \cdot w^t_{qj}$ or $z^t_{pq}\leq \sum_{(i,j)\in \bar{E}} w^t_{pi} \cdot w^t_{qj}$ for a fixed connection $(p, q)$ and $t\in [T]$.
To this end, we define 
\begin{align*}
    P_{pqt}&\define \conv\{(w^t_{pi},w^t_{qj},z^t_{pq})\in \{0,1\}^{2|V|+1}:\eqref{model:1model_assignnode2qubit},\; z^t_{pq}=\sum_{(i,j)\in \bar{E}}w^t_{pi}\cdot w^t_{qj}\},\\
    P^\leq_{pqt}&\define \conv\{(w^t_{pi},w^t_{qj},z^t_{pq})\in \{0,1\}^{2|V|+1}:\eqref{model:1model_assignnode2qubit},\; z^t_{pq}\leq \sum_{(i,j)\in \bar{E}}w^t_{pi}\cdot w^t_{qj}\}.
\end{align*}
Also in this case, we derive a linear description for a generalized family of polytopes in \Cref{thm:CXY=,thm:CXY<=}.
Applying these theorems to $P_{pqt}$ and $P^\leq_{pqt}$ yields
\begin{corollary}\label{convexhull_Zlinearization}
    Let $V'$ be a copy of $V$, and define the bipartite graph $G_{pqt}\coloneqq (V\,\dot{\cup}\, V',\{(i,j)\in V\times V': \{i,j\}\in E\})$.
    Then
    \begin{align*}
    &P_{pqt}
    =
    \conv\{(w^t_{pi},w^t_{qj},z^t_{pq})\in [0,1]^{2|V|+1}: \eqref{model:1model_assignnode2qubit},\eqref{con:ifthenrealmodeleq},\eqref{con:ifthenrealmodelieq0},\eqref{con:ifthenrealmodelieq1}\},\\
    &P_{pqt}^\leq 
    =
    \conv\{(w^t_{pi},w^t_{qj},z^t_{pq})\in [0,1]^{2|V|+1}: \eqref{model:1model_assignnode2qubit},\eqref{con:ifthenrealmodeleq},\eqref{con:ifthenrealmodelieq1}\},
    \end{align*}
    where the constraints are
    \begin{align}
        &\sum_{i\in I}w^t_{pi}+\sum_{j\in J}w^t_{qj}= 1 + z^t_{pq},\quad && (I, J)\in B(G_{pqt}): E(G_{pqt})=I\times J,
        \label{con:ifthenrealmodeleq}\\
        &\sum_{i\in I}w^t_{pi}+\sum_{j\in J}w^t_{qj}\leq 1 + z^t_{pq},\quad && (I, J)\in B(G_{pqt}): E(G_{pqt})\neq I\times J,
        \label{con:ifthenrealmodelieq0}\\
        &\sum_{i\in I}w^t_{pi}+\sum_{j\in J}w^t_{qj}\leq 2 - z^t_{pq},\quad && (I, J)\in AB(G_{pqt}): E(G_{pqt})\neq V\setminus V'\times Y\setminus J.
        \label{con:ifthenrealmodelieq1}
    \end{align}
    Here, $B(G_{pqt})$ and $AB(G_{pqt})$ denote the inclusion-maximal bicliques and antibicliques, respectively.
    For either of the two sets, an element is a pair $(I,J)$, where $I$ and $J$ denote the $V$ and $V'$ component of the (anti-)biclique $I\times J$.
\end{corollary}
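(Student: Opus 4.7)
The plan is to obtain this corollary as a direct specialization of the general polyhedral characterizations \Cref{thm:CXY=} and \Cref{thm:CXY<=}. Those theorems describe, for an arbitrary bipartite graph $G = (V \,\dot\cup\, V', E(G))$, the convex hull of binary vectors $(x, y, z)$ satisfying the two assignment constraints $\sum_{i \in V} x_i = \sum_{j \in V'} y_j = 1$ together with either $z = \sum_{(i,j) \in E(G)} x_i y_j$ or $z \leq \sum_{(i,j) \in E(G)} x_i y_j$, in terms of the inclusion-maximal bicliques and antibicliques of $G$.

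First I would identify the data. Setting $x_i := w^t_{pi}$ for $i \in V$, $y_j := w^t_{qj}$ for $j \in V'$, and $z := z^t_{pq}$, the assignment conditions \eqref{model:1model_assignnode2qubit} match exactly the hypothesis of the general theorems. For any 0/1 point satisfying \eqref{model:1model_assignnode2qubit}, there is a unique pair $(i^*, j^*)$ with $w^t_{pi^*} = w^t_{qj^*} = 1$, and $\sum_{(i,j) \in \bar E} w^t_{pi} w^t_{qj}$ then equals the indicator of $\{i^*, j^*\} \in E$, which by definition of $G_{pqt}$ is $1_{(i^*, j^*) \in E(G_{pqt})}$. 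Hence $P_{pqt}$ and $P^{\leq}_{pqt}$ coincide with the polytopes whose convex hulls are characterized by \Cref{thm:CXY=} and \Cref{thm:CXY<=} applied to $G_{pqt}$.

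Next I would translate the inequalities. Specializing the descriptions to $G = G_{pqt}$, the constraints over $B(G_{pqt})$ split according to whether a maximal biclique $(I, J)$ exhausts the whole edge set: when $E(G_{pqt}) = I \times J$, the two one-sided implications between $\{x_i = y_j = 1\}$ and $\{z = 1\}$ coincide and yield the equation \eqref{con:ifthenrealmodeleq}; otherwise only the upward implication survives, giving \eqref{con:ifthenrealmodelieq0}. The constraints over $AB(G_{pqt})$ become the downward implications \eqref{con:ifthenrealmodelieq1}. Dropping the ``forcing-to-one'' family \eqref{con:ifthenrealmodelieq0}, which is exactly the part not needed when modeling $z \leq$ (product), yields the description of $P^{\leq}_{pqt}$.

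The main obstacle is not in the corollary itself but rather in the general theorems \Cref{thm:CXY=} and \Cref{thm:CXY<=} of \Cref{app:sec_Zlinearization}, which prove both the validity of all listed inequalities and integrality of the induced LP relaxation. For the corollary, the only real work is bookkeeping: one must verify that the inclusion-maximal bicliques and antibicliques of $G_{pqt}$, together with the side condition $E(G_{pqt}) = I \times J$, are transcribed without omission or duplication into the index sets of \eqref{con:ifthenrealmodeleq}–\eqref{con:ifthenrealmodelieq1}.
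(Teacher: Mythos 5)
Your proposal is correct and matches the paper's route exactly: the paper likewise observes that $P_{pqt}$ and $P^{\leq}_{pqt}$ are isomorphic to $C_{XY}(G_{pqt})$ and $C^{\leq}_{XY}(G_{pqt})$ under the identification $x_i = w^t_{pi}$, $y_j = w^t_{qj}$, $z = z^t_{pq}$, and then states that the corollary follows directly from \Cref{thm:CXY=} and \Cref{thm:CXY<=}. Your additional bookkeeping of which constraint families survive in the $\leq$ case is consistent with the paper's statements.
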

The number of antibicliques in $G_{pqt}$ can grow very quickly in the number of nodes.
Computational experiments indicate an exponential growth of $|AB(G_{pqt})|$ with respect to the number of nodes $n$ for the path graph $P_n$ and the cycle graph $C_n$.
Hence, including the constraints~$\eqref{con:ifthenrealmodeleq}-\eqref{con:ifthenrealmodelieq1}$ would lead to prohibitively large integer programming models, which is why we instead use a smaller set of tight inequalities, $\eqref{model:2model_z_1block1_1}-\eqref{model:2model_z_1block0_2}$ for $P_{pqt}$ and \eqref{model:2model_z_1block0_1},\eqref{model:2model_z_1block0_2} for $P_{pqt}^\leq$.

\subsection{Symmetry Breaking Constraints}\label{sec:symmetries}
An integer programming model is said to be \emph{symmetric} if there exist permutations of its variables under which the model remains invariant.
High symmetry often hinders standard branch-and-cut algorithms for solving integer programs due to repeated exploration of isomorphic subproblems in the enumeration tree.
We say that a graph is \emph{symmetric} if it has a nontrivial automorphism, that is, an isomorphism onto itself.
From \Cref{cor:isomorphism_trafo} follows that our integer programming models are symmetric if $H$ or $A$ are symmetric.
In the following, we introduce constraints which break the symmetry in our models if $H$ or $A$ are symmetric.

\textbf{$\boldsymbol{H}$ is symmetric.}
Two nodes $i$ and $j$ are called \emph{similar} if there is an automorphism that maps $i$ to $j$.
Let $K$ be an inclusion-maximal set of non-similar nodes.
Then, there exists an optimal solution with $f_k(p)\in K$ where $k\in [T]$ and $p\in Q$ are arbitrary but fixed.
To eliminate symmetric solutions, we may include the following constraints for an arbitrary token $p\in Q$ and step $k=\max\{1, \lfloor T / 2\rfloor\}$:
\begin{equation}\label{hardware symmetry constraints}
    \sum_{i\in K}w^k_{pi}=1,\qquad w^t_{pj}=0,\quad j\in V,\,t\in [T]: \min_{i\in K}\text{dist}(i,j)\geq 1 + |k-t|.
\end{equation}

\textbf{$\boldsymbol{A}$ is symmetric.}
If $A=K_n$, then for any initial assignment $f:Q \to V$ and $t\in [T]$, there exists an optimal solution such that $f$ is the bijection at $t$.
Let $k=\max\{1, \lfloor T / 2\rfloor\}$.
Then, we can include the constraints
\begin{equation}\label{algorithm symmetry constraints}
    w^k_{pf(p)}=1,\quad  p \in Q,\qquad w^t_{pj}=0,\quad j\in V,\,t\in [T]: \text{dist}(f(p),j)\geq 1 + |k-t|.
\end{equation}

\section{Computational Experiments and Applications}
\label{sec:computational}
This section is divided into three parts.
In \Cref{sec:IPperformance}, we compare the integer programming models for the Token Meeting Problem introduced in \Cref{sec:ip_models}.
\Cref{sec:costofoptimality} studies the computational cost of obtaining swap-optimal solutions.
In \Cref{sec:qubit routing case study}, we present a complete solution method for the qubit routing problem with commuting gates and compare it against two heuristics and a near-optimal method.
To evaluate solution quality and performance under realistic conditions, we consider two hardware graphs:
a $3\times3$ grid $H_1$ and a graph $H_2$ consisting of two 5-node cycles sharing an edge, totaling 8 nodes, as shown in \Cref{fig:computational_hardware_graphs}.
The graphs are chosen to resemble existing hardware graphs, specifically grid~\cite{harrigan2021quantum} and heavy-hex~\cite{chamberland2020topological} architectures.
All computational experiments were conducted on a server with Intel Xeon Gold 6326 processors, 128 GB RAM and 32 cores.
Gurobi 12.0.2 was used to solve the integer programs.
The code is publicly available \href{https://github.com/MoritzStargalla/optimized_qubit_routing_for_commuting_gates/}{here}.

\begin{figure}
    \centering
    \includegraphics[page=9, width=0.375\textwidth]{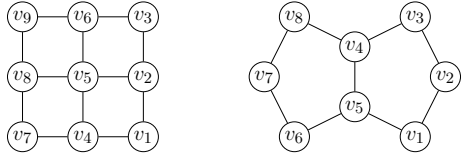}
    \caption{
        Graphs $H_1$ (left) and $H_2$ (right).
    }
    \label{fig:computational_hardware_graphs}
\end{figure}

\textbf{Data generation.}
For each hardware graph, we generate 20 algorithm graphs with varying densities $d\in\{0.05,0.1,\dots, 1\}$.
For each $d$, we construct a random graph with $\lceil n(n-1)/2 \cdot d \rceil$ edges, accepting the first connected graph found within 1000 attempts or the graph generated in the final attempt.
This resembles practical scenarios that typically involve connected algorithm graphs.

\textbf{Computing swap-optimal solutions.}
In principle, \Cref{alg:step_increase} can be employed to compute a swap-optimal solution.
However, we use a slightly improved version of \Cref{alg:step_increase}, which avoids solving multiple models with fewer steps than $\ML(H,A)$.
First, we compute $\ML(H,A, t)$ for increasing $t\in \N$ until $\ML(H,A, t)<\infty$.
Then $t=\MT(H,A)$ holds.
In \Cref{sec:IPperformance}, we compare the runtimes of different integer programming models for this procedure.
Finally, we compute $\ML(H,A, t^*)$ for $t^*=\ML(H,A, \MT(H,A))-1$.
By \Cref{prop:termination_increasing_steps}, this either yields a swap-optimal solution or proves that $\ML(H,A,\MT(H,A))=\ML(H,A)$ holds if the integer program is infeasible.
This second step is examined in \Cref{sec:costofoptimality}.

\subsection{Comparison of Different Models}\label{sec:IPperformance}
We compare the models \hyperlink{model:MCY}{\text{YEQ}}, \hyperlink{model:RLTY}{\text{YIEQ}}, \hyperlink{model:BPZ=}{\text{ZEQ}}, and \hyperlink{model:BPZ<}{\text{ZIEQ}} as introduced in \Cref{sec:ip_models}.

\textbf{Solution approach.} Each instance is solved by iteratively increasing the number of steps, starting from the lower bound in \Cref{lem:lower_step_bound} until a feasible solution is found.
A time limit of 7200 s is applied per run.
The runtime of \hyperlink{model:BPZ<}{\text{ZIEQ}} is used as a benchmark $T^{\rm bench}$ and the runtimes $T$ of other models are scaled as $T^{\rm scaled}=T/T^{\rm bench}$.
The values of $T^{\rm bench}$ and $T^{\rm scaled}$ are shown in \Cref{fig:eval_h2_scaled_runtimes}
while step and swap counts are shown in \Cref{fig:eval_swaps_depth}.

\begin{figure}
    \centering
    \includegraphics[width=0.8\textwidth]{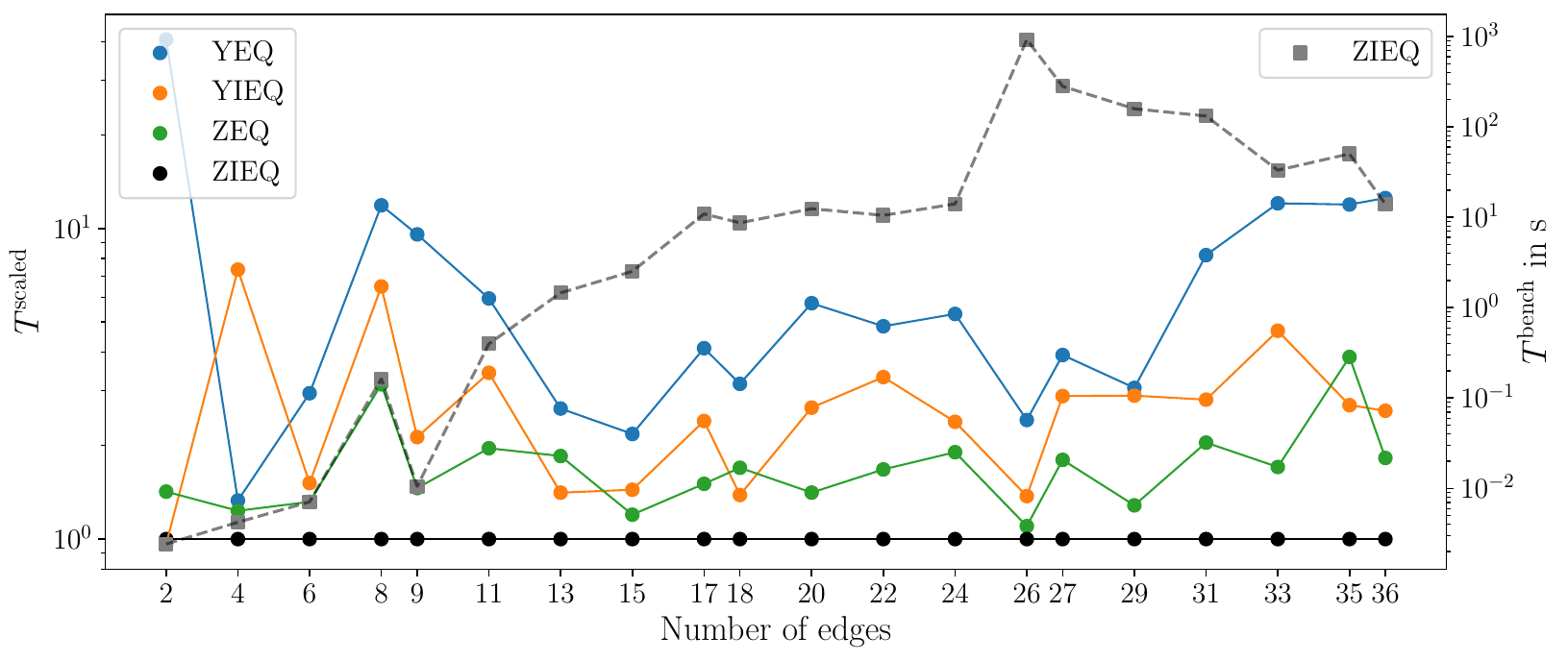}
    \includegraphics[width=0.8\textwidth]{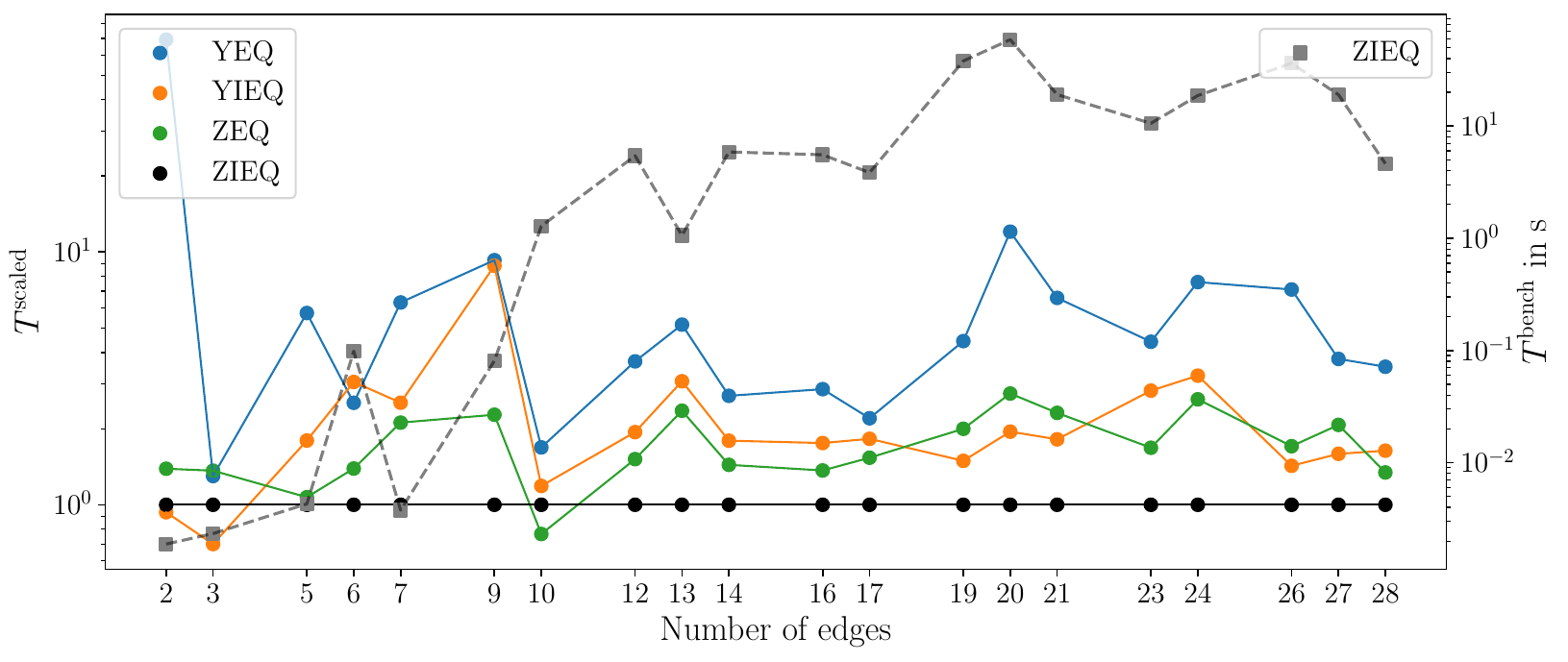}
    \caption{
        Runtime comparison for all models and instances of $H_1$ (top) and $H_2$ (bottom).
        The x-axis represents the number of edges in each instance.
        For each configuration, the values $T^{\rm scaled}$ (solid lines) are plotted with respect to the left y-axis, while  $T^{\rm bench}$ (dashed, gray) is plotted with respect to the right y-axis.
        Both y-axes are scaled logarithmically.
    }
    \label{fig:eval_h2_scaled_runtimes}
\end{figure}

\begin{figure}
    \centering
    \subfloat{
        \includegraphics[width=0.425\textwidth]{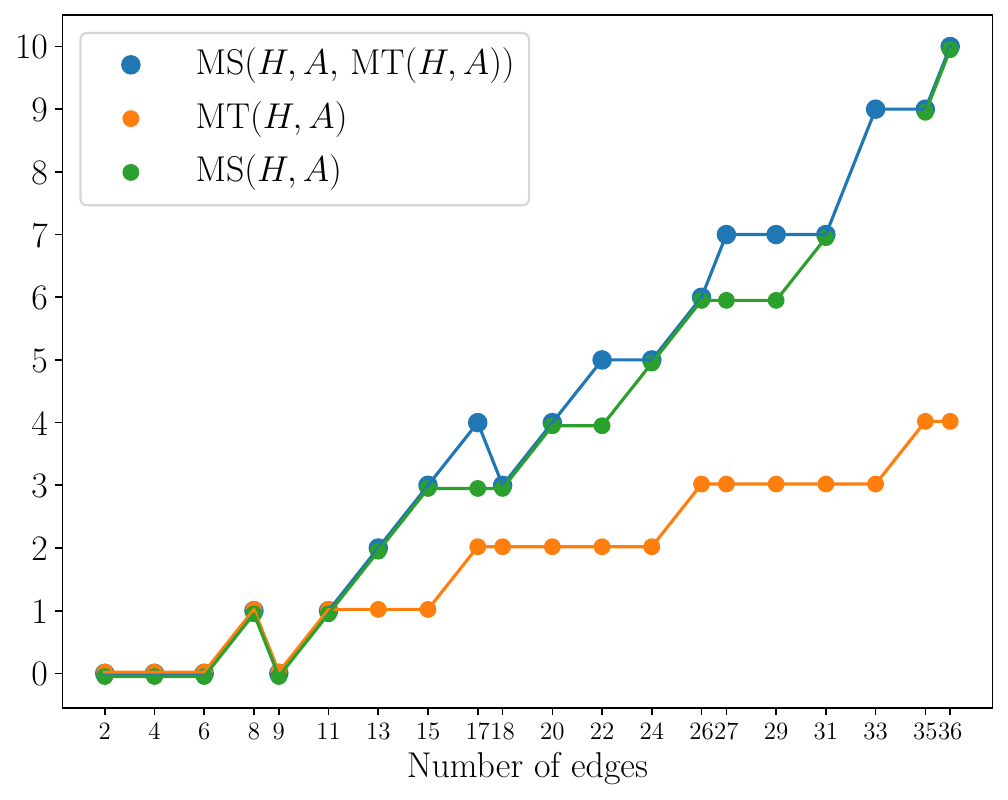}
    }
    \hspace{0.075\textwidth}
    \subfloat{
        \includegraphics[width=0.425\textwidth]{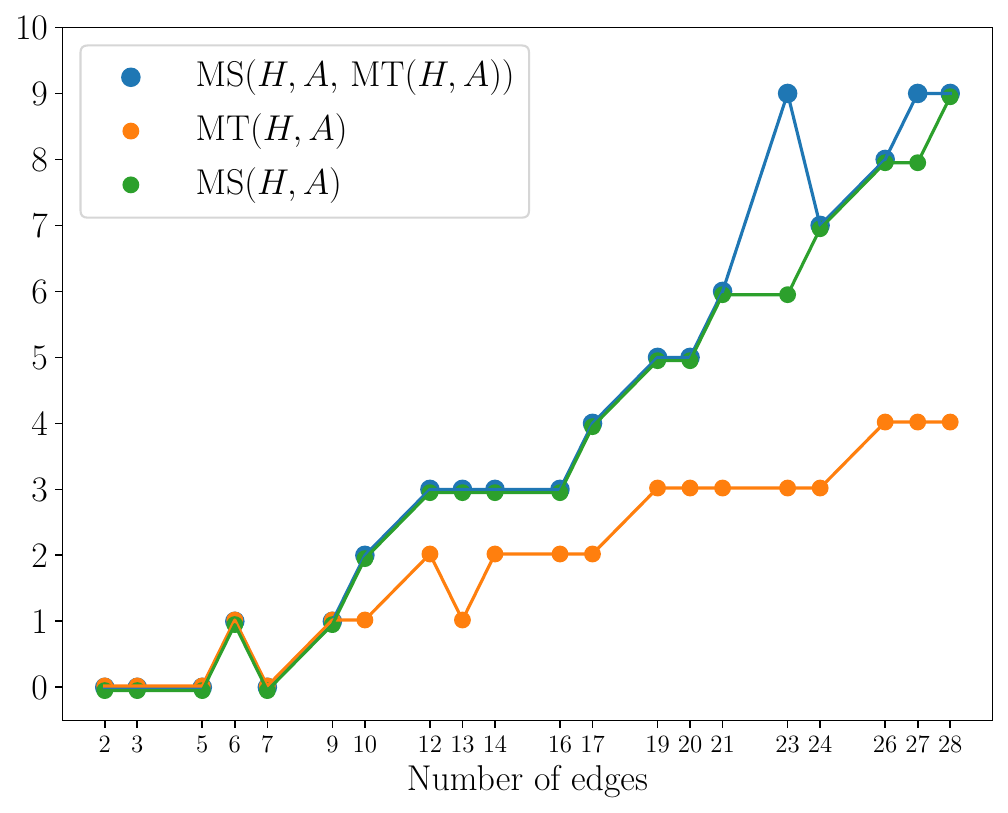}
    }
    \caption{
        Minimum number of steps (blue) and the minimum number of swaps (green) for instances of $H_1$ (left) and $H_2$ (right), as well as $\ML(H,A,\MT(H,A))$ (blue).
    }
    \label{fig:eval_swaps_depth}
\end{figure}

\textbf{Results on $H_1$.}
\hyperlink{model:MCY}{\text{YEQ}} incurred the highest runtimes for nearly all instances.
Excluding the outlier instance with 2 edges, \hyperlink{model:RLTY}{\text{YIEQ}} achieved a speed-up of $2.2\times$ on average compared to \hyperlink{model:MCY}{\text{YEQ}}; \hyperlink{model:BPZ=}{\text{ZEQ}} and \hyperlink{model:BPZ<}{\text{ZIEQ}} have average speed-up factors of $3.3$ and $6.0$ compared to \hyperlink{model:MCY}{\text{YEQ}}, respectively.

\textbf{Results on $H_2$.}
Again, \hyperlink{model:MCY}{\text{YEQ}} had the highest runtimes.
Excluding the outlier instance with 2 edges, \hyperlink{model:RLTY}{\text{YIEQ}} offers a $2.4\times$ average speed-up compared to \hyperlink{model:MCY}{\text{YEQ}}.
\hyperlink{model:BPZ=}{\text{ZEQ}} and \hyperlink{model:BPZ<}{\text{ZIEQ}} achieved average speed-up factors of $2.7$ and $4.9$ compared to \hyperlink{model:MCY}{\text{YEQ}}, respectively.

We conclude that \hyperlink{model:BPZ<}{\text{ZIEQ}} consistently outperforms all other models, including \hyperlink{model:BPZ=}{\text{ZEQ}}, suggesting that modeling the inequality $z^t_{pq}\leq \sum_{(i,j)\in \bar{E}}w^t_{pi}\cdot w^t_{qj}$ performs better than modeling the equality $z^t_{pq}= \sum_{(i,j)\in \bar{E}}w^t_{pi}\cdot w^t_{qj}$.
Maximum runtimes for \hyperlink{model:BPZ<}{\text{ZIEQ}} were 926 s (26 edges, $H_1$) and 59 s (20 edges, $H_2$).

\textbf{Symmetry breaking using the hardware graph.}
Given the symmetry of $H_1$ and $H_2$, we applied the the symmetry-breaking constraints~\eqref{hardware symmetry constraints} as described in \Cref{sec:symmetries}.
However, including these constraints resulted in a performance degrade in our experiments.

\textbf{Symmetry breaking using the algorithm graph.}
For the case $A=K_n$, i.e., density $d=1$, we compare runtimes with and without fixing the initial bijection via constraints~\eqref{algorithm symmetry constraints} as described in \Cref{sec:symmetries}.
The runtimes are reported in \Cref{tab:eval_completegraph_fixing}.
\begin{table}
    \centering
    {\small
        \addtolength{\tabcolsep}{-0.4em}
    \begin{tabular}{lcccccccc|}
        $H_i$-Strategy & \hyperlink{model:MCY}{\text{YEQ}}& \hyperlink{model:RLTY}{\text{YIEQ}}& \hyperlink{model:BPZ=}{\text{ZEQ}}& \hyperlink{model:BPZ<}{\text{ZIEQ}} \\
        \hline
        $H_1$-Standard & 175.1 & 36.2 & 25.5 & 14.0 \\
        $H_1$-Fixing & 14.2 & 7.9 & 10.0 & 5.2 \\
        \hline
        $H_2$-Standard & 16.3 & 7.6 & 6.2 & 4.6 \\
        $H_2$-Fixing & 4.8 & 2.2 & 2.6 & 1.6 \\
        \hline
    \end{tabular}}
    \caption{The total runtime in seconds for solving the models with input $(H_1, K_9)$ and $(H_2, K_8)$ with fixing (Fixing) and without fixing (Standard) the initial placement.}
    \label{tab:eval_completegraph_fixing}
\end{table}
All models benefited from significant runtime reductions, with speed-up factors ranging from $2\times$ to $12\times$.
We therefore strongly recommend using this fixing strategy when solving instances with $A=K_n$.
This arises, for example, in QAOA applied to the Sherrington–Kirkpatrick model~\cite{harrigan2021quantum}.
Moreover, the resulting solutions for $(H, K_n)$ can be used in the method of Weidenfeller et al.~\cite{Weidenfeller_2022}.

\subsection{The Cost of Optimality}\label{sec:costofoptimality}
In \Cref{sec:IPperformance}, we have computed the value of $\ML(H,A,\MT(H,A))$.
To obtain a swap-optimal solution, we now solve the fastest model \hyperlink{model:BPZ<}{\text{ZIEQ}} with $\ML(H,A,\MT(H,A))-1$ many steps,
that is, we compute $\ML(H,A,\ML(H,A,\MT(H,A))-1)$.
If model \hyperlink{model:BPZ<}{\text{ZIEQ}} is infeasible, then $\ML(H,A,\MT(HA))=\ML(H, A)$ holds; if it is feasible, we obtain a swap-optimal solution by \Cref{prop:termination_increasing_steps}.
To reduce the symmetry of the final model, we slightly modify model \hyperlink{model:BPZ<}{\text{ZIEQ}}.
\begin{subequations}
    \begin{align}
    \hypertarget{model:ZBOPT}{\text{ZOPT}}: \min \quad&\sum_{t\in [T-1]}s_t\\
    &\eqref{ZB:2},&&\\
    &\sum_{p\in Q}\sum_{(i,j)\in \bar{E}}x^t_{pij}=2s_t,&&t\in [T-1],\label{con:oneswap}\\
    &s_{t+1}\leq s_t,&&t\in [T-2],\label{con:symmetry1}\\
    &z^{t+1}_{pq}\leq s_t,&&(p,q)\in C,\,t\in [T-1],\label{con:symmetry2}\\
    &s_t\in \{0,1\},&&t\in [T-1].
    \end{align}
\end{subequations}
In this model, the binary variable $s_t\in \{0,1\}$ indicates whether a swap is performed in the step from
$t$ to $t+1$ for $t\in [T-1]$. 
Constraints~\eqref{con:oneswap} enforce this logic.
Note that we can restrict ourselves to solutions with one swap per step since every swap-optimal solution with up to $\ML(H,A,\MT(H,A))-1$ swaps will be feasible for model \hyperlink{model:ZBOPT}{ZOPT}.
Constraints~\eqref{con:symmetry1} impose an ordering on the variables $s_t$, which breaks symmetry in instances with $\ML(H,A)<\ML(H,A,\MT(HA))-1$ by enforcing that steps without swaps appear at the end.
Similarly, constraints~\eqref{con:symmetry2} further break symmetry by requiring that the connection indicator variables $z^{t}_{pq}$ can only be set to 1 in step $t$ if the preceding step contains a swap.

\Cref{fig:optimality_times} shows the total runtime obtained by summing the time to compute $\ML(H,A,\MT(H,A))$ using model \hyperlink{model:BPZ<}{\text{ZIEQ}} and the time to compute $\ML(H,A)$ by additionally solving model \hyperlink{model:ZBOPT}{ZOPT}.
This combined approach is denoted as \hyperlink{model:BPZ<}{\text{ZIEQ}} + \hyperlink{model:ZBOPT}{ZOPT}.
Model \hyperlink{model:ZBOPT}{ZOPT} was solved within 24 hours for all instances, except for the instance with 33 edges for $H_1$.
We observe that computing $\ML(H,A)$ significantly increases the runtime compared to computing $\ML(H,A,\MT(H,A))$ - in some cases by a factor of $1,000$.
In model \hyperlink{model:ZBOPT}{ZOPT}, a solution with $\ML(H,A)$ swaps was typically found relatively fast.
Closing the optimality gap required the majority of the total runtime.
The number of computed values of $\ML(H,A)$ and $\ML(H,A,\MT(H,A))$ are shown in \Cref{fig:eval_swaps_depth}.
We observe that in most instances $\ML(H,A)=\ML(H,A,\MT(H,A))$ holds.
A difference of one swap, $\ML(H,A)=\ML(H,A,\MT(H,A))-1$, also occurs frequently. 
The largest observed difference of $3$ swaps appeared in the instance with 23 nodes on $H_2$.
We conclude that computing $\ML(H,A,\MT(H,A))$ consistently yields close-to-optimal solutions while requiring significantly less computation time.

\begin{figure}
    \centering
    \subfloat{
        \includegraphics[width=0.425\textwidth]{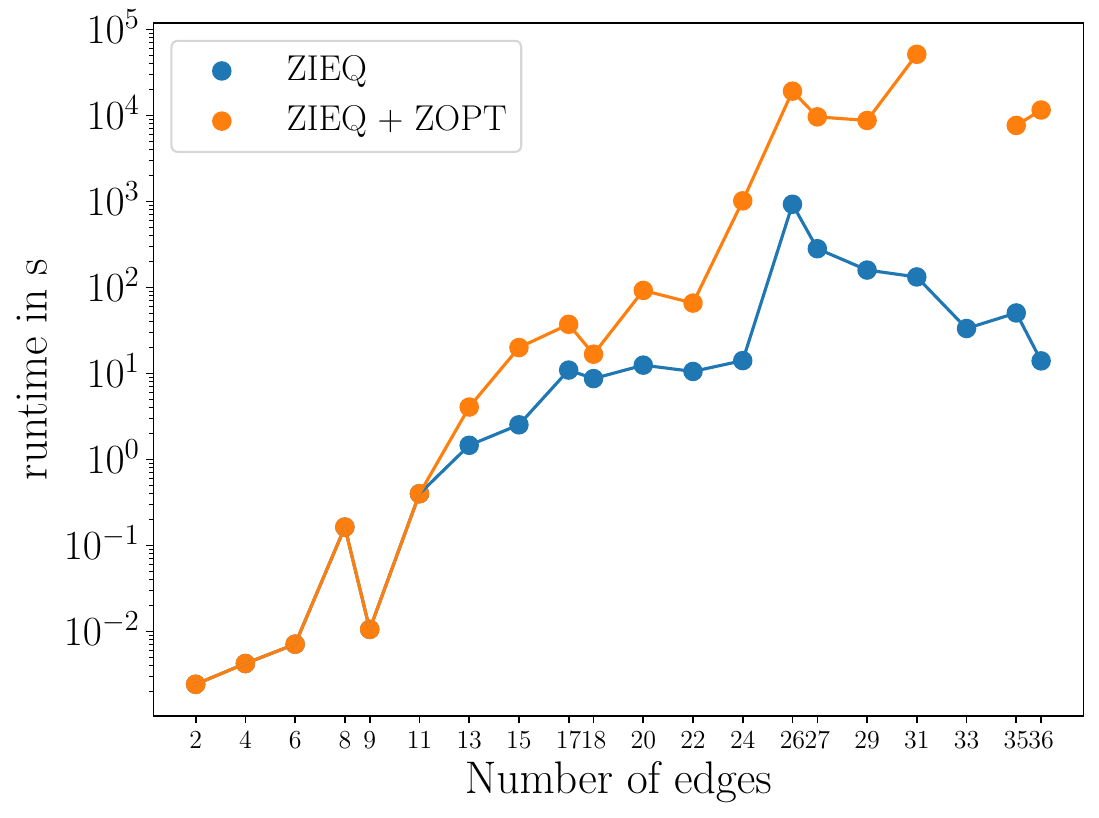}
    }
    \hspace{0.075\textwidth}
    \subfloat{
        \includegraphics[width=0.425\textwidth]{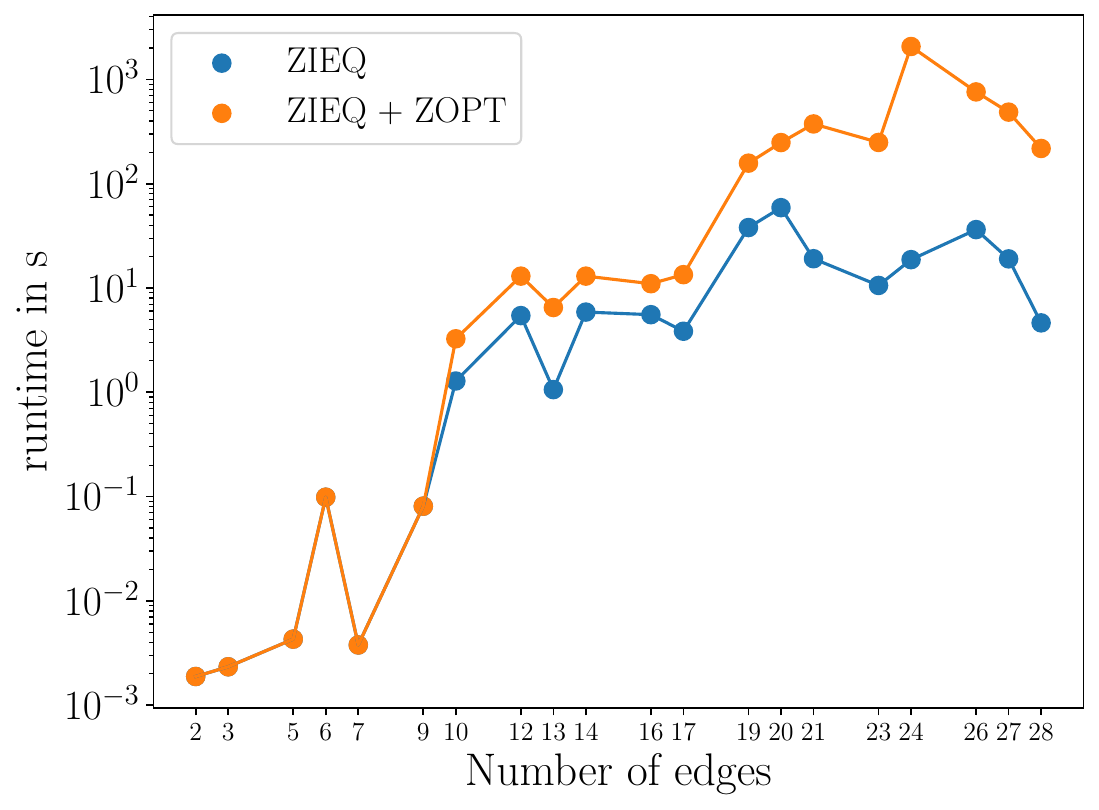}
    }
    \caption{
        Runtime comparison between \protect\hyperlink{model:BPZ<}{\text{ZIEQ}} and \protect\hyperlink{model:BPZ<}{\text{ZIEQ}} + \protect\hyperlink{model:ZBOPT}{ZOPT} for $H_1$ (left) and $H_2$ (right).
    }
    \label{fig:optimality_times}
\end{figure}
    
\subsection{Qubit Routing with Commuting Gates}\label{sec:qubit routing case study}
Next, we compare our two-step approach to the qubit routing problem with commuting gates to existing methods from literature.
To obtain a routed quantum circuit, the swap solution derived from the TMP solution must be converted into a full solution.
Our procedure for this task is illustrated in \Cref{fig:scheduling_ip_illustration} and summarized as follows.
Assume that we are given a swap solution $(f_1, (M_1,\dots, M_T))$ with $T$ steps.
To obtain a quantum circuit, we insert the swap gates corresponding to each matching $M_t$ into a single swap layer $S_t$.
Next, we create a number of empty layers after each swap layer and before the first one.
Now, the task is to assign the gates to either the swap layers or the empty layers such that, in each layer, the scheduled gates form a matching in the hardware graph.
For each gate, let $T^s_{pq}\subseteq [T]$ denote the set of steps $t$ such that the gate $(p,q)$ can be scheduled directly into the swap layer $S_t$, and let $T_{pq}\subseteq [T+1]$ denote the set of steps $t$ such that the gate can be inserted into an empty layer prior to the swap layer $S_t$ (for notational simplicity, we introduce an artificial, empty swap layer $S_{t+1}$).
To ensure that all feasible solutions included, we must add a sufficient number of empty layers.
For each $t\in [T+1]$, define the subgraph $H_t=(V, E_t)$ of the hardware graph $H$, where $\{i,j\}\in E_t$ if and only if $\{i,j\}\in E$ and $\{f^{-1}_t(i),f^{-1}_t(j)\}\in C$.
This represents the set of edges in the hardware graph that correspond to executable gates with respect to the bijection $f_t$.
Inserting gates into $k$ new layers naturally defines an edge coloring of $H_t$ with $k$ colors.
All edges corresponding to the gates in the same layer are assigned a unique color.
Since $H_t$ can be colored with at most $\Delta(H_t)+1$ many colors (by Vizing's Theorem), introducing $\Delta(H_t)+1$ empty layers prior to the swap layer $S_t$ suffices to ensure any depth-optimal gate insertion is feasible.

To obtain a depth-optimal gate insertion, we develop an integer programming model.
The model contains binary \emph{layer} variables $u_{tb}\in \{0,1\}$ for all $t\in [T+1],\,b\in [\Delta(H_t) + 1]$, where $u_{tb}=1$ indicates that the $b$-th layer prior to the swap layer $S_t$ is used.
Additionally, it contains binary \emph{scheduling} variables $a_{pqtb}\in \{0,1\}$ for all $(p,q)\in C,\, t\in T_{pq},\, b\in [\Delta(H_t) + 1]$, where $a_{pqtb}=1$ indicates that gate $(p,q)$ is applied in the $b$-th empty layer prior to the swap layer $S_t$.
For all $(p,q)\in C,\, t\in T^s_{pq}$, the model contains binary variables $a_{pqt0}\in\{0,1\}$, where $a_{pqt0}=1$ indicates that gate $(p,q)$ is scheduled in the swap layer $S_t$.
The integer program is formulated as follows:
\begin{figure}[t]
    \centering
    \includegraphics[page=13, width=0.7\textwidth]{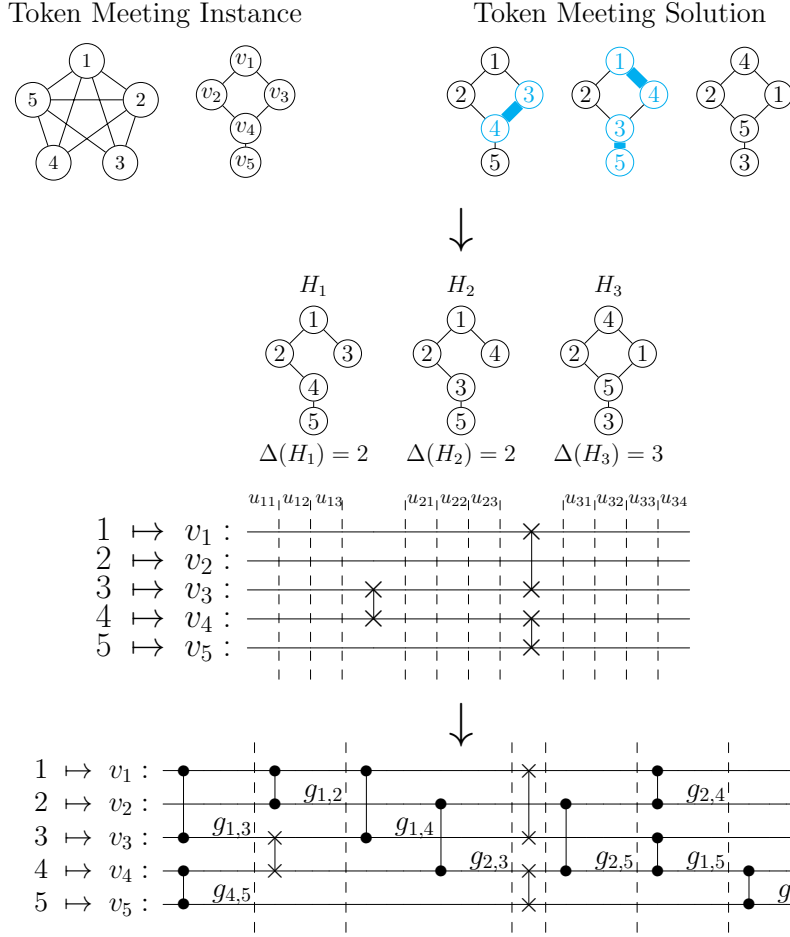}
    \caption{Visualization of the process of turning a TMP solution into a routed circuit.}
    \label{fig:scheduling_ip_illustration}
\end{figure}
\begin{subequations}\label{model:4model}
    \begin{align}
    \min &\sum_{t\in [T+1]}\sum_{b\in [\Delta(H_t)+1]}u_{tb}&&
    \label{model:4model_obj}\\
    &\sum_{t\in T^s_{pq}}a_{pqt0} + \sum_{t\in T_{pq}}\sum_{b\in [\Delta(H_t) + 1]}a_{pqtb} = 1, \quad&& (p, q)\in C,
    \label{model:4model_schedule_gate}\\
    &\sum_{q\in N(p):\,  t \in T_{p, q}}a_{pqtb} \leq u_{tb}, \quad&&  p\in Q,\,  t\in [T+1],\, b\in [\Delta(H_t)+1],
    \label{model:4model_newlayer_matching}\\
    &\sum_{q\in N(p):\, t \in T^s_{p, q}}a_{pqt0} \leq 1, \quad && p\in Q\setminus f^{-1}_t(M_t),\, t\in [T],
    \label{model:4model_swaplayer_matching}\\
    &u_{t(b + 1)}\leq u_{tb}, \quad && t\in [T],\, b\in [\Delta(H_t)],
    \label{model:4model_symmetry}\\
    &u_{tb}\in\{0,1\}, \quad&& t\in [T+1],\, b\in [\Delta(H_t)+1],
    \label{model:4model_uvar}\\
    &a_{pqtb}\in \{0,1\}, \quad&&  (p,q)\in G,\, t\in T_{pq},\,b\in [\Delta(H_t)+1],
    \label{model:4model_avar_new}\\
    &a_{pqt0}\in \{0,1\}, \quad&&  (p,q)\in G,\, t\in T^s_{pq}.
    \label{model:4model_avar_old}
\end{align}
\end{subequations}
The constraints \eqref{model:4model_schedule_gate} ensure that every gate is scheduled in exactly one layer.
Constraints \eqref{model:4model_newlayer_matching} and \eqref{model:4model_swaplayer_matching} ensure that no two gates sharing a node are scheduled in the same layer.
Furthermore, constraints \eqref{model:4model_newlayer_matching} link the scheduling and layer usage variables.
The constraints \eqref{model:4model_symmetry} impose an ordering on the layer usage variables, thereby breaking symmetry of the model.
The objective \eqref{model:4model_obj} aims to minimize the number of additionally introduced layers.
Finally, \eqref{model:4model_uvar} - \eqref{model:4model_avar_old} define the ranges of the variables.

Note that scheduling all swap gates belonging to the same matching in a single swap layer might exclude depth-optimal solutions as it can be beneficial to distribute these swap gates over multiple layers.
We use this modeling choice since it allows for a compact integer programming model.

\textbf{Comparison with heuristics.}
We compare our compilation approach against established heuristics in terms of runtime, number of swaps and the depth of the routed circuits.
The following methods are evaluated:

\textbf{(CQR)} A TMP solution is computed by solving model~\hyperlink{model:BPZ<}{\text{ZIEQ}}, followed by the construction of a routed quantum circuit using the scheduling integer program~\eqref{model:4model}.
For (CQR), the runtime (SCHE) of the scheduling integer program~\eqref{model:4model} runtime is also reported separately; the runtime of (CQR) is the sum of the runtimes of \hyperlink{model:BPZ<}{\text{ZIEQ}} and (SCHE).

\textbf{(SAB)} The circuit is transpiled using the SABRE compiler from Qiskit 2.1.0.~\cite{li2019tackling,qiskit2024}.
To ensure a fair comparison, \texttt{SabreLayout} is executed with \texttt{swap\_trials} and \texttt{layout\_trials} both set to 1000, allowing more iterations than the default settings.

\textbf{(WEI)} The circuit is transpiled using the \texttt{Commuting2qGateRouter} method in Qiskit, proposed by Weidenfeller et al.~\cite{Weidenfeller_2022}.
This method requires a user-supplied swap-layer strategy and an edge coloring of the hardware graph.
The performance is highly sensitive to these inputs.
To ensure a fair comparison, we supply a swap-layer strategy attaining $\ML(H, K_n, \MT(H, K_n))$ swaps together with a minimal edge coloring.
The corresponding swap strategies and edge colorings for $H_1$ and $H_2$ are shown in \Cref{fig:computational_hardware_graphs_Kn_solution,fig:computational_hardware_graphs_edge_coloring}.

\begin{figure}[t]
    \centering
    \begin{subfigure}[h]{0.8\linewidth}
        \centering
        \includegraphics[page=10, width=\textwidth]{tikz_figures.pdf}
    \caption{
        Solutions with $\ML(H, K_n, \MT(H, K_n))$ swaps and $\MT(H, K_n)$ steps for $(H_1, K_9)$ (top) and $(H_2, K_8)$ (bottom).
        For each $t\in [T]$, the placement $f_t$ is shown and the edges involved in a swap are highlighted in blue.
    }
    \label{fig:computational_hardware_graphs_Kn_solution}
    \end{subfigure}
    \hfill
    \begin{subfigure}[h]{0.15\linewidth}
        \centering
        \includegraphics[page=11, width=\textwidth]{tikz_figures.pdf}
        \caption{
            Edge colorings of $H_1$ and $H_2$.
    }
\label{fig:computational_hardware_graphs_edge_coloring}
\end{subfigure}
\label{fig:computational_hardware_graphs_sol_col}
\end{figure}

The results for runtime, number of swap gates, and depth are shown in \Cref{fig:eval_h1_swap_scaled,fig:eval_h2_swap_scaled,fig:eval_h1_depth_scaled,fig:eval_h2_depth_scaled,fig:eval_h1_time_logscale,fig:eval_h2_time_logscale}.

\begin{figure}
    \begin{subfigure}[h]{0.4\linewidth}
        \centering
        \includegraphics[width=\textwidth]{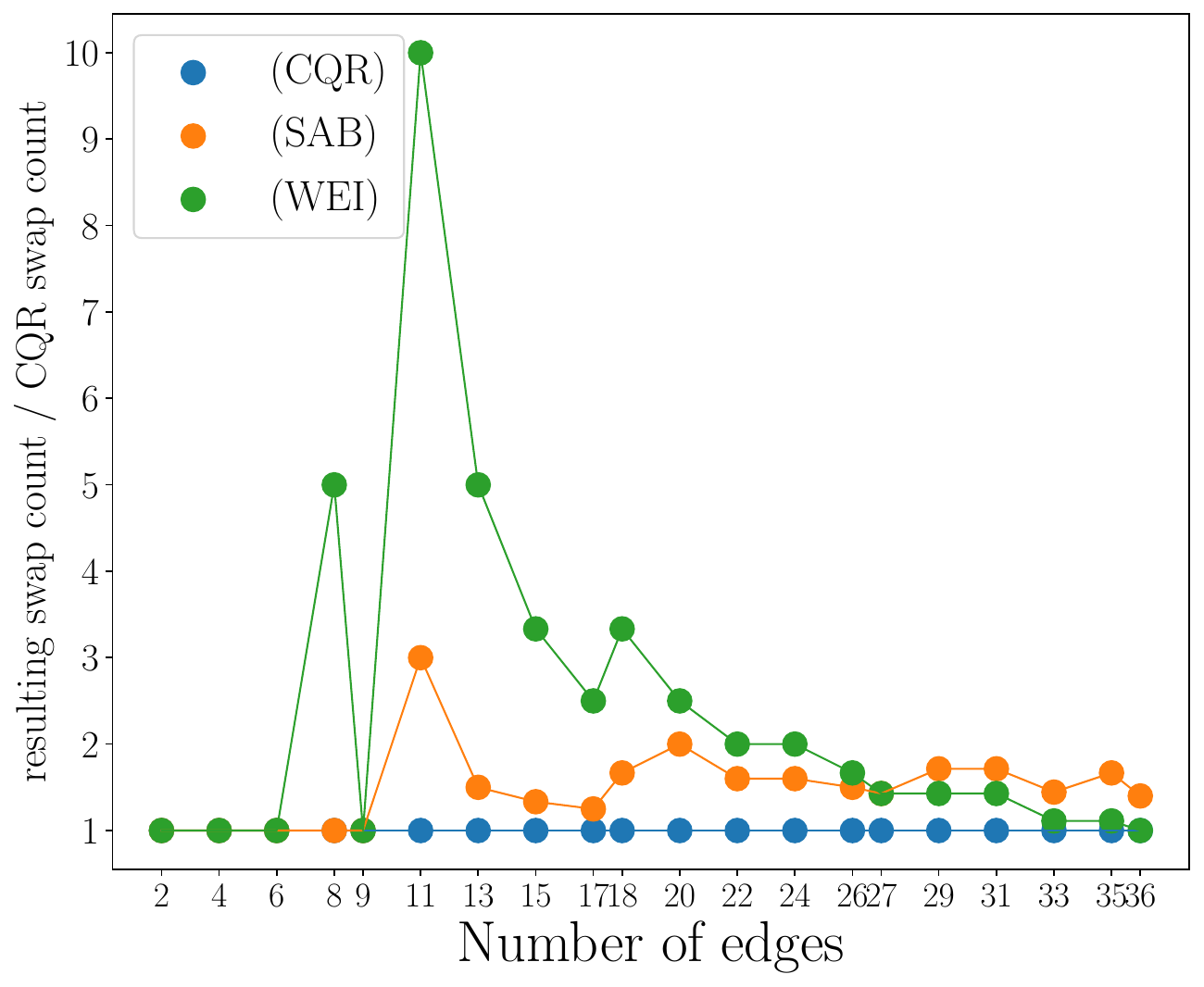}
        \caption{
            Swap comparison on $H_1$.
        }
        \label{fig:eval_h1_swap_scaled}
    \end{subfigure}
    \hfill
    \begin{subfigure}[h]{0.4\linewidth}
        \centering
        \includegraphics[width=\textwidth]{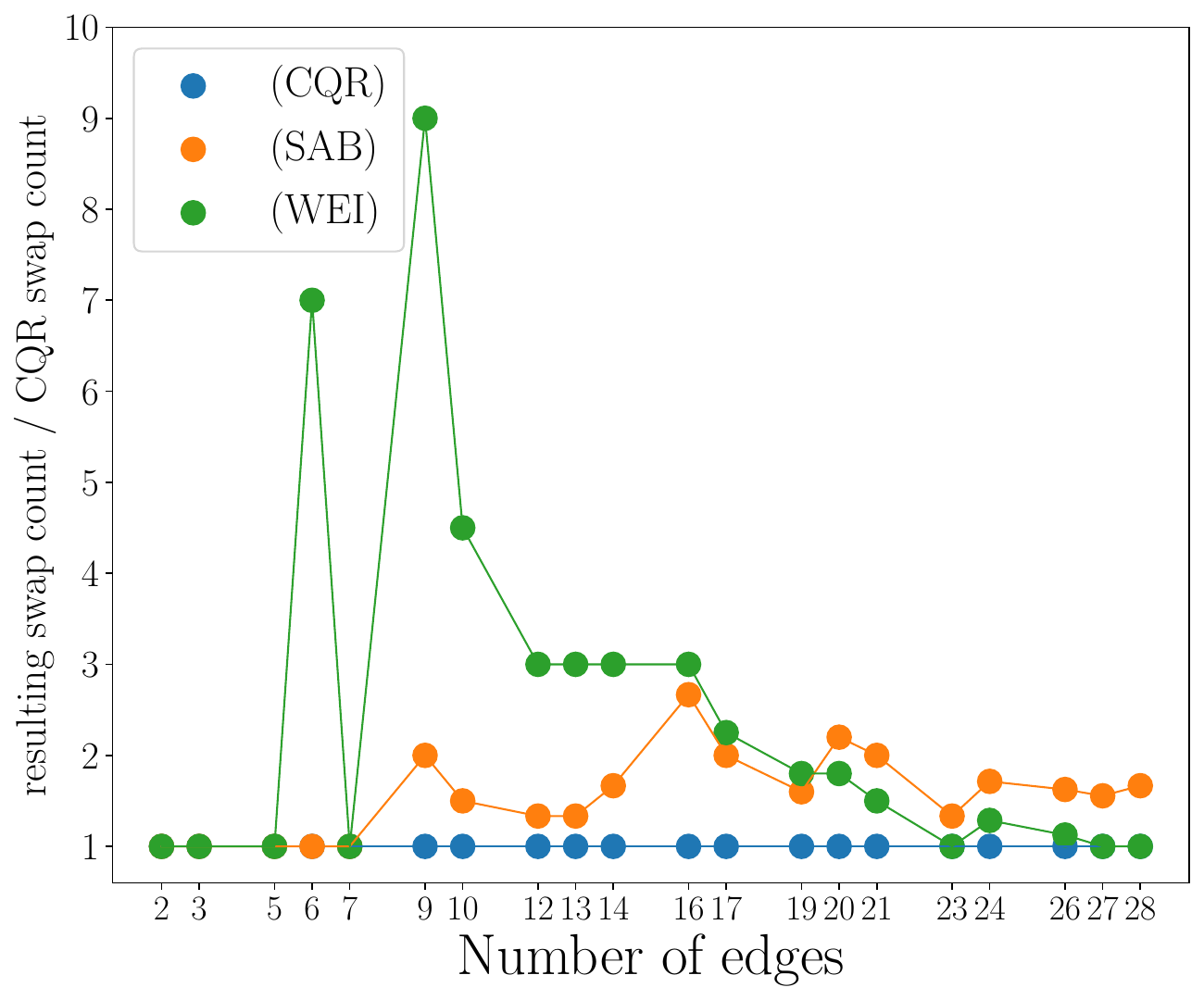}
        \caption{
            Swap comparison on $H_2$.
        }
        \label{fig:eval_h2_swap_scaled}
    \end{subfigure}
    \begin{subfigure}[h]{0.4\linewidth}
        \centering
        \includegraphics[width=\textwidth]{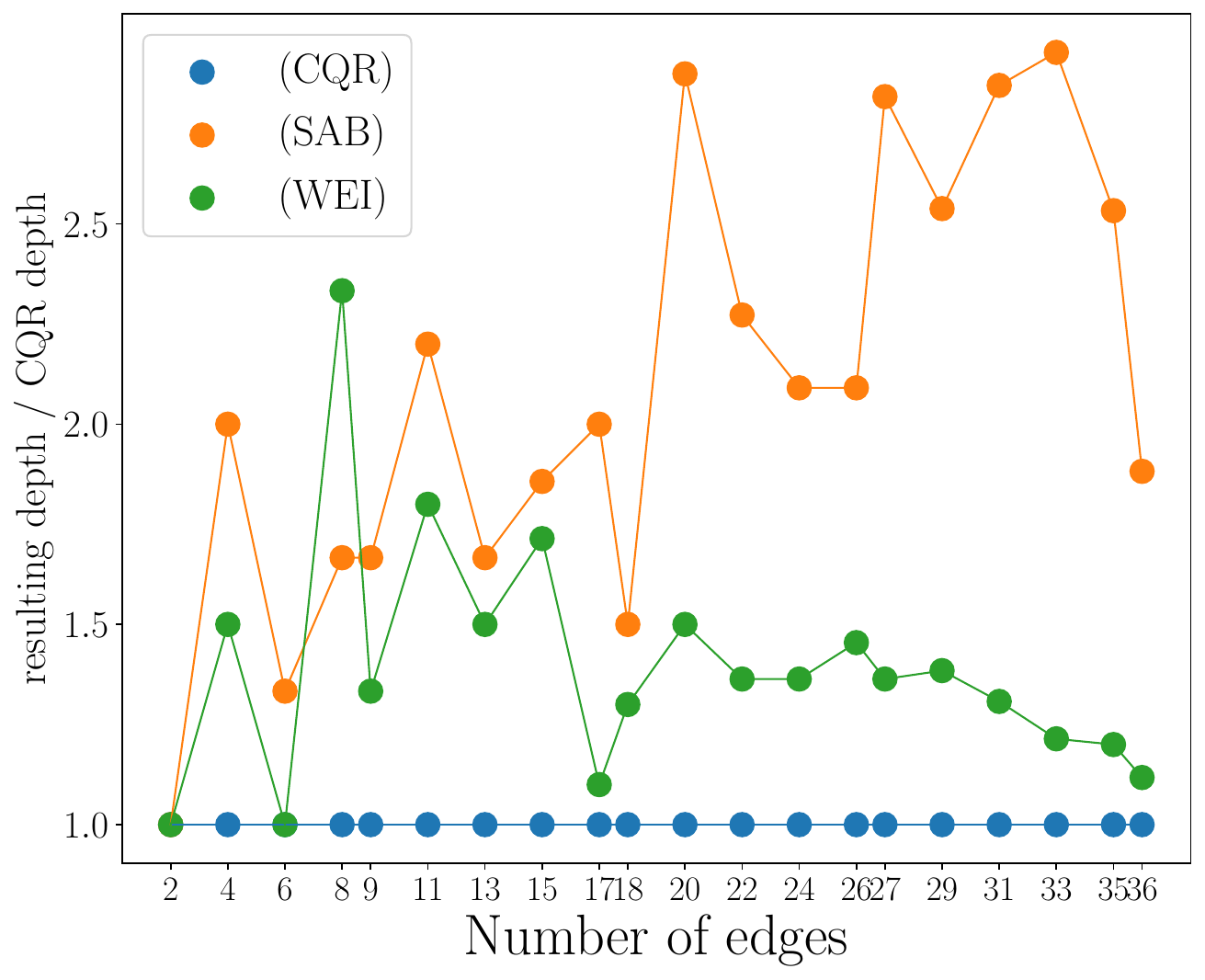}
        \caption{
            Depth comparison on $H_1$.
        }
        \label{fig:eval_h1_depth_scaled}
    \end{subfigure}
    \hfill
    \begin{subfigure}[h]{0.4\linewidth}
        \centering
        \includegraphics[width=\textwidth]{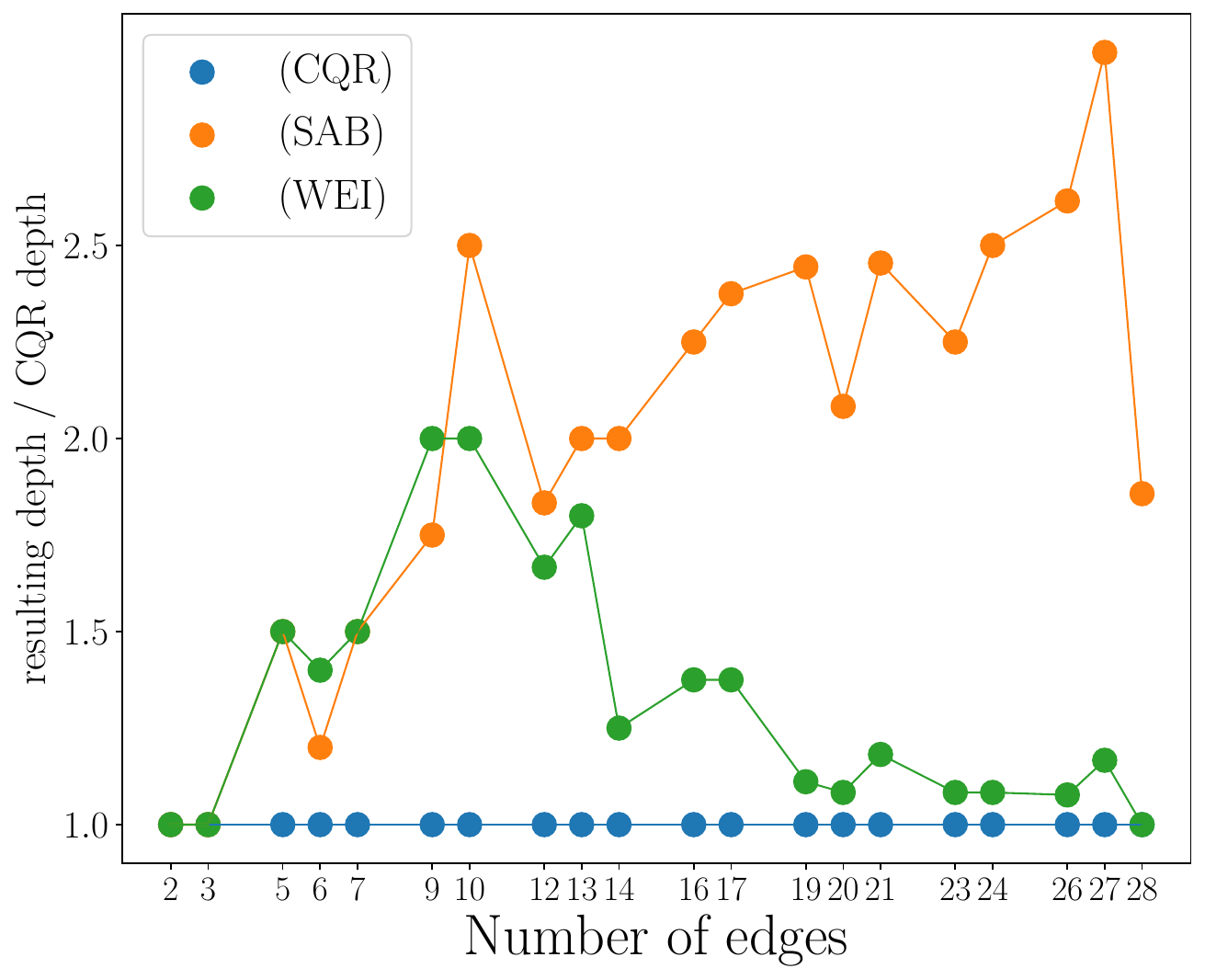}
        \caption{
            Depth comparison on $H_2$.
        }
        \label{fig:eval_h2_depth_scaled}
    \end{subfigure}
    \begin{subfigure}[h]{0.4\linewidth}
    \centering
    \includegraphics[width=\textwidth]{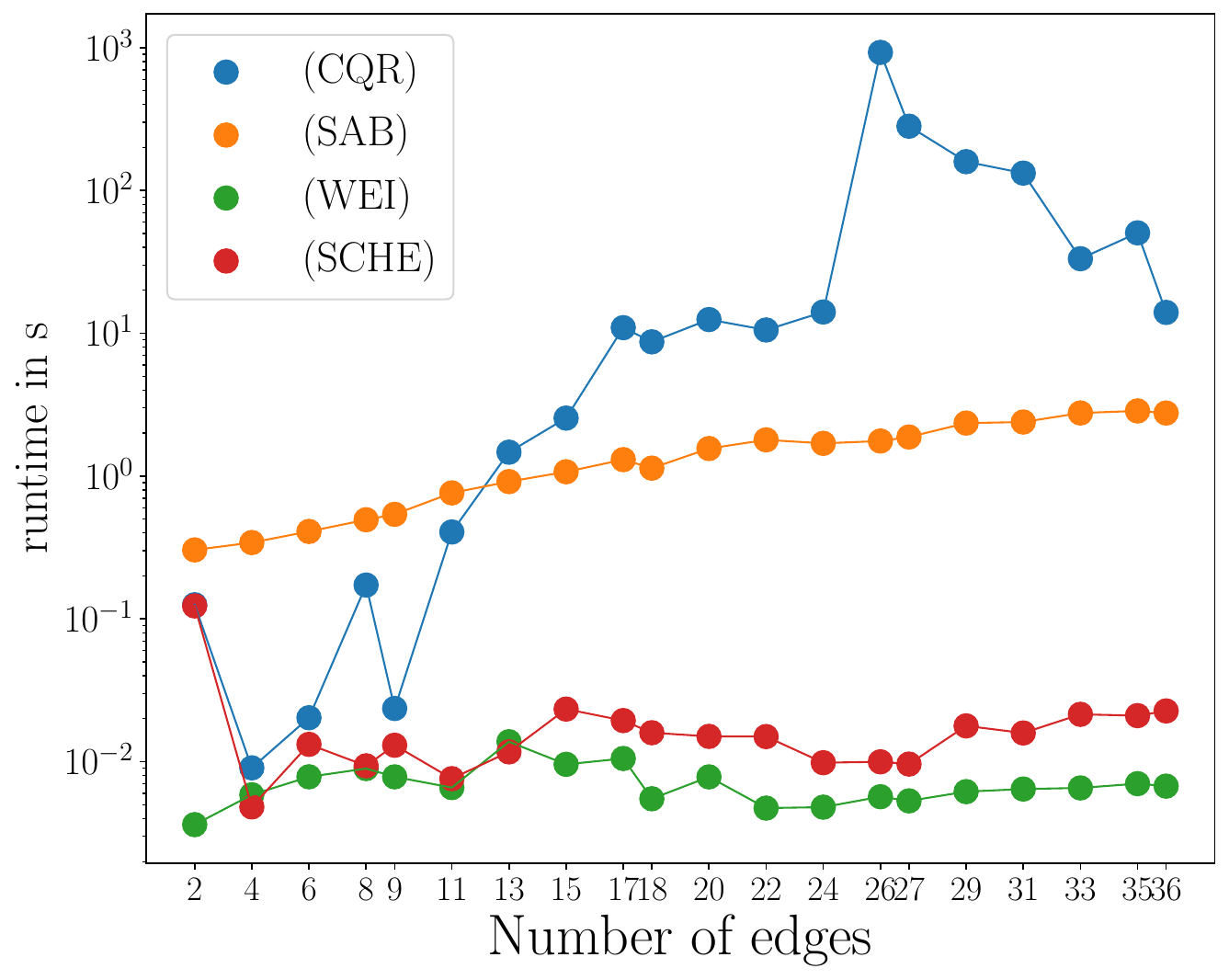}
    \caption{
        Runtime comparison on $H_1$.
    }
    \label{fig:eval_h1_time_logscale}
    \end{subfigure}
    \hfill
    \begin{subfigure}[h]{0.4\linewidth}
    \centering
    \includegraphics[width=\textwidth]{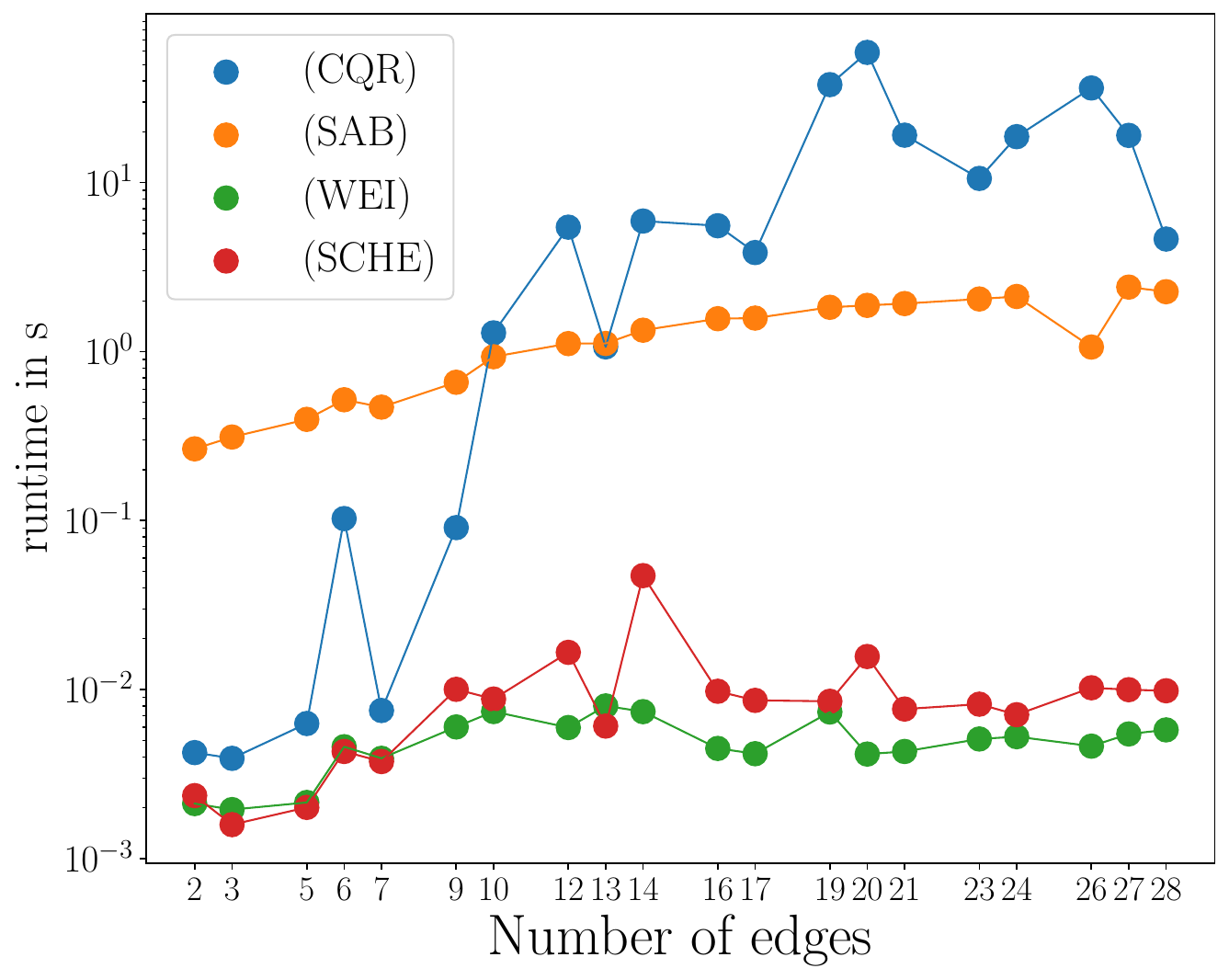}
    \caption{
        Runtime comparison on $H_2$.
    }
    \label{fig:eval_h2_time_logscale}
    \end{subfigure}
    \caption{Comparison with heuristics.}
    \label{fig:evalh1h2_heur}
\end{figure}
\begin{figure}
    \centering
    \subfloat{
        \includegraphics[width=0.425\textwidth]{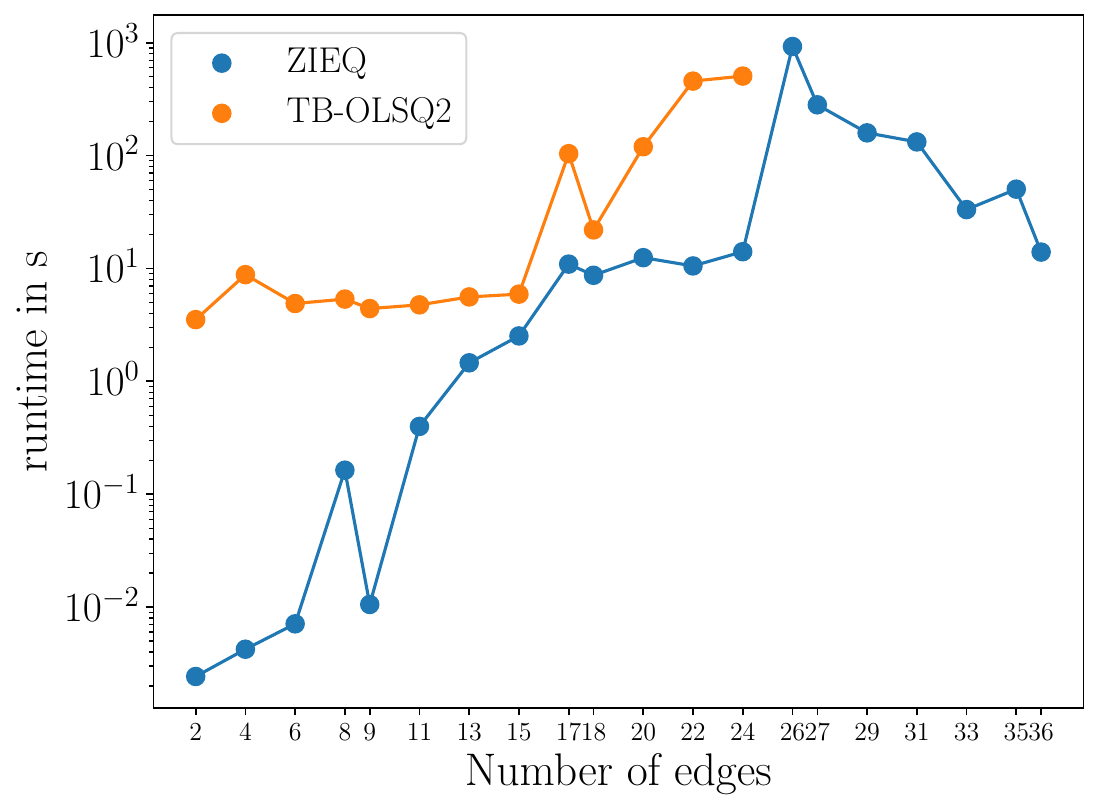}
    }
    \hspace{0.09\textwidth}
    \subfloat{
        \includegraphics[width=0.425\textwidth]{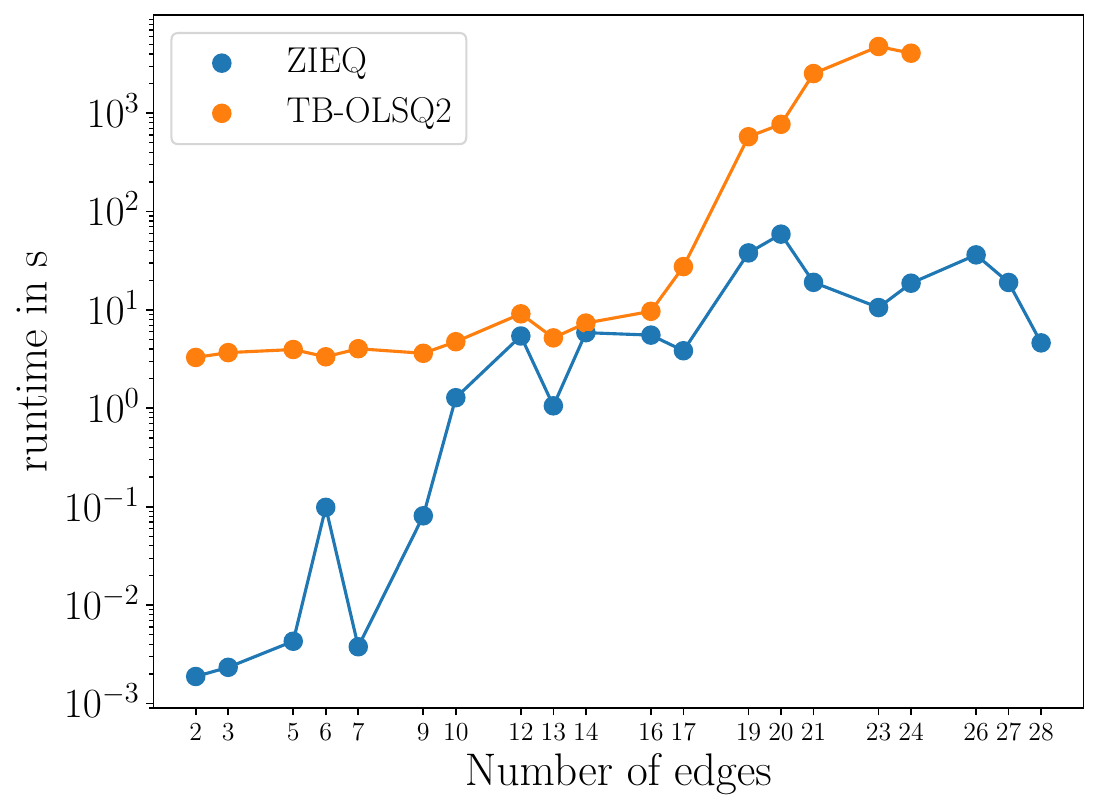}
    }
    \caption{
        Runtime comparison between \protect\hyperlink{model:BPZ<}{\text{ZIEQ}} and TB-OLSQ2 for $H_1$ (left) and $H_2$ (right).
    }
    \label{fig:eval_olsqtimesvscqrtimes}
\end{figure}

\textbf{Swap count.}
On average, (SAB) uses 49 \% and 56 \% more swaps than (CQR) for $H_1$ and $H_2$, respectively.
By construction, (WEI) uses the same number of swaps as (CQR) for the complete algorithm graph.
However, for instances with a sparse algorithm graph, (WEI) uses up to 10 times more swaps than (CQR), indicating that
the performance of (WEI) heavily depends on the instance density.
    
\textbf{Circuit depth.}
The depth gap between (CQR) and (SAB) tends to increase with the number of edges in the algorithm graph.
For dense algorithm graphs, (SAB) consistently returns circuits more than twice as deep as (CQR).
For (WEI), the depth gap to (CQR) decreases with the number of edges while (WEI) returns at most twice as deep circuits.
We conclude that for dense circuits, (WEI) yields routed circuit of low depth if given step-optimal solutions for $(H, K_n)$ and minimum edge colorings.

\textbf{Runtime.}
Being an exact approach, (CQR) 
requires significantly more time than (SAB) and (WEI), with maximum runtimes of 926 s and 59 s for $H_1$ and $H_2$.
The maximum runtime of (SAB) and (WEI) are consistently in the order of 2 s and 0.01 s, respectively.
For small-sized instances, (CQR) is faster than (SAB), which is due to the settings chosen for (SAB).
(SCHE) is solved very quickly, requiring at most 0.12 s for both $H_1$ and $H_2$.

In summary, (WEI) is effective when a high-quality swap strategy and a minimum edge coloring of the hardware graph are known and the algorithm graph is dense.
(SAB) is well-suited for sparse algorithm graphs.
(CQR) yields the best overall results in terms of both swap count and depth with significant improvements, using up to ten times less swaps and up to three times shallower circuits.
This improvement comes at the cost higher runtime.
However, for applications like QAOA where many different circuits are executed that all have the same algorithm graph, this time needs to be invested only once.
Moreover, the strong reduction in swap count and depth significantly reduce the error rates on current noisy devices.
Finally, we note that the scheduling approach (SCHE) is several orders of magnitude faster than the integrated algorithm and thus might be applicable to much larger instances, yielding high-quality schedules for approaches like (WEI).

\textbf{Comparison with a near-exact method.}
Next, we compare to TB-OLSQ2~\cite{lin2023scalable}, a near-optimal state-of-the-art SMT-based approach for solving the qubit routing problem with commuting gates.
We did not compare to \cite{yang2024quantum} or \cite{shaik2024optimal}, since \cite{shaik2024optimal} applies only to circuits with CNOT two-qubit gates, and \cite{yang2024quantum} did not publicly share their code.
Overall, we compare model \hyperlink{model:BPZ<}{\text{ZIEQ}} to TB-OLSQ2.
Specifically, we use the first step of their TB-OLSQ2-QAOA procedure, which applies the TB-OLSQ2 model to minimize the number of swaps.
Their model outputs a sequence of gate blocks and swap gates without explicit scheduling, which is equivalent to a TMP solution.
Therefore, we do not perform circuit scheduling but directly compare \hyperlink{model:BPZ<}{\text{ZIEQ}} with TB-OLSQ2 in terms of runtime and the number of swaps.
We use the \texttt{z3-solver} 4.15.1.0~\cite{de2008z3} as well as \texttt{pySAT} 1.8.dev17 for TB-OLSQ2, and impose a time limit of 7200 s for both TB-OLSQ2 and \hyperlink{model:BPZ<}{\text{ZIEQ}}.
Since both methods solve the TMP problem for a fixed number of steps and terminate upon finding a feasible solution, they should yield a solution with the same number of swaps.
Indeed, TB-OLSQ2 returned solutions with the same swap count for all instances but on the three instances of $H_1$ with 4, 6 and 9 nodes and the two instances of $H_2$ with 3 and 7 edges, where TB-OLSQ2 returned solutions with one more swap than \hyperlink{model:BPZ<}{\text{ZIEQ}}.
The runtimes are displayed in \Cref{fig:eval_olsqtimesvscqrtimes}.
TB-OLSQ2 failed to terminate within 7200 s for instances with 26+ edges for both $H_1$ and $H_2$.
For the instances it did solve, TB-OLSQ2 consistently required significantly more runtime.
Although TB-OLSQ2 solves small instances quickly, the runtime gap to \hyperlink{model:BPZ<}{\text{ZIEQ}} increases rapidly with the size of the instances.

\section{Conclusion}
In this work, we addressed the problem of qubit routing with commuting gates, which arises during the compilation of specific quantum algorithms, for example, the well-known Quantum Approximate Optimization Algorithm.
We developed a two-step decomposition approach, which is optimal in terms of inserted swap gates.
The first part consists of solving an instance of the Token Meeting Problem (TMP), whose solution yields a sequence of swap layers.
From a TMP solution, we construct a routed quantum circuit via a scheduling step.
The scheduling inserts gates into either swap layers or new layers, aiming to minimize the depth of the resulting circuit.

Via analysis of the TMP, we have shown that any instance of qubit routing with commuting gates can be solved using $\mathcal{O}(n^2)$ swaps.
On the other hand, there exist instances which require $\Omega(n^2)$ swaps. 
Having proven $\mathcal{NP}$-hardness, we proposed to use integer programming methods for solving TMP instances.
To this end, we developed and compared several integer programming models.
Among these, the \hyperlink{model:BPZ<}{\text{ZIEQ}} model showed the best computational performance.
We investigated symmetry-breaking techniques, which reduced runtime by a factor of up to $12$ in our experiments for instances with the complete algorithm graph.
Moreover, we derived linear descriptions of polytopes appearing as substructures in our proposed models, which lead to a speed-up of up to a factor of roughly 5 in our experiments.
Since we have proven these linear descriptions in a generalized setting, they might have applications beyond this work.
Furthermore, we proposed an integer program for the scheduling step which has runtimes below $0.12$~s in our experiments and might thus scale to larger circuit sizes.
Our computational experiments indicate that although exact models do not scale to large instances, they provide valuable insights in the performance of heuristics.

Our work leaves interesting open problems for further research:
\begin{itemize}
    \item Is \Cref{conjecture:tree_steps} true: can any TMP instance with $n$ nodes be solved in $\mathcal{O}(n)$ steps?
    \item Are there polynomial time approximation algorithms for STMP or PTMP for specific hardware graphs? Do the problems remain $\mathcal{NP}$-hard if one fixes the first bijection or considers specific hardware graphs?
    \item Could additional symmetry-breaking techniques or separating inequalities help speed up the solution of the integer programs?
\end{itemize}

\section*{Acknowledgements}
The authors thank Tobias Kuen, Christoph Hertrich, Eric Stopfer and Frauke Liers for helpful comments on the manuscript.
This research is part of the Munich Quantum Valley, which is supported by the Bavarian state government with funds from the Hightech Agenda Bayern Plus.

\bibliographystyle{unsrturl}  

\bibliography{./bibliography}

\appendix
\section{Proof of NP-completeness}\label{app:complexity}
\label{appendix}

In this section, we reduce \textsc{Subgraph Isomorphism}, which is well known to be $\mathcal{NP}$-complete~\cite{pruim2005complexity} to PTMP and STMP.

\begin{problem}\textsc{Subgraph Isomorphism}\\
	\textbf{Input:} A pair of undirected graphs $(G_1, G_2)$.\\
	\textbf{Question:} Is $G_2$ isomorphic to a subgraph of $G_1$?
\end{problem}

\begin{lemma}\label{lem:subgraph_isomorphism}
	Let $(H, A)$ be a TMP instance.
	Then,
	\begin{equation*}
		\ML(H, A)=0 \quad\iff \quad\MT(H, A)= 0 \quad\iff\quad  A \text{ is isomorphic to a subgraph of } H.
	\end{equation*}
\end{lemma}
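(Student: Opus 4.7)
The plan is to chase definitions: all three conditions are essentially reformulations of ``there exists an initial bijection that already realises every connection as an edge'', so the proof reduces to carefully unpacking what $\ML(H,A)=0$, $\MT(H,A)=0$, and the subgraph-isomorphism condition mean.

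First I would establish $\ML(H,A)=0 \iff \MT(H,A)=0$. One direction is immediate from the preliminary bound $\MT(H,A)\leq \ML(H,A)$ in \Cref{lem:mt_ml_bounds}: if $\ML(H,A)=0$ then $\MT(H,A)=0$. For the converse, observe that a solution with $\MT(H,A)=0$ steps consists of just an initial bijection $f_1$ with an empty sequence $\mathcal{S}=()$ of matchings, so its swap count $\sum_{t=1}^{0}|M_t|=0$; hence $\ML(H,A)\leq 0$.

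Next I would show $\MT(H,A)=0 \iff A$ is isomorphic to a subgraph of $H$. If $\MT(H,A)=0$, there is a bijection $f_1: Q\to V$ such that the adjacency condition holds with $t=1$ for every connection, i.e., $\{p,q\}\in C$ implies $\{f_1(p),f_1(q)\}\in E$. This means $f_1$ is an injective graph homomorphism from $A$ into $H$, so the image $f_1(A)$ is a subgraph of $H$ isomorphic to $A$. Conversely, if $A$ is isomorphic to a subgraph of $H$, then since by convention $|Q|=|V|=n$ (all TMP solutions require a bijection), the isomorphism extends to a bijection $f_1: Q\to V$ with $\{p,q\}\in C\Rightarrow \{f_1(p),f_1(q)\}\in E$. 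Taking $(f_1, ())$ as a candidate solution witnesses $\MT(H,A)=0$.

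There is no genuine obstacle; the only subtlety is the bookkeeping convention that TMP instances satisfy $|Q|=|V|$, which makes ``subgraph of $H$'' here effectively mean ``spanning subgraph''. Once this is noted, the three equivalences are direct from definitions, and no further argument is needed.
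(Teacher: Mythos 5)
Your proof is correct and follows essentially the same route as the paper's: the paper also derives $\ML(H,A)=0\iff\MT(H,A)=0$ from \Cref{lem:mt_ml_bounds} and then unpacks the zero-step case as the existence of a bijection $f_1$ with $\{f_1(p),f_1(q)\}\in E$ for every connection, which is the subgraph-isomorphism condition. Your extra remark about $|Q|=|V|$ forcing the subgraph to be spanning is a harmless clarification the paper leaves implicit.
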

\begin{proof}
	We obtain $\MT(H, A)=0 \iff \ML(H, A)=0$ from \Cref{lem:mt_ml_bounds}.
	On the other hand, we have $\MT(H, A)=0$ if and only if there is a solution $(f_1)$ such that for each connection $(p, q)$, we have $\{f_1(p),f_1(q)\}\in E$, which is equivalent to $A$ being isomorphic to a subgraph of $H$.
\end{proof}

\Cref{lem:subgraph_isomorphism} gives us a straightforward way to reduce \textsc{Subgraph Isomorphism} to both STMP and PTMP.
However, we have to modify the instance of \textsc{Subgraph Isomorphism} to two graphs of the same size such that the first graph is connected, which is required for both STMP and PTMP.

\begin{proof}[Proof of \Cref{thm:NPcompleteness}]
	We have already shown that STMP and PTMP are in $\mathcal{NP}$.
	For $\mathcal{NP}$-hardness, we reduce \textsc{Subgraph Isomorphism} to STMP and PTMP.
	Let $(G_1$, $G_2)$ be an instance of \textsc{Subgraph Isomorphism}.
	We construct the two connected graphs $G'_1, G'_2$ with $G'_i=(V(\hat{G}_i)\cup \{v_i^*\}, E(\hat{G}_i)\cup \{\{v_i^*, v\}: v\in V(\hat{G}_i)\})$, where $\hat{G}_i$ is the graph obtained by adding isolated nodes to $G_i$ until $|V(\hat{G_i})|=|V(G_1)|$ holds for $i=1,2$.
    See \Cref{fig:np_graph_transformation} for an illustration.
	Note that the encoding size of $G'_i$ is bounded by a polynomial in the encoding size of $G_1$ and $G_2$.
	By definition, $G_2$ is isomorphic to a subgraph of $G_1$ if and only if $\hat{G}_2$ is isomorphic to a subgraph of $\hat{G}_1$.
	We now show that $G_2$ is isomorphic to a subgraph of $G_1$ if and only if $G'_2$ is isomorphic to a subgraph of $G'_1$.
	The ``if'' direction holds by definition.
	For the ``only if'' direction, suppose that $G'_2$ is isomorphic to a subgraph of $G'_1$.
	Let $\sigma:V(G'_2)\mapsto V(G'_1)$ be a bijection such that $(i,j)\in E(G'_2)$ implies $(\sigma(i),\sigma(j))\in E(G'_1)$.
	We know that $\sigma$ maps $v_2^*$ to a node in $V(G'_1)$ which is connected to all other nodes.
	If $\sigma$ maps $v_2^*$ to a node $v^*\neq v_1^*$, we can ``exchange'' $v^*$ and $v_1^*$.
	Consequently, we define the bijection $\sigma':V(G'_2)\mapsto V(G'_1)$ with $\sigma'(v_2^*)=v_1^*$, $\sigma'(\sigma^{-1}(v_1^*))=\sigma(v_2^*)$, and $\sigma'(v)=\sigma(v)$ for all other nodes $v\in V(G'_2)$.
	Thus, $(i,j)\in E(G'_2)$ still implies $(\sigma'(i),\sigma'(j))\in E(G'_1)$.
	Restricting $\sigma'$ to the nodes of $\hat{G}_1$, we obtain that $\hat{G}_2$ is isomorphic to a subgraph of $\hat{G}_1$ and thus also $G_2$ is isomorphic to a subgraph of $G_1$.
	Finally, for both STMP and PTMP, we construct the instance with $A=G'_2$, $H=G'_1$, and $k=0$.
	\Cref{lem:subgraph_isomorphism} establishes that the answer to STMP and PTMP is yes if and only if the answer to \textsc{Subgraph Isomorphism} is yes.
\end{proof}

\begin{figure}
\centering
\includegraphics[page=12, width=0.675\textwidth]{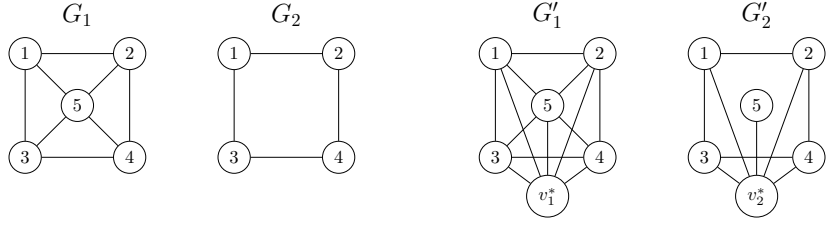}
\caption{Transforming a \textsc{Subgraph Isomorphism} instance $(G_1, G_2)$ into a STMP and PTMP instance $(G'_1, G'_2)$ as defined in the proof of \Cref{thm:NPcompleteness}.}
\label{fig:np_graph_transformation}
\end{figure}

\section{Linear Descriptions of Polytopes}\label{app:polytope_proofs}
Given a set of variables $x$ defined over a set $S$ and a subset $I\subseteq S$, we write $x(I)\define \sum_{i\in I}x_i$.
For an overview of polyhedral theory within integer programming, we refer to \cite{schrijver1998theory}.
\subsection{Linearization 1}\label{app:sec_Ylinearization}
The goal of this subsection is to derive a linear description of $P^\leq_{pq}$.
Recall that 
\begin{align*}
    P^\leq_{pq}= \conv\{(w^t_{pi}, w^t_{qj}, y^t_{pqij})\in \{0,1\}^{T(2|V|+|\bar{E}|)}:\;& \eqref{model:1model_assignnode2qubit}, \eqref{model:1amodel_gate_y}-\eqref{model:1amodel_mccormick2}
    \}.
\end{align*}
More generally, we study the following polytope.
\begin{definition}
    Let $G=(X\;\dot{\cup}\; Y, E)$ be a bipartite graph and let $\mathcal{P}_X$ and $\mathcal{P}_Y$ be partitions of $X$ and $Y$, respectively, that is, $\dot{\bigcup}_{I\in \mathcal{P}_X}I=X$ and $\dot{\bigcup}_{J\in \mathcal{P}_Y}J=Y$.
    We define
    \begin{align*}
        Q_{\mathcal{P}_{XY}}(G)\define \conv\{(x,y,z)\in \{0,1\}^{X\cup Y \cup E}:\eqref{con:q1} - \eqref{con:q5}\},
    \end{align*}
    where
    \begin{align}
        &x(I)=1,\quad && I\in \mathcal{P}_X,\label{con:q1}\\
        &y(J)=1,\quad && J\in \mathcal{P}_Y,\label{con:q2}\\
        &z_{ij} \leq x_i,\quad &&\{i,j\}\in E,\label{con:q3}\\
        &z_{ij} \leq y_j,\quad &&\{i,j\}\in E,\label{con:q4}\\
        &z(E)\geq 1.&&\label{con:q5}
    \end{align}
\end{definition}
The polytope $P^\leq_{pq}$ is isomorphic to $Q_{\mathcal{P}_{XY}}(G_{pq})$, where $G_{pq}=(\bigcup_{t=1}^T V^t_p\,\dot{\cup}\, \bigcup_{t=1}^T V^t_q, E_{pq})$ is a bipartite graph with
\begin{align*}
    &V^t_p \define \{(t, p, i): i\in V\},\quad 
    V^t_q \define \{(t, q, i): i\in V\},\\
    &E_{pq}\define \{\{(t, p, i), (t, q, j)\}: (i, j)\in \bar{E},\;t\in [T]\},
\end{align*}
and the partitions $\mathcal{P}_X=\{V^1_p,\dots,V^T_p\}$ and $\mathcal{P}_Y=\{V^1_q,\dots,V^T_q\}$.
We now provide a linear description for a special case of the polytope $Q_{\mathcal{P}_{XY}}(G).$
\begin{theorem}\label{thm:convexhull_PXY_ieq_expanded}
    Let $G=(X\,\dot{\cup}\, Y, E)$ be a bipartite graph and let $\mathcal{P}_X$ and $\mathcal{P}_Y$ be partitions of $X$ and $Y$,
    respectively, such that for all $i\in X$ there is a $J\in \mathcal{P}_Y$ with $N(i)\subseteq J$,
    and for all $j\in Y$ there is a $I\in \mathcal{P}_X$ with $N(j)\subseteq I$.
    Then,
    \begin{equation*}
        Q_{\mathcal{P}_{XY}}(G)=P\define  \{(x,y,z)\in [0,1]^{X\cup Y\cup E}: \eqref{con:q1},\eqref{con:q2},\eqref{con:q5}-\eqref{con:q7}\},
    \end{equation*}
    where
    \begin{align}
        &\sum_{j \in N(i)}z_{ij} \leq x_i,\quad && i \in X,\label{con:q6}\\
        &\sum_{i \in N(j)}z_{ij} \leq y_j,\quad && j \in Y.\label{con:q7}
    \end{align}
\end{theorem}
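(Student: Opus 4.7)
The plan is to establish the two inclusions $Q_{\mathcal{P}_{XY}}(G) \subseteq P$ and $P \subseteq Q_{\mathcal{P}_{XY}}(G)$ separately.

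For the inclusion $Q_{\mathcal{P}_{XY}}(G) \subseteq P$, it suffices to verify that every 0-1 point $(x, y, z)$ satisfying \eqref{con:q1}--\eqref{con:q5} also satisfies the aggregated inequalities \eqref{con:q6} and \eqref{con:q7}. Fix $i \in X$. If $x_i = 0$, then \eqref{con:q3} forces $z_{ij} = 0$ for every $j \in N(i)$, so \eqref{con:q6} is trivial. If $x_i = 1$, invoke the hypothesis that $N(i) \subseteq J$ for some $J \in \mathcal{P}_Y$; combined with $y(J) = 1$, exactly one $j \in J$ satisfies $y_j = 1$, and then \eqref{con:q4} implies $z_{ik} = 0$ for every $k \in N(i) \setminus \{j\}$, so $\sum_{k \in N(i)} z_{ik} \leq 1 = x_i$. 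This gives \eqref{con:q6}; the verification of \eqref{con:q7} is entirely symmetric.

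The reverse inclusion $P \subseteq Q_{\mathcal{P}_{XY}}(G)$ is the harder direction and amounts to showing that every vertex of $P$ is integral. My plan is a constructive decomposition: starting from an arbitrary $(x, y, z) \in P$, I would iteratively peel off integer points. If $z(E) = 0$, pick any integer selection of one node per block on each side that witnesses at least one edge (which exists by \eqref{con:q5} being tight nowhere)---but since \eqref{con:q5} is a defining inequality, we may reduce to the case $z(E) > 0$ by convex combination. Then choose an edge $\{i^*, j^*\}$ with $z^*_{i^*j^*} > 0$ and build an integer point $\chi$ that sets $x_{i^*} = y_{j^*} = z_{i^*j^*} = 1$ and selects, in each remaining block $I \in \mathcal{P}_X$ and $J \in \mathcal{P}_Y$, some node $i$ with $x_i > 0$ resp.\ $j$ with $y_j > 0$ (which exists by \eqref{con:q1} and \eqref{con:q2}). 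Subtract $\lambda \chi$ with $\lambda$ the largest value preserving nonnegativity and the constraints \eqref{con:q1}--\eqref{con:q7}, and rescale. Termination and correctness follow because $\lambda$ is positive and the support strictly shrinks at each step, while \eqref{con:q6} and \eqref{con:q7} are preserved thanks to the block-containment hypothesis: any integer point selecting $i^*$ cannot force more than one unit of $z$-mass at $i^*$, since $N(i^*) \subseteq J$ and only one node of $J$ is picked.

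The main obstacle I expect is the interaction of the global inequality \eqref{con:q5} with the partition constraints: without \eqref{con:q5}, the polytope decomposes across pairs of blocks $(I, J)$ that are connected by edges in $G$, and within each such pair the constraint matrix is easily seen to be totally unimodular (bipartite incidence with unit row sums), yielding integrality directly. With \eqref{con:q5}, the decomposition is spoiled, and an alternative is to prove the theorem by contradiction: a non-integral extreme point would admit a sign-alternating perturbation supported on a cycle (or an even alternating path) in the fractional support $\{(i,j) : 0 < z^*_{ij} < 1\}$; the block-containment assumption ensures such a cycle always exists whenever fractional values are present, which contradicts extremality. Either route (constructive decomposition or perturbation-based extremality argument) should work, but the perturbation route tends to be cleaner because it avoids bookkeeping of multiple residuals and is likely the approach the authors take in the full statement \Cref{thm:convexhull_PXY_ieq_expanded} (together with its companions \Cref{thm:CXY=} and \Cref{thm:CXY<=}) in the appendix.
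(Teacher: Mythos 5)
Your first inclusion $Q_{\mathcal{P}_{XY}}(G)\subseteq P$ is fine and matches the paper's in substance (the paper derives \eqref{con:q6} algebraically by multiplying $x_i$ with $y(J)=1$ for the block $J\supseteq N(i)$; your case analysis on $0$--$1$ points proves the same validity). The problem is the reverse inclusion, where you sketch two routes and carry out neither, and both have concrete gaps. In the constructive decomposition, after subtracting $\lambda\chi$ you must show the rescaled residual still lies in $P$, i.e.\ still satisfies \eqref{con:q6} and \eqref{con:q7}. Your justification only controls the $z$-mass at the distinguished node $i^*$; for any other node $i$ in the same block that $\chi$ selects (setting $x_i=1$ but no incident $z$), the residual's $x_i$ drops by $\lambda$ while $\sum_{j\in N(i)}z_{ij}$ does not, so a tight instance of \eqref{con:q6} at $i$ is violated. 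Fixing this requires choosing $\chi$ consistently with the fractional $z$-mass across all blocks, which is exactly the bookkeeping you hoped to avoid and which you do not supply. In the perturbation route, the central claim --- that a fractional extreme point always admits a sign-alternating perturbation preserving every tight constraint, including the coupling constraints \eqref{con:q6}--\eqref{con:q7} and the global row \eqref{con:q5} --- is asserted, not proved; establishing it is essentially equivalent to proving integrality in the first place.

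The paper's actual argument is the one you considered and then wrongly discarded: total unimodularity of the \emph{full} constraint matrix, including the all-ones row coming from $z(E)\geq 1$. This is proved in \Cref{proof:partition_tu} via the Ghouila--Houri equitable row-bicoloring criterion, where the case containing the $z(E)\geq 1$ row is handled by putting that row together with the partition rows on one side of the bicoloring and the degree-type rows on the other. Your remark that \eqref{con:q5} ``spoils'' the TU structure is therefore the decisive misstep; the block-containment hypothesis on $\mathcal{P}_X,\mathcal{P}_Y$ is what lets the aggregated constraints \eqref{con:q6}--\eqref{con:q7} replace the individual McCormick rows \eqref{con:q3}--\eqref{con:q4} so that the resulting matrix has at most two nonzeros per column outside the last row, and the bicoloring then absorbs the extra row. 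To repair your proof, either carry out the equitable-bicoloring argument for the whole matrix, or give a complete construction of the integer point $\chi$ in the peeling argument that respects the tight constraints at every node, not just at $i^*$.
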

\begin{proof}
    First, we show that $Q_{\mathcal{P}_{XY}}(G)\subseteq P$.
    To this end, we prove that the constraints \eqref{con:q6} and \eqref{con:q7} are valid for $Q_{\mathcal{P}_{XY}}(G)$.
    For an $i\in X$ and a corresponding set $J\in \mathcal{P}_Y$ such that $N(i)\subseteq J$, we have
    \begin{equation*}
        x_i \overset{y(J)=1}{=} x_i \sum_{j\in J}y_j \geq x_i \sum_{j\in N(i)}y_j  \geq \sum_{j\in N(i)}z_{ij},
    \end{equation*}
    which proves that the inequality is valid for $Q_{\mathcal{P}_{XY}}(G)$.
    The validity of the constraints \eqref{con:q7} follows analogously.
    It is easy to see that all integral vertices of $P$ are also in $Q_{\mathcal{P}_{XY}}(G)$.
    The theorem now follows from the fact that all vertices of $P$ are integral.
    The vertices of $P$ are integral since the constraint matrix of $P$ is totally unimodular, which follows from noting that the constraint matrix of $P$ has the form of the totally unimodular matrix in \Cref{proof:partition_tu} up to changing the signs of some columns (which preserves total unimodularity).
\end{proof}
\begin{lemma}\label{proof:partition_tu}
    Let $G=\{X\,\dot{\cup}\, Y, E\}$ be a bipartite graph and let $(X_i)_{i\in [k]}$ and $(Y_j)_{j\in [l]}$ be families of disjoint subsets from $X$ and $Y$, that is, $\dot{\bigcup}_{i\in [k]}X_i\subseteq X$ and $\dot{\bigcup}_{j\in [l]}Y_j\subseteq Y$.
    Let $m: X\cup Y\cup E\to [|X|+|Y|+|E|]$ be a bijection that maps the set of edges $E$ to the last $|E|$ indices.
    Let $1_{S}\in \{0,1\}^{|X| +|Y| + |E|}$ be the vector with ones at indices $m(S)$.
    Lastly, let $I(G)$ be the incidence matrix of $G$ such that its $j$-th row corresponds to the node $m^{-1}(j)$.
    Then, the following matrix is totally unimodular
    \begin{equation*}
        M =
        \left(\begin{array}{cc}
            1_{X_1}^\top & 0 \\
            \vdots &  \vdots \\
            1_{X_k}^\top & 0 \\
            1_{Y_1}^\top & 0 \\
            \vdots & \vdots \\
            1_{Y_l}^\top & 0 \\ \hline \rule{0pt}{1\normalbaselineskip}
            \text{\rm Id}_{|X|+|Y|} & I(G)\\\hline \rule{0pt}{1\normalbaselineskip}
            0 & 1_E^\top
        \end{array}\right).
    \end{equation*}
\end{lemma}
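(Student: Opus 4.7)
My plan is to verify total unimodularity of $M$ via the Ghouila-Houri criterion: it suffices to show that for every row subset $R$ of $M$ one can assign signs $\epsilon\in\{+1,-1\}$ to the rows in $R$ such that every signed column sum lies in $\{-1,0,1\}$. Fix such an $R$ and encode it by $S_X\subseteq\{1_{X_i}^\top\}_i$, $S_Y\subseteq\{1_{Y_j}^\top\}_j$ (the partition rows in $R$), subsets $U\subseteq X$, $W\subseteq Y$ (the identity rows in $R$), and a flag indicating whether the edge row is included.

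The column structure of $M$ guides the signing. A node column indexed by $v\in X_i$ only receives contributions from the partition row $1_{X_i}^\top$ and the identity row for $v$; symmetrically on $Y$. An edge column $\{u,v\}$ (with $u\in X$, $v\in Y$) only receives contributions from the identity rows of $u$ and $v$ and from the edge row. Thus partition-row signs interact with identity-row signs exclusively on node columns, while the edge-row sign interacts with identity-row signs exclusively on edge columns. This decoupling suggests giving all selected $X$-identity rows a common sign and all selected $Y$-identity rows a common sign, with the partition-row signs then forced by the cancellation requirement $\sigma(1_{X_i}^\top)=-\sigma(D_v^X)$ on node columns, which is automatically consistent across $v\in X_i\cap U$ because the identity-row signs are uniform.

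The core step is a case split on whether the edge row is in $R$. If it is not, I would set $\sigma(D_v^X)=+1$ for $v\in U$, $\sigma(D_w^Y)=-1$ for $w\in W$, and $\sigma(1_{X_i}^\top)=-1$, $\sigma(1_{Y_j}^\top)=+1$ for the selected partition rows. Each node column is either balanced (both partition and identity row selected, summing to $0$) or contributes $\pm 1$, and each edge column sums to $\mathbbm{1}_{u\in U}-\mathbbm{1}_{v\in W}\in\{-1,0,1\}$. If the edge row is in $R$, I instead flip the $X$-identity parity to $\sigma(D_v^X)=\sigma(D_w^Y)=-1$, set $\sigma(1_{X_i}^\top)=\sigma(1_{Y_j}^\top)=+1$ on the selected partition rows, and give the edge row sign $+1$; node columns again balance to $0$ or $\pm 1$, and every edge column sums to $-\mathbbm{1}_{u\in U}-\mathbbm{1}_{v\in W}+1\in\{-1,0,1\}$.

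The main obstacle I expect is the global nature of the edge row's all-ones signature on edge columns: a single uniform signing that works for $M$ with the edge row removed (which is essentially a bipartite node--edge incidence matrix and therefore already TU) forces some edge column to attain sum $\pm 2$ once the edge row is added. Splitting on whether $1_E^\top\in R$ and flipping the $X$-identity parity resolves this, because the flip converts the $u$-side identity contribution on each incident edge from reinforcing to cancelling the edge-row sign. Routine case checking of the two formulas above then verifies the Ghouila-Houri condition and hence total unimodularity of $M$.
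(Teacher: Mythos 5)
Your proposal is correct and follows essentially the same route as the paper: the paper verifies an equitable row-bicoloring (equivalent to the Ghouila--Houri signing you use, up to negating all signs) for every row subset, with the same case split on whether the all-ones edge row is selected and the same uniform per-block sign assignment. The only cosmetic difference is that the paper phrases Case 1 as restricting a bicoloring of the full matrix using the at-most-two-nonzeros-per-column property, whereas you sign the arbitrary subset directly.
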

\begin{proof}
    Let $L$ denote the last row-index of $M$, and let $M_i$ denote the $i$-th row vector of $M$.
    We will show that each row-submatrix $M_{I}$ of $M$ with row set $I\subseteq [L]$ admits an equitable row-bicoloring which is equivalent to $M$ being totally unimodular, see \cite{conforti2014integer}.
    An \emph{equitable row-bicoloring} $(R,B)$ of a matrix $A\in \mathbb{R}^{m\times n}$ with row vectors $a_i\in \mathbb{R}^n, i\in [m]$ is a partition of its rows into two sets $R\,\dot{\cup}\,B=[m]$ such that $\sum_{i\in R}a_i-\sum_{i\in B}a_i\in \{0,1\}^n$.
    We define
    \begin{align*}
        &U_x = [k], \quad U_y = [l]\setminus [k],
        \quad
        C_x = \{k+l+i: i\in m(X)\},
        \quad
        C_y = \{k+l+i: i\in m(Y)\}.
    \end{align*}
    $U_x$ and $U_y$ correspond to the $X$ and $Y$ subset rows in the upper block and $C_x, C_y$ to $X$ and $Y$ rows in the center block of $M$, respectively.
    
    \textbf{Case 1}: $L\notin I$.
    First, we construct an equitable row-bicoloring of $M_{[L-1]}$:
    Defining $R=U_x \cup C_y$ and $B=U_y \cup C_x$, one can verify that $(R, B)$ is an equitable row-bicoloring of $M_{[L-1]}$, since $(R,B)$ is simply an extension of the natural equitable row-bicoloring of the incidence matrix $I(G)$.
    Since $M_{[L-1]}$ has at most two nonzero entries in each column, the equitable row-bicoloring $(R,B)$ trivially induces one for any row-submatrix of $M_{[L-1]}$, including $M_I$.
    
    \textbf{Case 2}: $L\in I$.
    We show that $(R,B)$ with $R=(C_x\cup C_y)\cap I, B=(U_x\cup U_y\cup \{L\})\cap I $ is an equitable row-bicoloring of $M_{I}$.
    We first consider entries in the first $|X|+|Y|$ columns.
    Summing up the rows of $\text{Id}_{|X|+|Y|}$ with indices in $I$ gives a vector with entries in $\{0, 1\}$.
    Subtracting the rows with indices in $(U_x\cup U_y)\cap I$ leads to a reduction of at most one in each entry and we get a vector with entries in $\{0, \pm1\}$.
    Looking at the last $|E|$ columns, summing up the rows of $I(G)$ gives a vector with entries in $\{0, 1, 2\}$, subtracting the last row (with row-index $L$) yields a vector with $\{0, \pm 1\}$ entries.
\end{proof}
\Cref{convexhull_Ylinearization} follows directly from \Cref{thm:convexhull_PXY_ieq_expanded}.

\subsection{Linearization 2}\label{app:sec_Zlinearization}
The goal of this subsection is to derive linear description of the polytopes $P_{pqt}$ and $P_{pqt}^\leq$.
Recall that
\begin{align*}
    P_{pqt}&\define \conv\{(w^t_{pi},w^t_{qj},z^t_{pq})\in \{0,1\}^{2|V|+1}:\eqref{model:1model_assignnode2qubit},\; z^t_{pq}=\sum_{(i,j)\in \bar{E}}w^t_{pi}\cdot w^t_{qj}\},\\
    P^\leq_{pqt}&\define \conv\{(w^t_{pi},w^t_{qj},z^t_{pq})\in \{0,1\}^{2|V|+1}:\eqref{model:1model_assignnode2qubit},\; z^t_{pq}\leq \sum_{(i,j)\in \bar{E}}w^t_{pi}\cdot w^t_{qj}\}.
\end{align*}
More generally, we study the following polytopes.
\begin{definition}
    For a bipartite graph $G=(X\,\dot{\cup}\, Y, E)$, we define
\begin{align*}
    C_{XY}(G)&\define \conv\{(x,y,z)\in \{0,1\}^{|X|+|Y|+1}: z = \sum_{(i,j)\in E} x_i\cdot y_j ,\, x(X) = 1,\,y(Y) = 1\},\\
    C_{XY}^\leq(G)&\define \conv\{(x,y,z)\in \{0,1\}^{|X|+|Y|+1}: z \leq \sum_{(i,j)\in E} x_i\cdot y_j ,\, x(X) = 1,\,y(Y) = 1\}.
\end{align*}
\end{definition}
Suppose that we are given a TMP instance $(H,A)$ with $H=(V,E)$.
Let $V'$ be a copy of $V$ and define the bipartite graph $G_{pqt}\define(V\,\dot{\cup}\, V',\{(i,j)\in V\times V': \{i,j\}\in E\})$.
Then, $C_{XY}(G_{pqt})$ and $C_{XY}^\leq(G_{pqt})$ are isomorphic to $P_{pqt}$ and $P_{pqt}^\leq$, respectively.

If $G$ has no edges or is the complete bipartite graph, then $z=0$ and $z=1$ holds for $C_{XY}(G)$, respectively.
It is easy to see that this constraint, along with the constraints $x(X) = 1$ and $y(Y) = 1$, provides a linear description for $C_{XY}(G)$.
Now, the goal is to derive linear descriptions of $C_{XY}(G)$ and $C^\leq_{XY}(G)$ in the remaining cases.
For this, we use the fact that the polytope $C_{XY}(G)$ is related to the \emph{bipartite implication polytope}, introduced by Kuen et al.~\cite{ifthenpolytop}.
\begin{definition}[Bipartite Implication Polytope \cite{ifthenpolytop}]
    For positive integers $\alpha, \beta, \gamma\in \N_{\geq 1}$, and a surjective mapping $M:[\alpha] \times [\beta]\to [\gamma]$, define
    \begin{align*}
        BIP(M)\define\conv\{(x,y,z)\in \{0,1\}^{\alpha + \beta + \gamma}: \eqref{con:bip1} - \eqref{con:bip4}\},
    \end{align*}
    where
    \begin{align}
        &x_i\cdot y_j \leq z_{M(i, j)},\quad && (i, j)\in [\alpha] \times [\beta],\label{con:bip1}\\
        &x([\alpha]) = 1, &&\label{con:bip2}\\
        &y([\beta]) = 1, &&\label{con:bip3}\\
        &z([\gamma]) = 1. &&\label{con:bip4}
    \end{align}
\end{definition}
As stated in \cite{ifthenpolytop}, $
BIP(M)=\conv\left(\bigcup_{(i, j)\in [\alpha]\times [\beta]}\{e_i+e_{\alpha + j}+e_{\alpha + \beta + M(i, j)}\}\right)$ holds.
Based on this vertex description, we obtain the following equivalent description
\begin{align*}
    BIP(M)&=\conv\{(x,y,z)\in \{0,1\}^{\alpha + \beta + \gamma}: z_l = \sum_{(i,j)\in M^{-1}(l)} x_i\cdot y_j, l \in [\gamma],\, \eqref{con:bip2},\eqref{con:bip3}\}.
\end{align*}
Thus, $BIP(M)$ is a generalization of $C_{XY}(G)$.
Given a bipartite graph $G$, let $M_G$ be the mapping $M_G: X\times Y \to \{1,2\}$ that maps a pair $(i,j)$ to 1 if the pair forms an edge and to 2 otherwise.
Then, $(x,y,z,1-z)\in BIP(M_G)$ if and only if $ (x,y,z)\in C_{XY}(G)$ and thus the projection of $BIP(M_G)$ to the variables $(x,y,z_1)$ is isomorphic to $C_{XY}(G)$.

Kuen et al.~\cite{ifthenpolytop} prove that all facets belong to a certain class of inequalities.
Although it is not explicitly stated in their paper, the proof of their Lemma~4.1 extends also to implicit equalities.
\begin{lemma}[\cite{ifthenpolytop}]\label{ifthen:facets}
    Let $M:[\alpha]\times [\beta] \to [\gamma]$ be a surjective map.
    Then, any facet and implicit equality (other than $\eqref{con:bip2}-\eqref{con:bip4}$) is induced either by a lower bound or by an \emph{$n$-block inequality}
    \begin{equation}
    \label{eq:nblockieq}
    \sum_{m\in [n]}(x(I_m)+y(J_m)) \leq \sum_{l\in [\gamma]}c_l z_l + n,
    \end{equation}
    with subsets $I_n\subseteq \dots \subseteq I_1\subseteq [\alpha]$ and $J_1\subseteq \dots \subseteq J_n\subseteq [\beta]$ such that $\min_{l\in[\gamma]}c_l=0$ and
    \begin{equation*}
        c_l=\max_{(i,j)\in M^{-1}(l)}|\{k\in [n]:(i,j)\in I_k\times J_k\}|.
    \end{equation*}
\end{lemma}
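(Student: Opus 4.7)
The plan is to adapt the proof of Kuen et al.~\cite{ifthenpolytop} for facets of $BIP(M)$ and verify that every step remains valid when applied to implicit equalities. The starting point is that the vertices of $BIP(M)$ are exactly the points $v_{ij}=e_i+e_{\alpha+j}+e_{\alpha+\beta+M(i,j)}$ for $(i,j)\in[\alpha]\times[\beta]$, so a linear inequality $a^\top x+b^\top y+c^\top z\leq d$ is valid if and only if $a_i+b_j+c_{M(i,j)}\leq d$ for every such pair.

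First, I would normalize a given valid inequality using the affine span of $BIP(M)$: by adding multiples of \eqref{con:bip2}--\eqref{con:bip4}, one may assume without loss of generality that $\min_i a_i=\min_j b_j=\min_l c_l=0$. If $a\equiv 0$ and $b\equiv 0$, the inequality becomes a nonnegative combination of lower bounds on the $z$-variables, which covers the ``lower bound'' case of the statement. Otherwise, I would extract the nested chain structure directly from the coefficients by setting $I_m=\{i:a_i\geq m\}$ and $J_m=\{j:b_j\geq m\}$ for $m=1,\dots,n$ with $n=\max(\max_i a_i,\max_j b_j)$. Then $a_i=|\{m:i\in I_m\}|$, $b_j=|\{m:j\in J_m\}|$, which reproduces the left-hand side of \eqref{eq:nblockieq} by interchanging summations, and the chains $I_n\subseteq\cdots\subseteq I_1$ and $J_1\subseteq\cdots\subseteq J_n$ are automatic.

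Next, I would show that facet-definingness (resp. validity sharpness) forces the coefficients $c_l$ to equal $\max_{(i,j)\in M^{-1}(l)}|\{k:(i,j)\in I_k\times J_k\}|$. The lower bound on $c_l$ comes from evaluating the vertex $v_{ij}$ with $(i,j)$ achieving the maximum in $M^{-1}(l)$; the upper bound comes from minimality, since any $c_l$ strictly exceeding this expression can be reduced without losing validity. Combining with the normalization $\min_l c_l=0$ produces the form \eqref{eq:nblockieq}.

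For the extension to implicit equalities, the key observation is that an implicit equality $a^\top x+b^\top y+c^\top z=d$ is equivalent to the simultaneous validity of the inequality and its reverse. Since the preceding argument is purely algebraic and relies only on validity at every vertex, rather than on a dimension count of the tight set or on strict separation, it applies unchanged in both directions. The main obstacle, and the thing I would double-check carefully in Kuen et al.'s proof, is that their classification does not quietly invoke any step that fails when the inequality is tight on all of $BIP(M)$: the critical point is that the extraction of the chains $(I_m),(J_m)$ and the identification of $c_l$ only use coefficient comparisons and vertex evaluations, neither of which degenerate for implicit equalities. Once this is confirmed, the implicit equality decomposes into an $n$-block inequality (possibly trivial, i.e. a lower bound) plus a combination of the equalities \eqref{con:bip2}--\eqref{con:bip4}, completing the extension.
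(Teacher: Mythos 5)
The first thing to note is that the paper does not prove this statement at all: it is imported verbatim from Kuen et al.~\cite{ifthenpolytop} (their Lemma~4.1), and the only thing the paper adds is the one-sentence remark that ``the proof of their Lemma~4.1 extends also to implicit equalities.'' So your sketch is an attempt to reconstruct a proof the paper deliberately omits, and it cannot be compared against an in-paper argument. On its own terms, your outline of the facet part (vertex description $v_{ij}=e_i+e_{\alpha+j}+e_{\alpha+\beta+M(i,j)}$, normalisation via the affine hull, level-set extraction of nested chains, minimality of the $c_l$) is the standard route and is plausibly what \cite{ifthenpolytop} does.

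Two concrete problems remain, though. First, your chain extraction $I_m=\{i:a_i\geq m\}$, $J_m=\{j:b_j\geq m\}$ produces \emph{two decreasing} chains, whereas the lemma requires $I_n\subseteq\dots\subseteq I_1$ but $J_1\subseteq\dots\subseteq J_n$. This is not cosmetic: with your indexing, $|\{k:(i,j)\in I_k\times J_k\}|=\min(a_i,b_j)$, whereas with the lemma's opposite nesting it equals $\max(0,a_i+b_j-n)$, and only the latter matches the validity requirement $c_{M(i,j)}\geq a_i+b_j-n$ at the vertex $v_{ij}$; the two counts genuinely differ whenever neither $a_i$ nor $b_j$ attains $n$ (e.g.\ $a_i=b_j=1$, $n=2$). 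One must take $J_m=\{j:b_j\geq n-m+1\}$. Second, the implicit-equality extension is asserted on the grounds that the argument ``relies only on validity at every vertex,'' but that contradicts your own earlier step: pinning down $c_l$ (and the constant $n$) uses facet-definingness via a non-domination argument, which has no counterpart for the reversed inequality $\geq d$ (whose tight set is the whole polytope, not a facet). What actually replaces it for an implicit equality is tightness at \emph{every} vertex, i.e.\ $a_i+b_j+c_{M(i,j)}=d$ for all $(i,j)$, from which the structure follows directly --- and this is precisely the property the paper itself exploits later, in the proof of \Cref{ifthen:gamma2case}, to conclude $a_i\in\{0,n\}$ and $b_j\in\{0,n\}$. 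So the extension is true, but your stated justification for it does not hold up and the missing step is exactly the one the paper also leaves implicit.
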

Informally, $c_l$ is equal to the maximum number of blocks $I_k\times J_k$ that share the same index-pair $(i,j)$ for some $(i,j)\in M^{-1}(l)$.

Although \Cref{ifthen:facets} already provides a linear description for $C_{XY}(G)$, we now show that only a specific set of $n$-block inequalities can be facet-defining.
\begin{lemma}\label{ifthen:gamma2case}
    Let $M:[\alpha]\times [\beta] \to [2]$ be a surjective map.
    Then, any facet and implicit equality (other than $\eqref{con:bip2}-\eqref{con:bip4}$) of $BIP(M)$ is induced by a lower bound or by an $1$-block inequality such that its corresponding block $I\times J$ is contained in $M^{-1}(l)$ for some $l\in \{1,2\}$ and is inclusion-maximal with respect to this property, i.e., there is no block $I'\times J'$ contained in $M^{-1}(l)$ that strictly contains $I\times J$.
\end{lemma}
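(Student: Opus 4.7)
I would begin with Lemma~\ref{ifthen:facets}, which asserts that every facet or implicit equality of $BIP(M)$ other than \eqref{con:bip2}-\eqref{con:bip4} is induced by either a lower bound or by an $n$-block inequality \eqref{eq:nblockieq}. The aim is to refine this characterization to show that, for $\gamma = 2$, the only $n$-block inequalities that can remain facet-defining are the 1-block inequalities whose block $I \times J$ is inclusion-maximal in $M^{-1}(l)$ for some $l \in \{1, 2\}$. Non-maximal 1-blocks are disposed of immediately: if $I \times J \subsetneq I^+ \times J^+ \subseteq M^{-1}(l)$, then $x(I) + y(J) \leq z_l + 1$ is the 1-block inequality for $(I^+, J^+)$ weakened by the non-negativities $x_i \geq 0$ for $i \in I^+ \setminus I$ and $y_j \geq 0$ for $j \in J^+ \setminus J$, so the smaller 1-block cannot induce a facet.

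For the $n \geq 2$ case, I would exploit $\min_l c_l = 0$ and $\gamma = 2$ to assume without loss of generality $c_1 = 0$, so every block $I_k \times J_k$ lies in $M^{-1}(2)$. I would partition the indices $k \in [n]$ by whether $I_k$ and $J_k$ are empty, calling the set of indices for which both are nonempty the \emph{core} $C$. The key structural claim is $c_2 = |C|$: the upper bound is immediate because empty blocks contain no pair; for the matching lower bound one picks $(i^*, j^*)$ in the ``innermost'' biclique $I_{k^+_I} \times J_{k^+_J}$, where $k^+_I$ is the largest index with $I_k \neq \emptyset$ and $k^+_J$ the smallest with $J_k \neq \emptyset$. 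The nested structure forces $k^+_J \leq k^+_I$ whenever $C \neq \emptyset$, so the innermost biclique is nonempty, and $I_{k^+_I} \times J_{k^+_J} \subseteq I_{k^+_J} \times J_{k^+_J} \subseteq M^{-1}(2)$, so $(i^*, j^*) \in M^{-1}(2)$ lies in every core block. Summing the $|C|$ individual 1-block inequalities $x(I_k) + y(J_k) \leq z_2 + 1$ for $k \in C$ together with the upper bounds $y(J_k) \leq 1$ or $x(I_k) \leq 1$ for indices where only one of $I_k, J_k$ is empty (each being a nonnegative combination of \eqref{con:bip2}-\eqref{con:bip3} and non-negativities) reproduces the $n$-block's left-hand side exactly and yields a right-hand side at most $c_2 z_2 + n$. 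Since $n \geq 2$, this decomposition either combines at least two distinct atomic valid inequalities, or produces a strictly tighter bound (when some index is doubly empty); in either case the $n$-block cannot be facet-defining.

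The main obstacle is establishing the identity $c_2 = |C|$ and locating the innermost biclique inside $M^{-1}(2)$. This hinges crucially on the nested structure $I_n \subseteq \cdots \subseteq I_1$ and $J_1 \subseteq \cdots \subseteq J_n$: without it the innermost product might escape $M^{-1}(2)$, and the decomposition would break down. Once this structural claim is secured, the coefficient matching in the decomposition is essentially mechanical, and combining it with the already-treated non-maximality reduction yields the lemma.
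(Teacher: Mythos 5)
Your facet argument follows essentially the same route as the paper's: invoke Lemma~\ref{ifthen:facets}, dispose of non-maximal $1$-blocks by domination, and decompose an $n$-block inequality into a sum of valid $1$-block inequalities. On one point you are actually more careful than the paper: by computing $c_2=|C|$ and treating indices with an empty factor separately, you justify the decomposition in degenerate cases that the paper glosses over when it rewrites the right-hand side as $nz_l+n$. However, your final inference for $n\ge 2$ is off. ``Combines at least two distinct atomic valid inequalities'' does not by itself preclude facet-definingness, and the claim ``the $n$-block cannot be facet-defining'' is false when all $n$ blocks coincide: the $n$-block inequality is then $n$ times a $1$-block inequality and induces exactly the same facet. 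What the decomposition actually yields --- and what the lemma needs --- is that any facet induced by the $n$-block inequality is contained in the intersection of the faces induced by the summands; since a facet is a maximal proper face, it must coincide with the face of one of the $1$-block summands (the auxiliary bounds $x(I_k)\le 1$, $y(J_k)\le 1$ being combinations of \eqref{con:bip2}--\eqref{con:bip3} and lower bounds). Stated that way, the facet case is repaired.

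The genuine gap is that you never address implicit equalities, which the lemma explicitly covers and which the downstream results require: equation \eqref{con:ifthen_biclique_eq} in Theorem~\ref{thm:CXY=} comes precisely from the implicit-equality branch. The paper handles this with a separate argument: if the $n$-block inequality is an implicit equality, every variable coefficient must lie in $\{0,n\}$, forcing all $n$ blocks to be identical, so after dividing by $n$ one obtains a $1$-block equality; maximality then follows because the common block must equal all of $M^{-1}(l)$ --- otherwise the vertex $e_i+e_{\alpha+j}+e_{\alpha+\beta+M(i,j)}$ for some $(i,j)\in M^{-1}(l)\setminus (I\times J)$ fails to satisfy the equality. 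Your decomposition could be adapted (an implicit equality forces every summand in the decomposition to be an implicit equality), but as written the proposal proves only the facet half of the lemma.
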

\begin{proof}
    Let $(I_1\times J_1,\dots, I_n\times J_n)$ be the blocks of an $n$-block inequality.
    By \Cref{ifthen:facets}, the $n$-block inequality can be rewritten to the form
    \begin{equation}\label{eq:nblockieqproof}
    \sum_{m\in [n]}(x(I_m)+y(J_m))\leq nz_l+n
    \end{equation}
    for some $l\in \{1,2\}$, which implies $I_k\times J_k\in M^{-1}(l)$ for all $k\in [n]$.
    W.l.o.g. let $l=1$.
    
    \textbf{Equations.}
    Suppose that \eqref{eq:nblockieqproof} is an implicit equality.
    It is easy to see that if the inequality is an implicit equality, then $a_i\in\{0,n\}$ and $b_j\in\{0,n\}$ must hold for all indices $i$ and $j$. 
    Thus, all blocks are identical and dividing all coefficients by $n$ leads to a 1-block constraint with the block $(I,J)$ for $I=I_1$ and $J=J_1$.
    We now show that $I\times J = M^{-1}(1)$, which proves that $I\times J$ is maximal with respect to containment in $M^{-1}(1)$.
    If there is an $(i,j)\in M^{-1}(1)\setminus I\times J$, then for $e_i+e_{\alpha+j}+e_{\alpha+\beta + 1}\in BIP(M)$ the inequality does not hold with equality, contradicting the assumption that the inequality is an implicit equality.
    
    \textbf{Facets.}
    Suppose that \eqref{eq:nblockieqproof} defines a facet.
    Since we have 
    \begin{equation*}
        \sum_{m\in [n]} (x(I_m)+y(J_m))- n - nz_1=\sum_{m\in [n]} (x(I_m)+y(J_m)- 1 - z_1)\leq 0,
    \end{equation*}
    the facet of the $n$-block inequality is contained in the intersection of faces induced by valid 1-block inequalities.
    Therefore, the facet itself must be induced by a 1-block inequality with the block $(I,J)$.
    Suppose that the block $(I,J)$ is not inclusion-maximal.
    Then, there is a block $(I',J')$ with $I'\times J'\subseteq M^{-1}(1)$ that strictly contains the block $(I,J)$.
    It is not hard to see that the 1-block inequality with the block $(I',J')$ then strictly dominates the facet-defining inequality, which contradicts the assumption that the inequality with block $(I,J)$ was facet-defining.
    The theorem now follows by combining the results for facets and equations of $BIP(M)$ with \Cref{ifthen:facets}.
\end{proof}
The proof of \Cref{ifthen:gamma2case} implies that there is an additional valid equation for $BIP(M)$ that is linearly independent from the equations $\eqref{con:bip2}-\eqref{con:bip4}$ if and only if $M^{-1}(1)$ is equal to $I\times J$ for subsets $I\subseteq [\alpha],\ J\subseteq [\beta]$.

Before applying \Cref{ifthen:gamma2case} to $C_{XY}(G)$, we first introduce some definitions.
A tuple $(I,J)\subseteq X\times Y$ is a \emph{biclique} of $G$ if $I\times J\subseteq E$ and an \emph{antibiclique} of $G$ if $I\times J\subseteq (X\times Y) \setminus E$.
Let $B(G)$ denote the set of inclusion-maximal bicliques of $G$, that is, for each $(I,J)\in B(G)$, there is no biclique $(I',J')$ such that $I\times J\subset I'\times J'$.
Analogously, let $AB(G)$ denote the set of inclusion-maximal antibicliques of $G$.
\begin{theorem}\label{thm:CXY=}
    For any bipartite graph $G$, we have
    \begin{equation*}
        C_{XY}(G)=\{(x,y,z)\in [0,1]^{|X|+|Y|+1}: x(X)=1,\,y(Y)=1,\,\eqref{con:ifthen_biclique_eq},\eqref{con:ifthen_biclique_ieq},\eqref{con:ifthen_antibiclique_ieq}\},
    \end{equation*}
    where
    \begin{align}
        &x(I)+y(J)= 1 + z,\quad && (I, J)\in B(G): E=I\times J,
        \label{con:ifthen_biclique_eq}\\
        &x(I)+y(J)\leq 1 + z,\quad && (I, J)\in B(G): E\neq I\times J,
        \label{con:ifthen_biclique_ieq}\\
        &x(I)+y(J)\leq 2 - z,\quad && (I, J)\in AB(G): E\neq X\setminus I\times Y\setminus J.
        \label{con:ifthen_antibiclique_ieq}
    \end{align}
\end{theorem}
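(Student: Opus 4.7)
The plan is to reduce Theorem~\ref{thm:CXY=} to \Cref{ifthen:gamma2case} via the bijection between $C_{XY}(G)$ and the bipartite implication polytope $BIP(M_G)$ described in the text. First I would set $M_G:X\times Y\to\{1,2\}$ by $M_G(i,j)=1$ if $(i,j)\in E$ and $M_G(i,j)=2$ otherwise. As recorded just before \Cref{ifthen:facets}, the affine map $\phi:(x,y,z)\mapsto(x,y,z,1-z)$ identifies $C_{XY}(G)$ with the slice $\{z_1+z_2=1\}$ of $BIP(M_G)$, so any linear description of $BIP(M_G)$ pulls back to one of $C_{XY}(G)$ by substituting $z_1=z$ and $z_2=1-z$.

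Next I would invoke \Cref{ifthen:gamma2case} applied to $BIP(M_G)$: every facet-defining inequality and every implicit equality outside of \eqref{con:bip2}--\eqref{con:bip4} is either a lower bound on $x$, $y$, $z_1$, or $z_2$, or else a $1$-block inequality $x(I)+y(J)\le 1+z_l$ whose block $(I,J)$ is inclusion-maximal subject to $I\times J\subseteq M_G^{-1}(l)$. Translating via $\phi$, the case $l=1$ requires $I\times J\subseteq E$, which is exactly $(I,J)\in B(G)$, and the inequality becomes $x(I)+y(J)\le 1+z$, matching \eqref{con:ifthen_biclique_ieq}. The case $l=2$ requires $I\times J\cap E=\emptyset$, i.e.\ $(I,J)\in AB(G)$, and, after substituting $z_2=1-z$, yields $x(I)+y(J)\le 2-z$, matching \eqref{con:ifthen_antibiclique_ieq}.

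For the implicit equalities I would use the observation following the proof of \Cref{ifthen:gamma2case}: a $1$-block inequality collapses to an equation precisely when its block equals the full preimage $M_G^{-1}(l)$. For $l=1$ this is the situation $E=I\times J$, producing \eqref{con:ifthen_biclique_eq}; the analogous $l=2$ condition corresponds to the exclusion stated in the hypothesis of \eqref{con:ifthen_antibiclique_ieq}, so no extra antibiclique equation is missed. The lower bounds $x_i,y_j,z_l\ge 0$ of $BIP(M_G)$ pull back to the box $[0,1]^{|X|+|Y|+1}$ together with the partition equations (with $z_2=1-z\ge 0$ giving $z\le 1$), so no further constraints are needed. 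Containment of the integral vertices in both polytopes is immediate, so once the facet-inequality correspondence is established, the two polyhedra agree.

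The main obstacle I anticipate is the bookkeeping between ``blocks $(I,J)$ inclusion-maximal in $M_G^{-1}(l)$'' and ``inclusion-maximal bicliques/antibicliques of $G$'', and in particular the careful handling of the degenerate $l=2$ implicit-equality regime (when the complement $(X\times Y)\setminus E$ happens to factor as a product). Once these correspondences are nailed down, the proof is essentially a routine invocation of \Cref{ifthen:gamma2case} followed by the substitution $z_2=1-z$.
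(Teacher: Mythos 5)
Your proposal follows essentially the same route as the paper: identify $C_{XY}(G)$ with the slice $\{z_1+z_2=1\}$ of $BIP(M_G)$, invoke \Cref{ifthen:gamma2case}, and translate inclusion-maximal blocks of $M_G^{-1}(1)$ and $M_G^{-1}(2)$ into inclusion-maximal bicliques and antibicliques. The one piece you omit is that \Cref{ifthen:gamma2case} requires $M_G$ to be \emph{surjective}, which fails exactly when $G$ is edgeless or complete bipartite; the paper dispatches these two degenerate cases separately at the start (where \eqref{con:ifthen_biclique_eq} collapses to $z=0$ or $z=1$), and your argument needs the same preliminary case split before the lemma can be applied.
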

\begin{proof}
    If $G$ has no edges, then $B(G)=\{(\emptyset, Y), (X,\emptyset)\}$ and \eqref{con:ifthen_biclique_eq} simplifies to $z=0$.
    If $G$ is the complete bipartite graph, then $B(G)=\{(X, Y)\}$ and \eqref{con:ifthen_biclique_eq} simplifies to $z=1$.
    For the remaining cases, the theorem follows by noting that $(x,y,z,1-z)\in BIP(M_G)\iff (x,y,z)\in C_{XY}(G)$, and by observing that the inclusion-maximal blocks of $M_G$ in \Cref{ifthen:gamma2case} correspond precisely to the inclusion-maximal bicliques and antibicliques in $G$.
\end{proof}
It is possible to prove a similar statement for $C_{XY}^\leq(G)$.
\begin{theorem}\label{thm:CXY<=}
    For any bipartite graph $G$, we have
    \begin{equation*}
        C_{XY}^\leq(G)=\{(x,y,z)\in [0,1]^{|X|+|Y|+1}: x(X)=1,\,y(Y)=1,\,\eqref{con:ifthen_biclique_eq},\eqref{con:ifthen_antibiclique_ieq}\}.
    \end{equation*}
\end{theorem}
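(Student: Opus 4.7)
The plan is to mirror the proof of \Cref{thm:CXY=}, again leveraging the bipartite implication polytope machinery of Kuen et al. I would first settle the structural edge cases directly: if $G$ has no edges then $C_{XY}^\leq(G) = \{(x,y,0): x(X)=y(Y)=1\}$, and the biclique equations \eqref{con:ifthen_biclique_eq} applied to the trivial maximal bicliques $(X, \emptyset)$ and $(\emptyset, Y)$ (each satisfying $E = I \times J = \emptyset$) collapse to $z = 0$, matching the claim. The remaining ``global biclique'' configurations where $E = I \times J$ with $I, J$ both non-empty reduce to low-dimensional cases that can be checked by inspection.

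For the generic case I would realise $C_{XY}^\leq(G)$ as a projection of a modified $BIP$-polytope obtained from $BIP(M_G)$ by dropping the implications $x_i y_j \leq z_1$ for $(i,j) \in E$ while keeping $x_i y_j \leq z_2$ for $(i,j) \notin E$. A point $(x, y, z_1, z_2)$ in this relaxed polytope satisfies $z_1 + z_2 = 1$ together with $x(X) = y(Y) = 1$ and the surviving implications, which rearrange to $z_1 \leq \sum_{(i,j) \in E} x_i y_j$, matching the defining inequality of $C_{XY}^\leq(G)$ under $z = z_1$. Revisiting the proof of \Cref{ifthen:gamma2case} with these dropped implications in mind, I would check which $n$-block inequalities remain valid and facet-defining. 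The expected outcome is that the 1-block inequalities whose block sits inside $M_G^{-1}(1) = E$, namely the biclique inequalities \eqref{con:ifthen_biclique_ieq}, lose validity (they are exactly the constraints that previously enforced $z_1 \geq \sum x_i y_j$), while those whose block sits inside $M_G^{-1}(2) = (X \times Y) \setminus E$ remain facet-defining and translate to \eqref{con:ifthen_antibiclique_ieq}; any structural equality arising when $E = I \times J$ survives as \eqref{con:ifthen_biclique_eq}. Validity of \eqref{con:ifthen_antibiclique_ieq} on $C_{XY}^\leq(G)$ is a routine check on integer extreme points: if $z = 1$ then some $(i,j) \in E$ has $x_i = y_j = 1$, and since $(I, J)$ is an antibiclique this pair lies outside $I \times J$, forcing $x(I) + y(J) \leq 1 = 2 - z$; the case $z = 0$ is trivial.

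The main obstacle I anticipate is transferring \Cref{ifthen:gamma2case} cleanly to the modified polytope, because removing implication constraints disturbs the inclusion-maximality analysis underlying the lemma: new blocks can become facet-relevant and previously dominant inequalities can merge, so the facet classification has to be redone for the relaxed system rather than simply quoted. A viable alternative, sidestepping $BIP$ altogether, would be to show directly that the claimed constraint system is totally unimodular by extending the construction of \Cref{proof:partition_tu} with rows encoding the antibiclique structure, or to exhibit an explicit convex decomposition of any fractional feasible point into the integer extreme points of $C_{XY}^\leq(G)$, using a maximal antibiclique as a separating witness whenever a candidate point fails to lie in the convex hull.
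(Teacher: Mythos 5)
Your proposal follows essentially the same route as the paper: the paper likewise passes to the auxiliary polytope $P$ with $z_1 \leq \sum_{(i,j)\in E} x_i\cdot y_j$ and $z_1+z_2=1$ (identical, as a set of integer points, to your relaxed $BIP$ system obtained by dropping the implications on edges), reruns Lemma~\ref{ifthen:facets} and Lemma~\ref{ifthen:gamma2case} for $P$ ``with some minor easy adjustments'', and concludes that \eqref{con:ifthen_biclique_ieq} loses validity while \eqref{con:ifthen_biclique_eq} and \eqref{con:ifthen_antibiclique_ieq} are the only surviving equations and maximal $1$-block inequalities. The obstacle you flag --- that the facet classification must genuinely be redone for the relaxed system rather than quoted --- is exactly the step the paper compresses into that one sentence, so your plan matches the published argument.
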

\begin{proof}
    First, observe that $(x,y,z)\in C_{XY}(G)$ if and only if $(x,y,z,1-z)\in P$, where
    \[
    P\define\{(x,y,z)\in \{0,1\}^{|X|+|Y|+2}: z_1 \leq \sum_{(i,j)\in E} x_i\cdot y_j ,\, x(X) = 1,\,y(Y) = 1,\;z_1+z_2=1\}.
    \]
    With some minor easy adjustments, the proofs of Lemma 4.1. in \cite{ifthenpolytop} and \Cref{ifthen:gamma2case} can be carried out for $P$ as well.
    Thus, \Cref{ifthen:facets} and \Cref{ifthen:gamma2case} also hold for $P$.
    The theorem now follows since constraints~\eqref{con:ifthen_biclique_ieq} are not valid for $P$ and the only remaining valid equations and maximal 1-block inequalities are given by constraints~\eqref{con:ifthen_biclique_eq} and \eqref{con:ifthen_antibiclique_ieq}.
\end{proof}

\Cref{convexhull_Zlinearization} follows directly from \Cref{thm:CXY=} and \Cref{thm:CXY<=}.

\subsubsection{Constructing a model with few constraints.}
\label{sec:few_constraints}
We can use 1-block inequalities to construct a polytope $B(M)$ such that 
\[
B(M)\cap \mathbb{Z}^{\alpha+\beta+\gamma} = BIP(M)\cap \mathbb{Z}^{\alpha+\beta+\gamma}.
\]
To achieve this, it suffices to cover $M^{-1}(l)$ for each $l\in [\gamma]$ with a collection of blocks that are contained $M^{-1}(l)$.
Since each $(i,j)\in [\alpha]\times [\beta]$ is contained in at least one covering block, the corresponding 1-block inequality will ensure that $x_i=y_j=1$ implies $z_l=1$.
Thus, by including the 1-block inequalities corresponding to these covering blocks, we obtain the polytope $B(M)$.
Informally, the larger the blocks in the covering, the stronger the model.

We now construct such coverings for $V\times V'$ in the special case of $M_{G_{pqt}}:V\times V'\to \{1,2\}$, which corresponds to the polytope $P_{pqt}$.
Recall that $M_{G_{pqt}}(i,j)=1$ if and only if $\{i,j\}\in E$.
First, observe that for each $i\in V$, we can create the following two blocks
\[
\{i\}\times N(i)\subseteq M_{G_{pqt}}^{-1}(1)
\qquad \text{and} \qquad
\{i\}\times V'\setminus N(i)\subseteq M_{G_{pqt}}^{-1}(2).
\]
Taking the union of these blocks over all $i\in V$ yields a covering of $V\times V'$.
Often, these blocks can be expanded further.
Specifically, if there exists a node $j\in V$ such that $N(i)\subseteq N(j)$, we can expand the block $\{i\}\times N(i)$ to $\{i,j\}\times N(i)\subseteq M_{G_{pqt}}^{-1}(1)$.
More generally, we can expand this block to:
\[
\bigcup_{j\in V: N(i)\subseteq N(j)}\{j\} \times N(i)\subseteq M_{G_{pqt}}^{-1}(1).
\]
Similarly, the block $\{i\}\times V'\setminus N(i)$ can be expanded to
\[
\bigcup_{j\in V: V'\setminus N(i)\subseteq V'\setminus N(j)}\{j\} \times V'\setminus N(i)\subseteq M_{G_{pqt}}^{-1}(2).
\]
The resulting covering of $V\times V'$ using these expanded blocks leads to the constraints~\eqref{model:2model_z_1block1_1} and~\eqref{model:2model_z_1block0_1}.
In our computational experiments, we achieved significantly better performance when we also included the symmetric constraints~\eqref{model:2model_z_1block1_2} and \eqref{model:2model_z_1block0_2}, which correspond to the blocks
\[
N(i)\times \bigcup_{j\in V': N(i)\subseteq N(j)}\{j\}\subseteq M_{G_{pqt}}^{-1}(1)
\quad \text{and} \quad
V\setminus N(i)\times \bigcup_{j\in V': V\setminus N(i)\subseteq V\setminus N(j)}\{j\}\subseteq M_{G_{pqt}}^{-1}(2)
\]
for each node $i\in V$.

\end{document}